\newcommand{\N}{\mbox{$\mathbb{N}$}}
\newcommand{\Z}{\mbox{$\mathbb{Z}$}}
\newcommand{\R}{\mbox{$\mathbb{R}$}}
\newcommand{\GL}{\mathrm{GL}}
\newcommand{\SL}{\mathrm{SL}}
 \def\NN{{\mathbb N}}  \def\PP{{\mathbb P}}
 \def\RR{{\mathbb R}}  
 \def\ZZ{{\mathbb Z}}
  \def\cG{\mathcal{G}}  
\def\cC{\mathcal{C}}    \def\cU{\mathcal{U}}
   \def\cP{\mathcal{P}} 
   \def\cQ{\mathcal{Q}}
    \def\sx{\mathsf{x}}
\def\sy{\mathsf{y}}  \def\sz{\mathsf{z}}
\newcommand{\aaa}{\mathfrak{a}}
\newcommand{\semig}{\Lambda}
\newcommand{\ie}{i.e.\ }
\newcommand{\eg}{e.g.\ }
\newcommand{\resp}{resp.\ }
\newtheorem{teo}{Theorem}[section]
\newtheorem{thm}[teo]{Theorem}
\newtheorem{quest}[teo]{Question}
\newtheorem{cor}[teo]{Corollary}
\newtheorem{lema}[teo]{Lemma}
\newtheorem{prop}[teo]{Proposition}
\newtheorem{fact}[teo]{Fact}
\theoremstyle{definition}
\newtheorem{defi}[teo]{Definition}
\newtheorem{ex}[teo]{Example}
\theoremstyle{remark}
\newtheorem{remark}[teo]{Remark}
\newtheorem{remarks}[teo]{Remarks}
\numberwithin{equation}{section}
\DeclareMathOperator{\Hom}{Hom}
\author[F. Kassel]{Fanny Kassel}
\address{CNRS and Laboratoire Alexander Grothendieck, Institut des Hautes \'Etudes Scientifiques, Universit\'e Paris-Saclay, 35 route de Chartres, 91440 Bures-sur-Yvette, France}
\email{kassel@ihes.fr}
\author[R. Potrie]{Rafael Potrie}
\address{CMAT, Facultad de Ciencias, Universidad de la Rep\'ublica, Igu\'a 4225, Montevideo, 11400, Uruguay}
\urladdr{www.cmat.edu.uy/$\sim$rpotrie}
\email{rpotrie@cmat.edu.uy}
\title{Eigenvalue gaps for hyperbolic groups and semigroups}
\thanks{This project received funding from the European Research Council (ERC) under the European Union's Horizon 2020 research and innovation programme (ERC starting grant DiGGeS, grant agreement No.\ 715982).
F.K.\ was partially supported by the Louis D. foundation.
R.P.\ was partially supported by CSIC-618, FCE-135352, FCE-148740 and MathAmSud. This work was completed while R.P. was a von Neumann fellow at IAS, funded by Minerva Research Fundation Membership Fund and NSF DMS-1638352.}
\begin{document}

\maketitle

\begin{abstract}
Given a locally constant linear cocycle over a subshift of finite type, we show that the existence of a uniform gap between the $i$-th and $(i+1)$-th Lyapunov exponents for all invariant measures implies the existence of a dominated splitting of index~$i$.
We establish a similar result for sofic subshifts coming from word hyperbolic groups, in relation with Anosov representations of such groups.
We discuss the case of finitely generated semigroups, and propose a notion of Anosov representation in this setting.
\end{abstract}
  
\medskip

\section{Introduction}

The aim of this paper is to illustrate the idea that certain approaches coming from discrete subgroups of Lie groups may be useful to study linear cocycles in dynamics, while conversely the point of view of cocycles may be interesting to understand discrete subgroups and subsemigroups of Lie groups.
For this we focus on three related but somewhat independent topics, whose common feature is to involve eigenvalue gaps for sequences of matrices.

\subsection{Dominated splittings for locally constant cocycles and Lyapunov exponents} \label{subsec:intro-finite-type}

The first topic of the paper is dominated splittings.
Using ideas from the theory of discrete subgroups of Lie groups, we obtain the following characterization in terms of the Lyapunov exponents of invariant measures (see Section~\ref{sec:prelim} for definitions).

\begin{thm} \label{thm:charact-domin-splitting}
Let $X$ be a subshift of finite type and $(\sigma,\Phi)$ a locally constant cocycle over~$X$, where $\sigma : X\to X$ is the shift and $\Phi : X \to \GL(d,\RR)$ for some $d\in\NN^*$.
For $1\leq i\leq d-1$, the cocycle $(\sigma,\Phi)$ has a dominated splitting of index~$i$ if and only if there exists $c>0$ such that for every ergodic $\sigma$-invariant measure $\nu$ on~$X$ the difference between the $i$-th and~$(i+1)$-th Lyapunov exponents of~$\nu$ is $\geq c$.
\end{thm}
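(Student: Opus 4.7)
The $(\Rightarrow)$ direction is immediate: applying Oseledets' theorem to the restrictions of the cocycle to the two invariant subbundles of a dominated splitting of index $i$ transfers the uniform exponential gap between $\sigma_i(\Phi^n(x))$ and $\sigma_{i+1}(\Phi^n(x))$ (valid along every orbit) into a uniform gap between the $i$-th and $(i+1)$-th Lyapunov exponents of every $\sigma$-invariant ergodic measure, with the same exponential rate.

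For the converse, my plan is to pass through periodic orbits. Each periodic orbit associated to an admissible loop word $w$ of length $n$ supports an ergodic invariant measure whose Lyapunov spectrum is $\{\tfrac{1}{n}\log|\lambda_j(\Phi_w)|\}_{j=1}^d$, where the $\lambda_j(\Phi_w)$ are the eigenvalue moduli of the product matrix $\Phi_w$ around the loop (this uses local constancy in a crucial way). The hypothesis therefore specializes to a uniform Jordan-type gap
$$ \log\frac{|\lambda_i(\Phi_w)|}{|\lambda_{i+1}(\Phi_w)|} \;\geq\; c\,|w| $$
for every admissible loop $w$. I would then upgrade this to a uniform singular-value gap
$$ \log\frac{\sigma_i(\Phi_w)}{\sigma_{i+1}(\Phi_w)} \;\geq\; c'\,|w| - C $$
over all admissible finite words $w$, and conclude by the Bochi--Gourmelon criterion, which characterizes dominated splittings of index $i$ by exactly such an exponential singular-value gap along all orbit segments.

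The substantive step is the upgrade from Jordan gap on loops to Cartan gap on all words; it is the cocycle analogue of the corresponding implication in the theory of Anosov representations. To carry it out I would exploit the specification property of subshifts of finite type: any admissible word $w$ can be closed at bounded cost to an admissible loop $\hat w$, with $\Phi_{\hat w} = \Phi_\tau \Phi_w$ for a transition word $\tau$ of bounded length. Since
$$ \log|\lambda_j(M)| \;=\; \lim_{k\to\infty} \tfrac{1}{k}\log\sigma_j(M^k) $$
for any matrix $M$, replacing $\hat w$ by a sufficiently large power reveals the eigenvalue gap as a singular-value gap on the iterated loop, and standard norm inequalities for the short closure transfer this gap back to $w$.

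The main obstacle is that the asymptotic comparison of singular values with eigenvalue moduli is not itself uniform in the matrix, so one must choose the power $k = k(w)$ large enough to realize the asymptotic yet small compared to $|w|$ to preserve a useful linear rate. This forces a careful balancing argument, in the spirit of quantitative versions of Benoist's limit cone theorem or the Abels--Margulis--Soifer lemma. Exploiting that local constancy reduces $\Phi$ to a finite alphabet of matrices, indexed by the symbols of the SFT, is what makes such uniform balancing feasible in the present setting.
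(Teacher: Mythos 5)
Your skeleton is the same as the paper's: periodic measures give a uniform eigenvalue gap on loop products, specification closes every admissible word to a loop at bounded cost, and the Bochi--Gourmelon criterion converts a uniform singular-value gap on all words into a dominated splitting; the forward direction is also fine. But the mechanism you propose for the one substantive step --- upgrading the eigenvalue gap on loops to a singular-value gap on words --- has a genuine gap. Raising the closed-up loop $\hat w$ to a power $k$ yields a singular-value gap for $\Phi_{\hat w}^{k}$, which is the matrix of an orbit segment of length about $k|w|$, not of $w$; to return to $\Phi_w$ you would have to strip off $k-1$ copies of $\Phi_{\hat w}$ in addition to the short transition word, and the perturbation inequality $\|\mu(g_1 g g_2)-\mu(g)\|\le\|\mu(g_1)\|+\|\mu(g_2)\|$ only absorbs factors of \emph{bounded} norm, whereas $\|\Phi_{\hat w}\|$ grows exponentially in $|w|$. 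Moreover the rate of convergence of $\frac1k\mu(M^k)$ to $\lambda(M)$ degenerates as $M$ approaches a matrix with a nontrivial Jordan block, and nothing bounds this degeneration in terms of $|w|$, so no "balancing" choice of $k(w)$ is available. The tool you name in passing but do not actually deploy is the correct one: Benoist's theorem (via Abels--Margulis--Soifer) gives, for a \emph{semigroup} $\Gamma$ with \emph{reductive} Zariski closure, a finite set $S$ and $M>0$ with $\min_{s\in S}\|\mu(\gamma)-\lambda(\gamma s)\|\le M$ --- one multiplies by a single bounded element instead of taking powers. To invoke it one must first check (a second, essential use of local constancy) that the loop matrices based at a fixed symbol $j$ form a semigroup $\Gamma_j$, so that the periodic-orbit hypothesis says exactly that $\cC_\lambda(\Gamma_j)$ avoids the wall $\{x_i=x_{i+1}\}$, whence the same for $\cC_\mu(\Gamma_j)$.

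The second missing ingredient is the reduction to the reductive case. Benoist's theorem fails without reductivity of the Zariski closure, and indeed $\cC_\mu$ can be strictly larger than $\cC_\lambda$ for non-reductive closures, so no pointwise eigenvalue-to-singular-value estimate within the original semigroup can work in general. The paper handles this by passing to the semisimplified cocycle $\pi^{ss}\circ\Phi$ (still locally constant), proving domination there, and transferring it back to $\Phi$ using a conjugating sequence $h_n\Phi(\cdot)h_n^{-1}\to\Phi^{ss}$ together with the openness of dominated splittings. This step is entirely absent from your proposal and is necessary, not a technicality.
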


It is easy to show that if $(\sigma,\Phi)$ has a dominated splitting of index~$i$, then there exists $c>0$ such that for every ergodic $\sigma$-invariant measure $\nu$ on~$X$, the difference between the $i$-th and~$(i+1)$-th Lyapunov exponents of~$\nu$ is $\geq c$ (Fact~\ref{fact:domin->Lyap-gap}); the point of Theorem~\ref{thm:charact-domin-splitting} is the converse.

Note that in Theorem~\ref{thm:charact-domin-splitting} we do \emph{not} make any irreducibility assumption on the image of~$\Phi$; the proof involves a reduction to a sum of irreducible actions (\emph{semisimplification}) inspired by \cite{GGKW}.
Theorem~\ref{thm:charact-domin-splitting} actually holds for cocycles with values in any noncompact reductive linear Lie group~$G$ (see Section~\ref{sec:general-reductive}).

In Section~\ref{s.nonunifgap} we give an example showing that in general it is necessary for the gap in Lyapunov exponents to be uniform in order to have a dominated splitting, although in certain specific cases (\eg in dimension $d=2$) this can be relaxed.
We note that our proof of Theorem~\ref{thm:charact-domin-splitting} only uses the a priori weaker hypothesis that there is a uniform gap of Lyapunov exponents for \emph{periodic} measures (see Theorem~\ref{thm:sufficient-domin-splitting}), but this is in fact equivalent by \cite[Th.\,1.4]{Kalinin}. 

In dimension $d=2$, Theorem~\ref{thm:charact-domin-splitting} follows from a stronger result due to Avila--Bochi--Yoccoz \cite{ABY}.
In a recent note, Velozo \cite{Velozo} extended this result to the case of two-dimensional cocycles that are not necessarily locally constant, but satisfy a weaker assumption of \emph{fiber bunching}; this assumption is necessary (see \cite{BochiGaribaldi, Butler, Park} for more details).
Other recent interesting related results can be found in the paper \cite{BS} of Breuillard--Sert or pointed out in some questions in \cite{Bochi-ICM}.
See also the end of Section~\ref{subsec:prelim-domin-splitting} for further discussion.

\subsection{Eigenvalue gaps for representations of finitely generated groups} \label{subsec:intro-hyp-gps}

The second topic of the paper is group representations with a uniform gap in the exponential growth rate of eigenvalues.
Let $\Gamma$ be a group with a finite generating subset~$F$.
Let $|\cdot|_F : \Gamma\to\nolinebreak\NN$ be the word length and $|\cdot|_{F,\infty} : \Gamma\to\NN$ the stable length of~$\Gamma$ with respect to~$F$ (see Section~\ref{subsec:terminology-gaps}).
For $g\in\GL(d,\RR)$, we denote by $\mu_1(g)\geq\dots\geq\mu_d(g)$ (\resp $\lambda_1(g)\geq\dots\geq\lambda_d(g)$) the logarithms of the singular values (\resp of the moduli of the eigenvalues) of~$g$.
For $1\leq i\leq d-1$, we say that a representation $\rho : \Gamma\to\GL(d,\RR)$ has a \emph{uniform $i$-gap in singular values} (\resp \emph{in eigenvalues}) if there exist $c,c'>0$ such that $\mu_i(\rho(\gamma)) - \mu_{i+1}(\rho(\gamma)) \geq c \, |\gamma|_F - c'$ (\resp $\lambda_i(\rho(\gamma)) -\nolinebreak \lambda_{i+1}(\rho(\gamma)) \geq c \, |\gamma|_{F,\infty} - c'$) for all $\gamma\in\Gamma$; this condition does not depend on the choice of~$F$ (see Remark~\ref{rem:change-gen-set}).
We say that $\Gamma$ is \emph{word hyperbolic} if its Cayley graph is Gromov hyperbolic.
With this terminology, we prove the following.

\begin{prop} \label{prop:eig-sing-gap-group}
Let $\Gamma$ be a word hyperbolic group and let $1\leq i<d$ be integers.
A representation $\rho: \Gamma \to \GL(d,\RR)$ has a uniform $i$-gap in eigenvalues if and only if it has a uniform $i$-gap in singular values.
\end{prop}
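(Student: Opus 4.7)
The easy direction (uniform singular value gap implies uniform eigenvalue gap) is general and does not use hyperbolicity. Recall the standard identity $\lambda_i(g) = \lim_{n\to\infty} \mu_i(g^n)/n$, valid for every $g\in\GL(d,\RR)$ and every~$i$. Combined with the definition of stable length, which gives $|\gamma^n|_F/n \to |\gamma|_{F,\infty}$ as $n\to\infty$, applying the hypothesis to the powers $\gamma^n$, dividing by~$n$, and passing to the limit yields
\[
\lambda_i(\rho(\gamma)) - \lambda_{i+1}(\rho(\gamma)) \geq c\,|\gamma|_{F,\infty}
\]
for every $\gamma\in\Gamma$, which is the desired uniform $i$-gap in eigenvalues (even with $c'=0$).

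For the hard direction (uniform eigenvalue gap implies uniform singular value gap), my plan is to reduce the statement to Theorem~\ref{thm:charact-domin-splitting} via the sofic subshift associated to~$\Gamma$. By classical results of Cannon and of Coornaert--Papadopoulos, when $\Gamma$ is word hyperbolic, the set of (suitably chosen) geodesic representatives of elements of~$\Gamma$ forms a regular language, giving rise to a sofic subshift~$X$ that encodes the boundary/geodesic dynamics of~$\Gamma$. The representation~$\rho$ induces on~$X$ a locally constant cocycle $\Phi_\rho$ taking values in~$\GL(d,\RR)$: on the finite word labelling an edge-path of the underlying automaton, $\Phi_\rho$ records the product of the $\rho$-images of the corresponding generators of~$\Gamma$.

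Periodic orbits of~$X$ correspond (up to bounded distortion) to conjugacy classes of infinite-order elements of~$\Gamma$, and for a periodic orbit of period~$\ell$ represented by an element $\gamma\in\Gamma$ with $|\gamma|_F=\ell$ and $|\gamma|_{F,\infty}$ comparable to~$\ell$, the $i$-th Lyapunov exponent of the associated periodic measure equals $\lambda_i(\rho(\gamma))/\ell$. Hence a uniform $i$-gap in eigenvalues for~$\rho$ translates into a uniform gap between the $i$-th and $(i+1)$-th Lyapunov exponents for all \emph{periodic} measures of~$\Phi_\rho$; by the sofic analogue of Theorem~\ref{thm:charact-domin-splitting} (established in the subsequent sections of the paper), together with Kalinin's theorem allowing one to pass from periodic to all ergodic measures, the cocycle~$\Phi_\rho$ admits a dominated splitting of index~$i$. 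Unpacking what this means in terms of the growth of products along finite words in the automaton yields the uniform $i$-gap in singular values for~$\rho$.

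I expect the main obstacle to be in the third step: choosing the encoding of~$\Gamma$ so that every periodic orbit of~$X$ corresponds to a conjugacy class whose stable length is genuinely comparable to the period of the orbit (rather than ``degenerating'' through torsion elements or through loops in the automaton that do not encode a bona fide infinite-order element). This requires some care with the automatic structure of~$\Gamma$, but the main body of the paper develops exactly the sofic framework needed to make this translation work cleanly.
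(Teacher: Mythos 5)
Your easy direction is correct and identical to the paper's (Remark~\ref{rem:gaps}.\eqref{item:sing-weak-eig}). The hard direction, however, is circular as written. The ``sofic analogue of Theorem~\ref{thm:charact-domin-splitting}'' for the subshift $\cG_F$ attached to a word hyperbolic group is Theorem~\ref{thm:charact-domin-splitting-hyp-gp}, and the paper proves that theorem \emph{by invoking Proposition~\ref{prop:eig-sing-gap-group}}: the argument in Section~\ref{sec:proof-teoC} converts the Lyapunov gap on periodic orbits of $\cG_F$ into a uniform eigenvalue gap for $\rho$, applies Proposition~\ref{prop:eig-sing-gap-group} to obtain the singular value gap, and only then deduces domination from Fact~\ref{fact-BG}. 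So you cannot quote it here. To make your route non-circular you would have to extend the proof of Theorem~\ref{thm:charact-domin-splitting} itself from subshifts of finite type to sofic subshifts, and that is not automatic: the key Lemma~\ref{l.semigroup} (return words through a fixed symbol form a semigroup, to which Benoist's Fact~\ref{fact:Benoist} is applied) uses the adjacency matrix of the SFT and would have to be reworked at the level of the automaton presenting $\cG_F$; and the final ``unpacking'' of domination over $\cG_F$ into a singular value gap for \emph{all} $\gamma\in\Gamma$ --- not only those elements labelled by subwords of bi-infinite geodesics --- is itself a nontrivial step, essentially Fact~\ref{fact:Ano-sg-gap} of \cite{KLP,BPS}. (Kalinin's theorem, on the other hand, is not needed: the SFT argument only ever uses the gap on periodic measures.)

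For comparison, the paper's proof is direct and avoids the subshift altogether: after a semisimplification reducing to the case where the Zariski closure of $\rho(\Gamma)$ is reductive, the eigenvalue gap makes $\rho$ displacing; property~(U) of \cite{DGLM}, valid for word hyperbolic groups, upgrades this to a quasi-isometric embedding $\|\mu(\rho(\gamma))\|\geq c_0|\gamma|_F-c_0'$; and Benoist's theorem $\cC_{\mu}(\rho(\Gamma))=\cC_{\lambda}(\rho(\Gamma))$ (Fact~\ref{fact:Benoist}) transfers the statement ``the eigenvalue cone avoids the $i$-th wall'' to the singular value cone. Note that the same two nontrivial inputs (semisimplification and the Abels--Margulis--Soifer/Benoist mechanism) would reappear inside any honest sofic extension of Theorem~\ref{thm:charact-domin-splitting}, so the detour through dynamics would not save work even if completed.
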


Proposition~\ref{prop:eig-sing-gap-group} answers a question of \cite{BPS}.
By a result of Kapovich--Leeb--Porti \cite{KLP} (Fact~\ref{fact:Ano-sg-gap}, for which an alternative proof was given in \cite{BPS}), the representations of~$\Gamma$ with a uniform $i$-gap in singular values are exactly the so-called \emph{$P_i$-Anosov representations} of~$\Gamma$, which play an important role in higher Teichm\"uller theory (see \eg \cite[\S\,4.3]{Kassel-ICM} and \cite[\S\,3.3]{wie-icm}).
Their images are discrete subgroups of $\GL(d,\RR)$ with good dynamical, geometric, and topological properties: see \eg \cite{lab06,gw12,BCLS,klp-survey,GGKW,DGK,zim}.
Thus Proposition~\ref{prop:eig-sing-gap-group} yields a new characterization of Anosov representations of~$\Gamma$ (Corollary~\ref{cor:char-Ano-eigenv}), as announced in \cite{Kassel-ICM,Potrie-ICM}.
See Section~\ref{sec:general-reductive} for a generalization to representations to any noncompact reductive linear Lie group.
Note that representations with a uniform $i$-gap in singular values are also called \emph{$i$-dominated representations} in \cite{BPS}.

The equivalence in Proposition~\ref{prop:eig-sing-gap-group} becomes more subtle when $\Gamma$ is a finitely generated group that is not word hyperbolic: see Section~\ref{subsec:eig-gap-nonhyp-gp}.

Using Proposition~\ref{prop:eig-sing-gap-group}, we obtain an analogue of Theorem~\ref{thm:charact-domin-splitting} for locally constant cocycles over certain sofic subshifts which are not necessarily of finite type, but come from word hyperbolic groups.
More precisely, for a group $\Gamma$ with a finite generating subset~$F$ as above, consider the closed subset
\begin{equation} \label{eqn:G-F-group}
\cG_F = \big\{ (f_k)_{k\in\ZZ} \in (F\cup F^{-1})^{\ZZ} \ : \ |f_k \cdots f_{k+\ell}|_F = \ell+1 \quad \forall k\in\ZZ,\ \forall\ell\in\NN\big\}\end{equation}
of~$(F\cup F^{-1})^{\ZZ}$ (where $\cG$ stands for `geodesic'), with the shift $\sigma : \cG_F\to\cG_F$.
For any representation $\rho : \Gamma\to\GL(d,\RR)$, we denote by $\Phi_{\rho} : \cG_F\to\GL(d,\RR)$ the locally constant cocycle over~$\sigma$ sending $(f_k)_{k\in\ZZ}$ to $\rho(f_0)^{-1}$.
We prove the following.

\begin{thm} \label{thm:charact-domin-splitting-hyp-gp}
Let $\Gamma$ be a word hyperbolic group, let $1\leq i<d$ be integers, and let $\rho : \Gamma\to\GL(d,\RR)$ be a representation. 
Then the cocycle $(\sigma,\Phi_{\rho})$ over~$\cG_F$ has a dominated splitting of index~$i$ if and only if there exists $c>0$ such that for every ergodic $\sigma$-invariant measure $\nu$ on~$\cG_F$ the difference between the $i$-th and~$(i+1)$-th Lyapunov exponents of~$\nu$ is $\geq c$.
\end{thm}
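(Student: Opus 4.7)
\smallskip
\noindent\emph{Proof proposal.} The implication from dominated splitting to uniform Lyapunov gap is the analogue of Fact~\ref{fact:domin->Lyap-gap} for the locally constant cocycle $(\sigma,\Phi_\rho)$ over~$\cG_F$, so the substance lies in the converse. The plan is to run the chain
\begin{align*}
\text{uniform Lyapunov gap} & \Longrightarrow \text{uniform eigenvalue gap on }\Gamma\\
& \Longrightarrow \text{uniform singular value gap on }\Gamma \Longrightarrow \text{dominated splitting},
\end{align*}
using \cite[Th.\,1.4]{Kalinin} to reduce the first implication to periodic measures, and Proposition~\ref{prop:eig-sing-gap-group} to cross from eigenvalues to singular values.

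For the first step, I would observe that a periodic orbit of $\sigma$ in~$\cG_F$ of period~$n$ comes from an element $\gamma\in\Gamma$ whose positive powers are all geodesic (so that $|\gamma|_{F,\infty}=|\gamma|_F=n$), and whose associated cocycle product over one period is $\rho(\gamma)^{-1}$. Using the identity $\lambda_j(g^{-1})=-\lambda_{d+1-j}(g)$ and Kalinin's reduction to periodic measures, the hypothesis becomes: there is $c>0$ with
\[
\lambda_{d-i}(\rho(\gamma))-\lambda_{d-i+1}(\rho(\gamma))\ \geq\ c\,|\gamma|_{F,\infty}
\]
for every such~$\gamma$. A standard hyperbolicity argument (quasi-convexity, with uniform constants, of cyclic subgroups of a word hyperbolic group) then allows one to propagate this bound to every infinite-order $\gamma\in\Gamma$ by passing to an appropriate conjugate of a uniformly bounded power, using conjugation-invariance of eigenvalues together with the homogeneities $\lambda_j(\rho(\gamma^p))=p\,\lambda_j(\rho(\gamma))$ and $|\gamma^p|_{F,\infty}=p\,|\gamma|_{F,\infty}$. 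Torsion elements are absorbed into an additive constant.

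Applying Proposition~\ref{prop:eig-sing-gap-group} with index $d-i$ upgrades the resulting uniform $(d-i)$-gap in eigenvalues to a uniform $(d-i)$-gap in singular values for~$\rho$. Unwinding the definition of~$\Phi_\rho$, the $n$-step product $\Phi_\rho^{(n)}(f)=\rho(f_0\cdots f_{n-1})^{-1}$ then satisfies $\mu_i(\Phi_\rho^{(n)}(f))-\mu_{i+1}(\Phi_\rho^{(n)}(f))\geq c_1 n-c_1'$ uniformly in $f\in\cG_F$, since $f_0\cdots f_{n-1}$ is a geodesic word of length~$n$. This linear growth of the singular value gap along every orbit is the standard criterion for a dominated splitting of index~$i$ for a locally constant cocycle over a subshift.

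The step I expect to be the main obstacle is the passage from ``good'' periodic elements to arbitrary infinite-order elements of~$\Gamma$: selecting the right conjugate of a uniformly bounded power, with geodesic powers and uniform control on $|\cdot|_F$ versus $|\cdot|_{F,\infty}$, is where the word hyperbolicity of~$\Gamma$ is genuinely used beyond its role in Proposition~\ref{prop:eig-sing-gap-group}. A more technical point is that \cite[Th.\,1.4]{Kalinin} is stated for subshifts of finite type while $\cG_F$ is only sofic in general; the reduction to periodic measures therefore needs to be adapted, \eg by realizing $\cG_F$ as a factor of a finite-type system via the Cannon automaton of~$\Gamma$.
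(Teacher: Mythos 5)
Your proposal follows essentially the same route as the paper: restrict the Lyapunov-gap hypothesis to periodic measures of $\cG_F$, use the fact (the paper cites \cite[Th.\,5.1]{ef97}) that a uniformly bounded power of any infinite-order element is conjugate to some $\beta$ with $|\beta|_{F,\infty}=|\beta|_F$ --- which realizes $\rho(\beta)^{-1}$ as a period-return map of the cocycle --- to deduce a uniform eigenvalue gap for $\rho$, then apply Proposition~\ref{prop:eig-sing-gap-group} and the Bochi--Gourmelon criterion, exactly as in Section~\ref{sec:proof-teoC}. The only deviation is your appeal to \cite{Kalinin}, which is unnecessary (and renders the sofic-versus-finite-type worry moot): the hypothesis is a gap for \emph{all} $\sigma$-invariant ergodic measures, so the periodic ones are covered outright.
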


\subsection{Anosov representations for finitely generated semigroups}

The third topic of the paper is \emph{semigroup} representations with a uniform gap in the exponential growth rate of eigenvalues and singular values.

Given the importance of Anosov representations in the recent study of discrete subgroups of Lie groups and higher Teichm\"uller theory, it seems interesting to try to develop a theory of Anosov representations for semigroups.
For instance, the theory of random walks has been quite developed for semigroups (see \eg \cite{BenoistQuint-livre}) and it may happen that having a uniform version allows to understand better certain phenomena in a restricted (yet open) class.

It is not completely clear a priori how to adapt the original definition of Anosov representations from \cite{lab06,gw12} to semigroups.
Instead, having the use of dominated splittings for linear cocycles in mind, we propose the following definition (see Section~\ref{subsec:semig-def-Anosov}).

\begin{defi} \label{def:Anosov-semig}
Let $\semig$ be a finitely generated semigroup and $\rho : \semig\to\GL(d,\RR)$ a semigroup homomorphism, where $d\geq 2$.
For $1\leq i\leq d-1$, we say that $\rho$ is \emph{$P_i$-Anosov} if it has a uniform $i$-gap in singular values.
We say that $\rho$ is \emph{Anosov} if it is $P_i$-Anosov for some $1\leq i\leq d-1$.
\end{defi}

Such semigroup homomorphisms have a discrete image (Remark~\ref{rem:Ano-discrete}), but can be far from injective, and the semigroup $\semig$ does not need to have a Gromov hyperbolic Cayley graph.
Moreover, $P_i$-Anosov does not imply $P_{d-i}$-Anosov in general: see Section~\ref{subsec:semig-vs-gp} for differences with the group case.

Using dominated splittings for linear cocycles naturally associated to semigroup homomorphisms, we construct boundary maps for Anosov semigroup homomorphisms (Section~\ref{subsec:semig-bound-map}) and prove that under some condition, which we call property~(D), the space of $P_i$-Anosov semigroup homomorphisms is an open subset of $\Hom(\semig,\GL(d,\RR))$ (Section~\ref{subsec:semig-dom-splitting}).
Similarly to Proposition~\ref{prop:eig-sing-gap-group}, we show that under some additional condition, which we call property~(U), a semigroup homomorphism is $P_i$-Anosov if and only if it has a uniform $i$-gap in eigenvalues (Section~\ref{subsec:gaps-sv-ev-semig}).

In the case of \emph{completely simple} semigroups, \ie semigroups $\semig$ that have no two-sided ideals other than themselves but possess minimal one-sided ideals, we prove that properties (D) and~(U) are satisfied as soon as Anosov representations of~$\semig$ exist, and that in this case there are strong similarities with the theory of Anosov representations for groups.

\begin{prop} \label{prop:main-Rees-semig}
Let $\semig$ be a finitely generated, completely simple semigroup.
If $\semig$ admits an Anosov representation into some $\GL(d',\RR)$ with $d'\geq 2$, then the Cayley graph of~$\semig$ is Gromov hyperbolic, and any infinite-order element $\gamma\in\semig$ has a unique attracting fixed point $\eta_{\gamma}^+$ in the Gromov boundary $\partial\semig$ of~$\semig$.
In this case, for any integers $1\leq k<d$ and any semigroup homomorphism $\rho : \semig\to\GL(d,\RR)$, the following are equivalent:
\begin{enumerate}
  \item\label{item:Pk-Ano-Rees} $\rho$ is $P_k$-Anosov;
  \item\label{item:Pd-k-Ano-Rees} $\rho$ is $P_{d-k}$-Anosov;
  \item\label{item:bound-maps-Rees} there exist continuous $\rho$-equivariant dynamics-preserving boundary maps\linebreak $\xi : \partial\semig\to\mathrm{Gr}_k(\RR^d)$ and $\xi' : \partial\semig\to\mathrm{Gr}_{d-k}(\RR^d)$ which are compatible and transverse, and $\mu_k(\rho(\gamma_n)) - \mu_{k+1}(\rho(\gamma_n)) \to +\infty$ for any sequence $(\gamma_n)_{n\in\NN}$ of pairwise distinct elements of~$\semig$;
  \item\label{item:eig-gap-Rees} $\rho$ has a uniform $k$-gap in eigenvalues.
\end{enumerate}
Moreover, $P_k$-Anosov representations form an open subset of $\Hom(\semig,\GL(d,\RR))$.
\end{prop}

In~\eqref{item:bound-maps-Rees}, following \cite{GGKW}, we say that $\xi$ (\resp $\xi'$) is \emph{dynamics-preserving} if for any infinite-order element $\gamma\in\semig$, the image by $\xi$ (\resp $\xi'$) of the attracting fixed point $\eta_{\gamma}^+$ of $\gamma$ in $\partial\semig$ is an attracting fixed point of $\rho(\gamma)$ in $\mathrm{Gr}_k(\RR^d)$ (\resp $\mathrm{Gr}_{d-k}(\RR^d)$).
We say that $\xi : \partial\semig\to\mathrm{Gr}_k(\RR^d)$ and $\xi' : \partial\semig\to\mathrm{Gr}_{d-k}(\RR^d)$ are \emph{compatible} if for any $\eta\in\partial\semig$, the $k$-plane $\xi(\eta)$ and the $(d-k)$-plane $\xi'(\eta)$ intersect in a plane of dimension $\min(k,d-k)$.
We say that they are \emph{transverse} if for any $\eta,\eta'\in\partial\semig$ such that $\gamma\cdot\eta \neq \gamma\cdot\eta'$ for some $\gamma\in\semig$, the $k$-plane $\xi(\eta)$ and the $(d-k)$-plane $\xi'(\eta')$ intersect trivially.

Fountain--Kambites \cite{fk04} proved that for completely simple semigroups~$\semig$, the Gromov hyperbolicity of the Cayley graph of~$\semig$ is equivalent to a notion of hyperbolicity for~$\semig$ introduced by Gilman \cite{gil02} (see also \cite{dg04}) in language-theoretic terms.

\subsection{Organization of the paper}

In Section~\ref{sec:prelim} we recall basic definitions and useful facts on linear cocycles and their dominated splittings, and on limit cones of singular values and eigenvalues.
In Section~\ref{sec:proof-teoA} we prove Theorem~\ref{thm:charact-domin-splitting} and give an example showing that the uniformity of the Lyapunov exponent gap for periodic orbits is necessary.
In Section~\ref{sec:gaps-in-groups} we establish Proposition~\ref{prop:eig-sing-gap-group} and Theorem~\ref{thm:charact-domin-splitting-hyp-gp}.
In Section~\ref{sec:Ano-semig} we treat the case of finitely generated semigroups, and in particular prove Proposition~\ref{prop:main-Rees-semig}.
Finally, in Section~\ref{sec:general-reductive} we extend some of our results from $\GL(d,\RR)$ to any noncompact reductive linear Lie group~$G$.

\section{Preliminaries} \label{sec:prelim}

In the whole paper, we fix an integer $d\geq 1$ and denote by $\Vert\cdot\Vert$ the standard Euclidean norm on~$\RR^d$.
For a matrix $g \in \GL(d,\RR)$ we denote by $\lambda_1(g) \geq \ldots \geq \lambda_d(g)$ the logarithms of the moduli of the (complex) eigenvalues of~$g$, and by $\mu_1(g) \geq \ldots \geq \mu_d(g)$ the logarithms of the singular values of~$g$, \ie half the logarithms of the eigenvalues of $gg^t$.
This defines continuous maps
\begin{equation} \label{eqn:lambda-mu-def}
\left\{ \begin{array}{l}
\lambda = (\lambda_1,\dots,\lambda_d)\ :\ \GL(d,\RR) \longrightarrow \RR^d,\\
\mu = (\mu_1,\dots,\mu_d)\ :\ \GL(d,\RR) \longrightarrow \RR^d,
\end{array} \right.
\end{equation}
respectively called \emph{Lyapunov} (or \emph{Jordan}) \emph{projection} and \emph{Cartan projection}.

\begin{remark} \label{rem:mu1-norm}
For any $g\in\GL(d,\RR)$, the operator norm $\Vert g\Vert$ is equal to $e^{\mu_1(g)}$.
In particular, $\mu_1(gg')\leq\mu_1(g)+\mu_1(g')$ for all $g,g'\in\GL(d,\RR)$.
\end{remark}

\subsection{Subshifts} \label{subsec:subshifts}

Fix an integer $N>0$.
We denote by $\Sigma$ the \emph{full shift space} on $N$ elements, that is, $\Sigma = \{ 1, \ldots, N \}^\ZZ$.
It is compact for the product topology.
The \emph{shift} on $\Sigma$ is the continuous map $\sigma : \Sigma \to \Sigma$ given by shifting the sequence one position to the left: namely, $\sigma(\sx) = \sy$ where $\sx=(x_k)_{k\in\ZZ} \in \Sigma$ and $\sy=(y_k)_{k\in\ZZ} \in \Sigma$ satisfy $y_k=x_{k+1}$ for all $k\in\ZZ$.

A $\sigma$-invariant closed subset of~$\Sigma$, endowed with the restriction of~$\sigma$, is called a \emph{subshift}.
We shall be mainly interested in subshifts of \emph{finite type}, \ie of the form
$$\Sigma_A = \big\{ \sx=(x_k)_{k\in\ZZ} \in \Sigma \ : \  a_{x_k,x_{k+1}} = 1 \quad \forall k\in\ZZ \big\} $$
where $A =(a_{i,j})_{1\leq i,j\leq N}$ is an $N\times N$ matrix whose entries are $0$'s and $1$'s. These subshifts are sometimes also called \emph{Markov shifts}. 

In this paper, as often in the literature, we shall make a certain irreducibility assumption on the matrix $A$ by imposing the existence of $n_0>0$ such that for every $n\geq n_0$ the matrix $A^n$ has all its entries positive.
Typically, one reduces any subshift of finite type to such irreducible ones by decomposing $A$ into blocks: see \cite{LindMarcus}.

Other interesting subshifts are \emph{sofic subshifts}, which include in particular the subshifts $\cG_F$ associated with word hyperbolic groups, as in Section~\ref{subsec:intro-hyp-gps}.
We refer to \cite[\S\,5]{BPS} for the definition and more details.
Similarly to subshifts of finite type, one important feature of sofic subshifts is to admit plenty of periodic orbits, which in some ways govern their dynamics through a \emph{specification property} (Fact~\ref{fact-specification}).

\subsection{Linear cocycles} \label{subsec:cocycles}

Let $X$ be a compact metric space.
A \emph{linear cocycle} over~$X$ is a pair $(T,\Phi)$ where $T : X \to X$ and $\Phi: X \to \GL(d,\RR)$ are continuous maps; sometimes, by a little abuse of notation, we shall also say that $\Phi$ is a linear cocycle over~$T$.
For any $n\in\NN^*$ we define $\Phi^{(n)} : X \to \GL(d,\RR)$ by
\begin{equation} \label{eqn:Phi-(n)}
\Phi^{(n)}(\sx) := \Phi(T^{n-1}(\sx)) \cdots \Phi(\sx)
\end{equation}
for all $\sx\in X$.
We also set $\Phi^{(0)}(\sx) = \mathrm{id} \in \GL(d,\RR)$ and, when $T$ is invertible,
$$\Phi^{(-n)}(\sx) := (\Phi^{(n)}(T^{-n}(\sx)))^{-1} $$
for all $n\in\NN^*$.
We then have the \emph{cocycle relation}
$$\Phi^{(n+m)}(\sx) = \Phi^{(m)}(T^n(\sx)) \, \Phi^{(n)}(\sx)$$
for all $\sx\in X$ and $n,m\in\NN$ (and for all $\sx\in X$ and $n,m \in \ZZ$ when $T$ is invertible).
We can define a skew-product map $\mathcal{F} = (T,\Phi) : X \times \RR^d \to X \times \RR^d$ by
$$\mathcal{F}(\sx,v) := (T(\sx), \Phi(\sx) v) $$
for all $\sx\in X$ and $v\in\RR^d$.
It is easy to check that $\mathcal{F}^n(\sx,v)= (T^n(\sx), \Phi^{(n)}(\sx) v)$.

One can similarly define cocycles $(T,\Phi)$ where $\Phi$ takes values in any linear Lie group~$G$.
We refer to \cite{Viana} for a broad presentation of linear cocycles.

\subsection{Lyapunov exponents}\label{ss.lyapunovexponents} 

Let $X$ be a compact metric space and $(T,\Phi)$ a linear cocycle over~$X$, where $\Phi : X\to\GL(d,\RR)$.
By Oseledets's theorem (see \eg \cite{Viana}), for any ergodic $T$-invariant measure $\nu$ on~$X$, there exist real numbers $\hat \chi_1(\nu) > \ldots > \hat \chi_k(\nu)$ (with $k \leq d$), called the \emph{Lyapunov exponents} of~$\nu$ with respect to $(T,\Phi)$, with the property that for $\nu$-almost every $\sx\in X$ there exists a (possibly incomplete) flag $\{0\} \subsetneq E_{k}(\sx) \subsetneq \ldots \subsetneq E_1(\sx) = \RR^d $ which is $(T,\Phi)$-equivariant (\ie $\Phi(\sx) E_{\ell}(\sx) = E_{\ell}(T(\sx))$ for all $\sx\in X$ and $1\leq\ell\leq k$) and such that for any $v \in E_{\ell}(\sx) \smallsetminus E_{\ell+1}(\sx)$,
$$ \lim_{n\to +\infty} \frac{1}{n} \log \|\Phi^{(n)}(\sx) v\| = \hat \chi_{\ell}(\nu). $$
Ergodicity implies that the dimension of $E_{\ell}(\sx)$ is constant $\nu$-almost everywhere.
The integer $\dim E_{\ell} - \dim E_{\ell+1} \geq 1$ is called the \emph{multiplicity} of $\hat \chi_{\ell}(\nu)$.
Equivalently, we can count the Lyapunov exponents with multiplicity and define
$$\chi_1(\nu) \geq \chi_2(\nu) \geq \ldots \geq \chi_d(\nu)$$
by $\chi_i(\nu) = \hat \chi_{\ell}(\nu)$ for all $i\in\{\dim E_1-\dim E_{\ell}+1,\dots,\dim E_1-\dim E_{\ell+1}\}$.
We shall use the following terminology.

\begin{defi} \label{def:unif-gap-Lyap-exp}
A linear cocycle $(T,\Phi)$ over~$X$ has a \emph{uniform $i$-gap of Lyapunov exponents} if there exists $c>0$ such that the Lyapunov exponents with respect to $(T,\Phi)$ satisfy $\chi_i(\nu) \geq \chi_{i+1}(\nu) + c$ for all ergodic $T$-invariant measures $\nu$ on~$X$.
\end{defi}

\subsection{Dominated splittings} \label{subsec:prelim-domin-splitting}

Let $X$ be a compact metric space and $(T,\Phi)$ a linear cocycle over~$X$, where $\Phi : X\to\GL(d,\RR)$.
For $1\leq i\leq d-1$, the cocycle $(T,\Phi)$ is said to have a \emph{dominated splitting of index~$i$} (or to be $i$-\emph{dominated}) if there exist a continuous $(T,\Phi)$-equivariant map $E^{cs}: X \to \mathrm{Gr}_{d-i}(\RR^d)$ into the Grassmanian of $(d-i)$-planes of~$\R^d$ and constants $C,C'>0$ such that for any $n\in\NN$, any $\sx\in X$, and any unit vectors $v \in E^{cs}(\sx)$ and $w \in (E^{cs}(\sx))^{\perp}$,
\begin{equation}\label{eqn:domination}
 \log \|\Phi^{(n)}(\sx) w \| - \log \|\Phi^{(n)}(\sx) v \|  \geq Cn - C'. 
 \end{equation}
The map $E^{cs}$ is necessarily unique (see \eg \cite[Prop.\,2.2]{CP}). 

\begin{remark} \label{rem-Tinvertible}
When $T$ is invertible, this implies the existence of a continuous $(T,\Phi)$-equivariant map $E^{cu}: X \to \mathrm{Gr}_i(\RR^d)$ such that  $E^{cu}(\sx) \oplus E^{cs}(\sx)=\RR^d$ for all $\sx\in X$.
\end{remark}

An important property of dominated splittings is given by the following fact, which relies on the cone-field criterion (see \eg \cite[\S\,2.2]{CP} or \cite[Th.\,5.1]{BPS}): 

\begin{fact} \label{fact-dsopen}
Let $(T,\Phi)$ be a linear cocycle over~$X$ with a dominated splitting of index~$i$.
Then there exists a neighborhood $\cU$ of $\Phi$ in $C^0(X,\GL(d,\RR))$ (for the compact open topology) such that $(T,\hat\Phi)$ has a dominated splitting of index~$i$ for all $\hat \Phi \in \cU$. 
\end{fact}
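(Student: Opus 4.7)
The plan is to apply the cone-field criterion invoked in the statement (see \cite[\S\,2.2]{CP} or \cite[Th.\,5.1]{BPS}), which characterizes cocycles with a dominated splitting of index~$i$ as exactly those admitting a continuous field of cones $\{C(x)\}_{x\in X}$ of appropriate index, strictly forward invariant under some iterate $\Phi^{(n_0)}$; that is, $\Phi^{(n_0)}(x)\, C(x) \subset \mathrm{int}\, C(T^{n_0}(x))$ for all $x \in X$. Given this criterion, the openness of domination reduces to the fact that strict cone invariance is preserved under small $C^0$ perturbations, thanks to the compactness of~$X$.

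Starting from the $(T,\Phi)$-equivariant bundle $E^{cs}$ produced by the hypothesis, I would build such a cone field $C(x)$ of index~$i$ by thickening $E^{cs}(x)^\perp$ in the Grassmannian, say as the set of vectors making angle at most $\alpha$ with $E^{cs}(x)^\perp$, for some small $\alpha>0$. The domination inequality~\eqref{eqn:domination}, together with continuity of $x\mapsto E^{cs}(x)$ and compactness of~$X$, yields an integer $n_0$ and a choice of~$\alpha$ for which the strict invariance $\Phi^{(n_0)}(x)\, C(x) \subset \mathrm{int}\, C(T^{n_0}(x))$ holds uniformly in $x \in X$.

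Now the map $\hat\Phi \mapsto \hat\Phi^{(n_0)}$ from $C^0(X,\GL(d,\RR))$ to itself is continuous, and by compactness of~$X$ the compact-open topology agrees with uniform convergence, so $\hat\Phi^{(n_0)}(x)$ is uniformly close to $\Phi^{(n_0)}(x)$ as soon as $\hat\Phi$ is close enough to~$\Phi$. Since the strict inclusion $g \cdot C(x) \subset \mathrm{int}\, C(T^{n_0}(x))$ is an open condition on the linear map~$g$, uniform in $x$ by compactness, there is a neighborhood $\cU$ of~$\Phi$ in $C^0(X,\GL(d,\RR))$ such that $\hat\Phi^{(n_0)}(x)\, C(x) \subset \mathrm{int}\, C(T^{n_0}(x))$ for every $\hat\Phi \in \cU$ and every $x \in X$. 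Applying the converse direction of the cone-field criterion to $(T,\hat\Phi)$ then yields a dominated splitting of index~$i$ for this perturbed cocycle; note that the $E^{cs}$-bundle for $\hat\Phi$ may differ from that of~$\Phi$, but this is allowed by the statement.

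The only (minor) obstacle is bookkeeping: one must verify the uniformity in $x$ of both the strict cone invariance and the continuity of the $n_0$-fold product, which is why compactness of~$X$ is essential. No new ideas are needed beyond invoking the cone-field criterion itself, and this is the standard route to $C^0$-openness of dominated splittings.
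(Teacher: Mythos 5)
Your proof is correct and follows exactly the route the paper indicates: the paper states this fact without proof, citing the cone-field criterion of \cite[\S\,2.2]{CP} and \cite[Th.\,5.1]{BPS}, and your argument (build a strictly invariant cone field around $(E^{cs})^{\perp}$ from the domination inequality, then use compactness of~$X$ and the openness of strict cone inclusion to pass to $C^0$-perturbations) is the standard proof behind those references.
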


The following is well-known; we give a proof for the reader's convenience.

\begin{fact} \label{fact:domin->Lyap-gap}
Suppose that a linear cocycle $(T, \Phi)$ over~$X$ has a dominated splitting of index~$i$.
Then it has a uniform $i$-gap of Lyapunov exponents (Definition~\ref{def:unif-gap-Lyap-exp}).
\end{fact}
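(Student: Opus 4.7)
The plan is to reduce the statement to a \emph{uniform-in-$x$} gap between the $i$-th and $(i+1)$-th singular values of $\Phi^{(n)}(x)$, and then read off the desired Lyapunov exponent gap by an application of Oseledets's theorem.

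For the first step, I would use the Courant--Fischer min--max characterization of singular values: for any $x\in X$, any $n\in\NN$, and any subspaces $V,W\subset\RR^d$ with $\dim V = i$ and $\dim W = d-i$, one has
$$\mu_i\bigl(\Phi^{(n)}(x)\bigr) \;\geq\; \min_{w\in V,\,\|w\|=1} \log \|\Phi^{(n)}(x)\,w\|, \qquad \mu_{i+1}\bigl(\Phi^{(n)}(x)\bigr) \;\leq\; \max_{v\in W,\,\|v\|=1} \log \|\Phi^{(n)}(x)\,v\|.$$
Applying this with $V = (E^{cs}(x))^{\perp}$ and $W = E^{cs}(x)$, and then feeding the minimizing~$w$ and maximizing~$v$ into the pointwise domination inequality~\eqref{eqn:domination}, I obtain
$$\mu_i\bigl(\Phi^{(n)}(x)\bigr) - \mu_{i+1}\bigl(\Phi^{(n)}(x)\bigr) \;\geq\; Cn - C'$$
uniformly for every $x\in X$ and every $n\in\NN$.

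For the second step, I would invoke Oseledets's theorem (equivalently, Kingman's subadditive ergodic theorem applied to the exterior powers $\Lambda^j\Phi^{(n)}$ for $j=i$ and $j=i+1$): for any $T$-invariant ergodic measure~$\nu$ on~$X$ and $\nu$-almost every $x\in X$,
$$\lim_{n\to\infty} \frac{1}{n}\,\mu_j\bigl(\Phi^{(n)}(x)\bigr) \;=\; \chi_j(\nu) \qquad (1\leq j\leq d).$$
Dividing the pointwise gap above by~$n$ and letting $n\to\infty$ yields $\chi_i(\nu)-\chi_{i+1}(\nu)\geq C$. Since $C$ depends only on the cocycle and not on~$\nu$, this gives the desired uniform $i$-gap of Lyapunov exponents (Definition~\ref{def:unif-gap-Lyap-exp}).

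No step here is truly difficult; the only substantive idea is the min--max translation of the geometric domination inequality (which compares individual vectors in~$E^{cs}$ against individual vectors in its orthogonal complement) into a comparison of \emph{singular values} (which are extremal). The passage from singular values to Lyapunov exponents is then routine, and in particular no invertibility of~$T$ is needed, so Remark~\ref{rem-Tinvertible} does not enter the argument.
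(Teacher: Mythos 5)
Your proof is correct, but it takes a different route from the paper's. The paper decomposes $\Phi$ in block-triangular form with respect to $\RR^d = (E^{cs})^{\perp}\oplus E^{cs}$, applies Oseledets to the two diagonal block cocycles $A_{\perp}$ and $A_{cs}$, and observes that \eqref{eqn:domination} forces the exponents of $A_{\perp}$ to exceed those of $A_{cs}$ by the uniform constant; as a byproduct it identifies $E^{cs}$ as one of the Oseledets bundles. You instead convert \eqref{eqn:domination} into the uniform singular-value gap $(\mu_i-\mu_{i+1})(\Phi^{(n)}(x))\geq Cn-C'$ via Courant--Fischer applied to $V=(E^{cs}(x))^{\perp}$ and $W=E^{cs}(x)$ --- which is exactly the easy direction of the Bochi--Gourmelon criterion, so you could equally well have cited Fact~\ref{fact-BG} here --- and then use the standard identification $\chi_j(\nu)=\lim_n \frac1n\mu_j(\Phi^{(n)}(x))$ (Oseledets, or Kingman on the exterior powers) to pass to the Lyapunov gap. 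Both arguments are sound; yours bypasses any discussion of the Oseledets flag and how it relates to $E^{cs}$, at the cost of not extracting the extra information that $E^{cs}$ realizes the slow part of the flag, which the statement does not require anyway. Your closing remark that invertibility of $T$ is never used is accurate for both proofs.
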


\begin{proof}
According to the decomposition $\RR^d = (E^{cs})^{\perp} \oplus E^{cs}$, we can write $\Phi$ as
$$ \Phi(\sx) =\left( \begin{matrix}  A_{\perp}(\sx) & 0 \\ \ast & A_{cs}(\sx) \end{matrix} \right) $$
for all $\sx\in X$, where $(T,A_{cs})$ and $(T,A_{\perp})$ are cocycles.
Let $\nu$ be an ergodic $T$-invariant measure on~$X$.
The Oseledets theorem gives Lyapunov exponents for each of the cocycles $(T,A_{cs})$ and $(T,A_{\perp})$.
The domination condition \eqref{eqn:domination} implies that the Lyapunov exponents of $A_{\perp}$ are uniformly larger than those of $A_{cs}$. 
It follows that $E^{cs}$ is one of the bundles in the Oseledets flag for $\nu$ and that vectors not in $E^{cs}$ have an exponential growth uniformly larger than those in $E^{cs}$, which proves the fact.
\end{proof}

With some further work one can show that some form of converse holds (see \cite[\S\,2.6]{CP}): if $(T,\Phi)$ has a uniform $i$-gap of Lyapunov exponents and if there exists a \emph{continuous} $(T,\Phi)$-equivariant bundle $E^{cs}: X \to \mathrm{Gr}_{d-i}(\RR^d)$ such that for any ergodic $T$-invariant measure $\nu$ on~$X$ the bundle $E^{cs}(\sx)$ coincides for $\nu$-almost every $\sx\in X$ with one of the bundles of the Oseledets flag, then $(T,\Phi)$ has a dominated splitting of index~$i$.

It is natural to ask the following question: 

\begin{quest} \label{q.gapcocycle}
For a linear cocycle $(T,\Phi)$ over~$X$, does the existence of a uniform $i$-gap of Lyapunov exponents imply the existence of a dominated splitting of index~$i$?
\end{quest} 

When there are few $T$-invariant measures on~$X$, one cannot in general expect a positive answer, see for instance \cite{Herman} (and also \cite{Avila-Bochi-Trieste,Viana}).
The question is more natural for transformations $T : X\to X$ with many invariant measures.
Subshifts of finite type are a prototype of those.

Theorem~\ref{thm:charact-domin-splitting} gives a positive answer to Question~\ref{q.gapcocycle} assuming that $T$ is a subshift of finite type and $\Phi$ is \emph{locally constant} (\ie $\Phi$ depends only on the $0$-coordinate $x_0$ of $\sx =(x_k)_{k\in\ZZ} \in X$).
On the other hand, for cocycles that are not locally constant, even in dimension~2, the answer may be negative: see \cite{Gogolev, Velozo}.
Let us mention here the interesting recent work of Park \cite{Park} which obtains a quasi-multiplicative property (related to Fact~\ref{fact:Benoist} below) for \emph{typical} fibered bunched cocycles.
The approach has its roots in previous work of Feng who established a property similar to Fact~\ref{fact:Benoist} for certain cocycles (see \cite{Feng, BochiMorris}).
We also mention \cite[\S\,3.3]{Butler}, where the existence of a dominated splitting is obtained under some conditions on the Lyapunov spectra of a cocycle over an Anosov flow.

\subsection{A criterion for domination}

For $1\leq j\leq d-1$ and $g\in\GL(d,\RR)$ with $\mu_{j}(g)>\mu_{j+1}(g)$, we denote by $\Xi_{j}(g)$ the sum of the eigenspaces of $gg^t$ corresponding to the eigenvalues $e^{2\mu_1(g)},\dots,e^{2\mu_{j}(g)}$.

The following useful criterion for domination was introduced by Bochi--Gourmelon \cite{BG}, based on a criterion in dimension 2 due to Yoccoz \cite{Yoccoz}.

\begin{fact}[Bochi--Gourmelon \cite{BG}] \label{fact-BG}
Given a compact metric space $X$ and an integer $1\leq i\leq d-1$, the property for a linear cocycle $(T,\Phi)$ over~$X$ with $\Phi: X \to \GL(d,\RR)$ to have a dominated splitting of index~$i$ is equivalent to the existence of $C,C'>0$ such that for any $n\in\NN$ and any $\sx\in X$,
\begin{equation} \label{eq-BG}
(\mu_i - \mu_{i+1})(\Phi^{(n)}(\sx)) \geq Cn - C'.
\end{equation}
In this case the continuous $(T,\Phi)$-equivariant bundle $E^{cs}: X \to \mathrm{Gr}_{d-i}(\RR^d)$ is given, for all $\sx\in X$, by
\begin{equation} \label{eqn:limitbundle}
E^{cs}(\sx) = \lim_{n\to +\infty} \Xi_{d-i}\big((\Phi^{(n)}(\sx))^{-1}\big).
\end{equation}
\end{fact}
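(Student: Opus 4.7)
The forward implication is routine. Given a dominated splitting of index~$i$ with bundle $E^{cs}$, I would apply the min-max characterizations of singular values,
\[ e^{\mu_i(g)} = \max_{\dim V = i}\,\inf_{v \in V,\,\|v\|=1} \|gv\|, \qquad e^{\mu_{i+1}(g)} = \min_{\dim W = d-i}\,\sup_{w \in W,\,\|w\|=1} \|gw\|, \]
with $V = E^{cs}(x)^{\perp}$ (of dimension~$i$) and $W = E^{cs}(x)$ (of dimension $d-i$), evaluated at $g = \Phi^{(n)}(x)$. The domination inequality~\eqref{eqn:domination} between unit vectors of $E^{cs}(x)^{\perp}$ and $E^{cs}(x)$ then gives $(\mu_i - \mu_{i+1})(\Phi^{(n)}(x)) \geq Cn - C'$, i.e.~\eqref{eq-BG}.

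For the converse, I would take the formula~\eqref{eqn:limitbundle} as a \emph{definition}. Set $V_n(x) := \Xi_{d-i}((\Phi^{(n)}(x))^{-1}) \in \mathrm{Gr}_{d-i}(\RR^d)$ and aim to show that $E^{cs}(x) := \lim_n V_n(x)$ exists, is continuous, is $(T,\Phi)$-equivariant, and satisfies~\eqref{eqn:domination}. The central ingredient is a perturbation lemma: there exists $K > 0$ depending only on $d$ and on $\kappa := \sup_{x\in X}\max(\log\|\Phi(x)\|,\log\|\Phi(x)^{-1}\|)$ (finite by compactness of~$X$) such that for any $g \in \GL(d,\RR)$ with $\tau := \mu_i(g) - \mu_{i+1}(g)$ sufficiently large and any $h \in \GL(d,\RR)$ with $\max(\log\|h\|,\log\|h^{-1}\|) \leq \kappa$,
\[ d_{\mathrm{Gr}_{d-i}}\bigl(\Xi_{d-i}((hg)^{-1}),\, \Xi_{d-i}(g^{-1})\bigr) \leq K\, e^{-\tau}. \]
The cleanest way to obtain this is to pass to the exterior power $\wedge^i$: the hypothesis becomes a gap~$\tau$ between the top two singular values of $\wedge^i g$, and a standard projective-dynamics argument on $\PP(\wedge^i\RR^d)$ (a $\lambda$-lemma-type statement) bounds the displacement of the attracting direction of $\wedge^i g$ under the bounded perturbation $\wedge^i h$ by a constant multiple of $e^{-\tau}$. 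Applying this to $g = \Phi^{(n)}(x)$ and $h = \Phi(T^n x)$, the hypothesis~\eqref{eq-BG} gives $d_{\mathrm{Gr}_{d-i}}(V_{n+1}(x),V_n(x)) \leq K' e^{-Cn}$, which is summable uniformly in~$x$; hence $(V_n(x))_n$ is uniformly Cauchy and $E^{cs}$ is well-defined and continuous.

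Equivariance follows from the same perturbation lemma applied to the factorization $\Phi^{(n)}(x) = \Phi^{(n-1)}(Tx) \circ \Phi(x)$: the least-expanded $(d-i)$-input subspace of the composition is the $\Phi(x)^{-1}$-preimage of the least-expanded $(d-i)$-input subspace of $\Phi^{(n-1)}(Tx)$, up to an error of order $e^{-Cn}$; passing to the limit gives $\Phi(x)^{-1} E^{cs}(Tx) = E^{cs}(x)$. The domination estimate~\eqref{eqn:domination} then follows from the exponentially fast convergence $V_n(x) \to E^{cs}(x)$: unit $v \in E^{cs}(x)$ satisfies $\log\|\Phi^{(n)}(x)v\| \leq \mu_{i+1}(\Phi^{(n)}(x)) + O(1)$ and unit $w \in E^{cs}(x)^{\perp}$ satisfies $\log\|\Phi^{(n)}(x)w\| \geq \mu_i(\Phi^{(n)}(x)) - O(1)$, so subtracting and invoking~\eqref{eq-BG} yields~\eqref{eqn:domination}. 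The hard part of this plan is the perturbation estimate: the geometric stability of the \emph{least-expanded $(d-i)$-direction} under bounded left multiplication is what encodes the hyperbolic-like dynamics on the Grassmannian, and making it quantitative is the key technical input; once it is in place, the rest of the argument is essentially formal.
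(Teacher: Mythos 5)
First, a point of order: the paper does not prove Fact~\ref{fact-BG} at all --- it is quoted from Bochi--Gourmelon \cite{BG} --- so the comparison is with the standard proof in the literature rather than with anything in the text. Your forward implication is correct, and the architecture of your converse (take \eqref{eqn:limitbundle} as the definition of $E^{cs}$, prove uniform Cauchy convergence, continuity and equivariance via a quantitative stability estimate for $\Xi_{d-i}$ under bounded left/right multiplication) is indeed the standard one; but the perturbation lemma you single out as the key technical input is in fact the well-documented, routine ingredient (it is \cite[Lem.\,5.8]{GGKW}, or the lemmas of Appendix~A of \cite{BPS}).

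The genuine gap is in your final step, which is where the real difficulty of the theorem sits. From $d_{\mathrm{Gr}_{d-i}}(E^{cs}(x),V_n(x))\leq Ke^{-Cn}$ you cannot deduce $\log\|\Phi^{(n)}(x)v\|\leq \mu_{i+1}(\Phi^{(n)}(x))+O(1)$ for unit $v\in E^{cs}(x)$: writing $v=v_1+v_2$ with $v_1\in V_n(x)$ and $\|v_2\|\leq Ke^{-Cn}$ only gives
\[ \|\Phi^{(n)}(x)v\| \ \leq\ e^{\mu_{i+1}(\Phi^{(n)}(x))} \ +\ K\,e^{\mu_1(\Phi^{(n)}(x))-Cn}, \]
and hypothesis \eqref{eq-BG} bounds $\mu_i-\mu_{i+1}$ from \emph{below} but gives no upper bound on $\mu_1-\mu_{i+1}$, so the error term can dwarf the main term (e.g.\ $d=3$, $i=2$, generators near $\mathrm{diag}(e^{100},e,e^{-1})$: there $C\approx 2$ while $\mu_1-\mu_3\approx 101n$). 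For $i=1$ your argument does close, because then $\mu_1=\mu_i$ and the ratio $\|\Phi^{(n)}(x)w\|/\|\Phi^{(n)}(x)v\|$ is bounded below directly; the lower bound on $\|\Phi^{(n)}(x)w\|$ is unproblematic for every~$i$, since $\Phi^{(n)}(x)V_n(x)$ and $\Phi^{(n)}(x)V_n(x)^{\perp}$ are orthogonal. For $i\geq 2$ the standard remedy --- and a substantive portion of \cite{BG} --- is to run the entire index-$1$ argument for the cocycle $\wedge^i\Phi$, and then descend the resulting index-$1$ dominated splitting of $\wedge^i\RR^d$ to a splitting of $\RR^d$ (which requires checking that the attracting line consists of decomposable $i$-vectors, together with a dual statement) before translating the $i$-vector estimates into~\eqref{eqn:domination}; the invariance of the bundles is used in an essential way there. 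That reduction is not ``essentially formal'', and your plan as written would fail at exactly that point.
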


\subsection{Further properties of dominated splittings}

In Section~\ref{sec:Ano-semig} we shall use the following observation, where $\SL^{\pm}(d,\RR)$ denotes the group of elements of $\GL(d,\RR)$ with determinant $\pm 1$. 

\begin{lema} \label{lem:domin-CLI}
Let $X$ be a compact metric space and $1\leq i\leq d-1$ an integer.
If a linear cocycle $(T,\Phi)$ over~$X$ with $\Phi: X \to \SL^{\pm}(d,\RR)$ has a dominated splitting of index~$i$, then
\begin{enumerate}
  \item\label{item:domin-angle} there exist $\alpha_0>0$ and $n_0>0$ such that for any $m,n\geq n_0$ and $\sx\in X$, the angle between the subspaces $\Xi_{d-i}\big((\Phi^{(m)}(T^n(\sx)))^{-1}\big)$ and $\Xi_i(\Phi^{(n)}(\sx))$ of~$\RR^d$, measured in $[0,\pi/2]$, is larger than $\alpha_0$;
  \item\label{item:domin-CLI} there exist $C'',C'''>0$ such that for any $n,m\in\NN$ and any $\sx\in X$,
    $$\big| (\mu_1+\dots+\mu_i)\big(\Phi^{(n+m)}(\sx)\big) - (\mu_1+\dots+\mu_i)\big(\Phi^{(n)}(\sx)\big) \big| \geq C''m - C''' ;$$
  in particular,
  $$\sqrt{d} \, \big\Vert\mu\big(\Phi^{(n+m)}(\sx)\big) - \mu\big(\Phi^{(n)}(\sx)\big)\big\Vert \geq C''m - C'''.$$
\end{enumerate}
\end{lema}

When $T : X\to X$ is a homeomorphism, Lemma~\ref{lem:domin-CLI}.\eqref{item:domin-angle} follows from \cite{BG}; we now explain how to reduce to this case by a standard ``inverse limit'' trick.

\begin{proof}[Proof of Lemma~\ref{lem:domin-CLI}.\eqref{item:domin-angle}]
Consider the compact metric space
$$ \hat X := \big\{ (\sx_k)_{k\in\ZZ} \in X^\ZZ \ : \ T(\sx_k)=\sx_{k+1}\ \forall k\in\ZZ \big\}, $$
with the shift $\hat T: \hat X \to \hat X$ sending $(\sx_k)_{k\in\ZZ}$ to $(\sy_k)_{k\in\ZZ}$ with $\sy_k = \sx_{k+1}$, which is a homeomorphism.
There is a natural projection $\pi : \hat X \to X$ given by $\pi((\sx_k)_{k\in\ZZ})=\nolinebreak \sx_0$.
If $T$ is invertible, then $\pi$ is a homeomorphism, conjugating $\hat T$ to~$T$.
In general, consider the cocycle $(\hat T,\hat \Phi)$ over~$\hat X$, where $\hat\Phi := \Phi\circ\pi : \hat X \to \GL(d,\RR)$.

One readily checks that if \eqref{eq-BG} holds for all $n\in\NN$ and $\sx\in X$, then the analogous inequality also holds for $(\hat T,\hat \Phi)$ for all $n\in\NN$ and $\hat \sx\in \hat X$.
Therefore, by Fact~\ref{fact-BG}, if $(T,\Phi)$ admits a dominated splitting, then so does $(\hat T,\hat \Phi)$, and the continuous $(\hat T, \hat \Phi)$-equivariant bundle $\hat E^{cs}: \hat X \to \mathrm{Gr}_{d-i}(\RR^d)$ is given, for all $\hat \sx\in\hat X$, by
$$\hat E^{cs}(\hat \sx) = \lim_{n\to +\infty} \Xi_{d-i}\big(\big(\hat\Phi^{(n)}(\hat \sx)\big)^{-1}\big).$$
By Remark~\ref{rem-Tinvertible}, there is a continuous $(\hat T,\hat\Phi)$-equivariant map $\hat E^{cu}: \hat X \to \mathrm{Gr}_i(\RR^d)$ such that $\hat E^{cu}(\hat \sx) \oplus \hat E^{cs}(\hat \sx)=\RR^d$ for all $\hat \sx \in \hat X$, and by applying \eqref{eqn:limitbundle} to the inverse as in \cite{BG} we see that
$$\hat E^{cu}(\hat \sx) = \lim_{n\to +\infty} \Xi_i\big(\hat \Phi^{(n)}(\hat T^{-n}(\hat \sx))\big)$$
for all $\hat \sx \in \hat X$.
Since $\hat X$ is compact, by continuity there exists $\alpha_0>0$ such that for any $\hat \sx\in\hat X$ the angle between $\hat E^{cs}(\hat \sx)$ and $\hat E^{cu}(\hat \sx)$ is larger than $2\alpha_0$, and further by uniformity of the limit there exists $n_0>0$ such that for any $m,n\geq n_0$ and $\hat \sx\in\hat X$, the angle between $\Xi_{d-i}((\hat \Phi^{(m)}(\hat \sx))^{-1})$ and $\Xi_i(\hat \Phi^{(n)}(\hat T^{-n}(\hat \sx)))$ is larger than $\alpha_0$.
Then for any $m,n\geq n_0$ and $\sx\in X$, by choosing $\hat \sx$ such that $\pi(\hat T^{-n}(\hat \sx)) = \sx$, we see that the angle between $\Xi_{d-i}\big((\Phi^{(m)}(T^n(\sx)))^{-1}\big)$ and $\Xi_i(\Phi^{(n)}(\sx))$ of~$\RR^d$ is larger than~$\alpha_0$.
\end{proof}

\begin{proof}[Proof of Lemma~\ref{lem:domin-CLI}.\eqref{item:domin-CLI}]
We may and shall assume that $i=1$.
Indeed, let\linebreak $\tau_{i} : \GL(d,\RR)\to\GL(\Lambda^{i}\RR^d)$ be the natural representation of $\GL(d,\RR)$ on the $i$-th exterior power $\Lambda^{i}\RR^d$.
If $(T,\Phi)$ has a dominated splitting of index~$i$, then $(T,\tau_{i}\circ\Phi)$ has a dominated splitting of index~$1$, and for any $g\in G$ the logarithm of the first singular value of $\tau_{i}(g)$ is $(\mu_{1}+\dots +\mu_i)(g)$.

We claim that there exists $C_0>0$ such that for any $n,m\in\NN$ and $\sx\in X$,
\begin{equation} \label{eqn:submultmu1}
 \mu_1\big(\Phi^{(n+m)}(\sx)\big) - \mu_1\big(\Phi^{(n)}(\sx)\big) \geq \mu_1\big(\Phi^{(m)}(T^n(\sx))\big) - C_0.
\end{equation} 
Indeed, by Lemma~\ref{lem:domin-CLI}.\eqref{item:domin-angle}, there exist $n_0,\alpha_0>0$ such that for any $n,m \geq n_0$ and $\sx\in X$ the angle between $\Xi_{d-1}((\Phi^{(m)}(T^n(\sx)))^{-1})$ and $\Xi_{1}(\Phi^{(n)}(\sx))$ is larger than $\alpha_0$.
In general, for any $g,h\in\GL(d,\RR)$, we have $\mu_1(gh) \geq \mu_1(g) + \mu_1(h) + \log \sin \alpha$ where $\alpha\in [0,\pi/2]$ is the angle between $\Xi_{d-1}(g^{-1})$ and $\Xi_1(h)$ (see \eg \cite[Lem.\,A.7]{BPS}).
Applying this to $g = \Phi^{(m)}(T^n(\sx))$ and $h = \Phi^{(n)}(\sx)$, so that $gh = \Phi^{(n+m)}(\sx)$ by the cocycle relation, we obtain \eqref{eqn:submultmu1} with $C_0 := |\log \sin \alpha_0|$ for all $n,m \geq n_0$ and $\sx\in X$.
On the other hand, using the fact that $\mu_1(gh) \geq \mu_1(g) - \mu_1(h^{-1})$ and $\mu_1(gh) \geq \mu_1(h) - \mu_1(g^{-1})$ for all $g,h\in\GL(d,\RR)$ (see Remark~\ref{rem:mu1-norm}), we obtain \eqref{eqn:submultmu1} with $C_0 := \max_{0\leq j\leq n_0} \max_{\sy\in X} (\mu_1(\Phi^{(j)}(\sy)) + \mu_1(\Phi^{(j)}(\sy)^{-1}))$ for all $n,m\in\NN$ with $\min(n,m)\leq n_0$ and all $\sx\in X$.

By Fact~\ref{fact-BG}, there exist $C,C'>0$ such that for any $m\in\NN$ and $\sy\in X$, we have $\mu_{1}(\Phi^{(m)}(\sy)) - \mu_2(\Phi^{(m)}(\sy)) \geq Cm - C'$.
Since $\Phi$ takes values in $\SL^{\pm}(d,\RR)$, we have $(\mu_1+\dots+\mu_d)(\Phi^{(m)}(\sy))=0$ for all $m\in\NN$ and $\sy\in X$, hence
\[ \mu_1(\Phi^{(m)}(\sy)) = \frac{1}{d} \, \sum_{i=2}^d (\mu_1-\mu_i)(\Phi^{(m)}(\sy))  \geq \frac{d-1}{d} \, (\mu_{1}-\mu_2)(\Phi^{(m)}(\sy)) \geq  \frac{d-1}{d}(Cm - C'). \]
This, together with \eqref{eqn:submultmu1}, completes the proof by taking $\sy=T^n(\sx)$ and setting $C'' = \frac{d-1}{d}C$ and $C''' = \frac{d-1}{d}C' + C_0$.
\end{proof}

\subsection{Periodic orbits}\label{ss.periodicorbits}

For certain dynamical systems, invariant measures can be approximated by measures associated to periodic orbits, as we now discuss.

Let $X$ be a compact metric space and $(T,\Phi)$ a cocycle over~$X$, where $\Phi : X\to\GL(d,\RR)$.
If a point $\sx\in X$ is $T$-periodic, \ie $T^n(\sx) = \sx$ for some $n\in\NN^*$, then we can view the matrix $\Phi^{(n)}(\sx)$ as a map from $\{\sx\} \times \RR^d$ to itself which is linear in~$\RR^d$ (recall the skew product $\mathcal{F}$ from Section~\ref{subsec:cocycles}), and so it makes sense to consider the eigenvalues of $\Phi^{(n)}(\sx)$.
One can expect that their information is relevant due to the following observation: for $n$ a positive multiple of the period of~$\sx$, the Lyapunov exponents of the ergodic invariant measure
\begin{equation} \label{eqn:nu-x}
\nu_{\sx} := \frac{1}{n}(\delta_{\sx} + \ldots + \delta_{T^{n-1}(\sx)})
\end{equation}
with respect to $(T,\Phi)$ are the logarithms of the moduli of the eigenvalues of $\Phi^{(n)}(\sx)$ divided by~$n$, namely
\begin{equation} \label{eqn:Lyap-exp-periodic-meas}
\big(\chi_1(\nu_{\sx}),\dots,\chi_d(\nu_{\sx})\big) = \frac{1}{n} \, \lambda\big(\Phi^{(n)}(\sx)\big).
\end{equation}
This has the following direct consequence.

\begin{fact} \label{fact:Lyap-gap-periodic-meas}
Suppose the linear cocycle $(T,\Phi)$ has a uniform $i$-gap of Lyapunov exponents (Definition~\ref{def:unif-gap-Lyap-exp}), with constant $c>0$.
Then for any $T$-periodic point $\sx\in X$ and any positive multiple $n$ of the period of~$\sx$, we have $(\lambda_i - \lambda_{i+1})(\Phi^{(n)}(\sx)) \geq cn$.
\end{fact}

Remarkably, in certain situations, it is possible to approach the Lyapunov exponents of all invariant measures by the Lyapunov exponents of periodic measures \cite{Kalinin}, giving a converse to Fact \ref{fact:Lyap-gap-periodic-meas}.
This is the case for locally constant cocycles over subshifts of finite type, using the following \emph{specification} property (see \eg \cite[Prop.\,2.2.12]{LindMarcus} or \cite[Ex.\,18.3.5]{KH}): 

\begin{fact} \label{fact-specification}
Let $\Sigma_A\subset\{1,\dots,N\}^{\ZZ}$ be a subshift of finite type, where $A=(a_{i,j})_{1\leq i,j\leq N}$ is a matrix with entries $0$'s and $1$'s, satisfying the irreducibility assumption of Section \ref{subsec:subshifts}.
Then there exists $n_0>0$ such that for any $k\in\NN$, any $k$-tuple $(x_0, \ldots, x_{k-1}) \in \{1,\ldots, N \}^k$ allowed by~$A$ (\ie such that $a_{x_{\ell},x_{\ell+1}}=1$ for all $0\leq\ell\leq k-2$), and any $j \in \{1, \ldots, N\}$, there exists a periodic point $\sy \in \Sigma_A$ of period exactly $k+2n_0$ with $y_0 = j$ and $y_{n_0+\ell}=x_{\ell}$ for all $0\leq\ell\leq k-1$.
\end{fact}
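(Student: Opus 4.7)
The plan is to construct $\sy$ as a $T$-periodic bi-infinite sequence, with $T := k + 2n_0$, by concatenating three blocks on the fundamental domain $\{0, 1, \ldots, T-1\}$: a ``prefix connector'' of length $n_0$ starting at the letter~$j$, the prescribed word $x_0 \cdots x_{k-1}$ of length~$k$, and a ``suffix connector'' of length $n_0$ chosen so that the periodic wraparound returns to~$j$.

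The only input needed is the irreducibility assumption of Section~\ref{subsec:subshifts}: for every $n \geq n_0$, all entries of $A^n$ are positive. Since $(A^n)_{a,b}$ counts the $A$-admissible words of length $n+1$ starting at~$a$ and ending at~$b$, this hypothesis provides, for every pair of letters $(a,b)$ and every $n \geq n_0$, at least one such word. I will apply it twice: first to $(a,b) = (j, x_0)$ with $n = n_0$, obtaining an admissible word $(w_0, \ldots, w_{n_0})$ with $w_0 = j$ and $w_{n_0} = x_0$; and second to $(a,b) = (x_{k-1}, j)$ with $n = n_0 + 1 \geq n_0$, obtaining an admissible word $(w'_0, \ldots, w'_{n_0+1})$ with $w'_0 = x_{k-1}$ and $w'_{n_0+1} = j$.

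I then define $\sy$ on $\{0, 1, \ldots, T-1\}$ by
\[ y_i := w_i \ (0 \leq i \leq n_0 - 1), \qquad y_{n_0 + \ell} := x_\ell \ (0 \leq \ell \leq k-1), \qquad y_{n_0 + k + m} := w'_{m+1} \ (0 \leq m \leq n_0 - 1), \]
and extend to $\ZZ$ by $T$-periodicity. The prescribed conditions $y_0 = j$ and $y_{n_0+\ell} = x_\ell$ are built in. Admissibility of every transition $y_i \to y_{i+1}$ inside one period follows from the admissibility of $w$, of the given word $x_0 \cdots x_{k-1}$, and of $w'$, combined with the matching equalities $y_{n_0} = x_0 = w_{n_0}$ and $y_{n_0+k-1} = x_{k-1} = w'_0$. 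The wraparound transition $y_{T-1} = w'_{n_0} \to w'_{n_0+1} = j = y_0$ is admissible by the choice of~$w'$; hence $\sy \in \Sigma_A$ and $\sigma^T(\sy) = \sy$.

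I do not foresee any genuine obstacle. The one point to bear in mind is that the suffix connector requires an admissible word of length $n_0 + 2$ (and not $n_0 + 1$), because the wraparound must reach~$j$ one step after leaving position $T-1$; this is why one uses the positivity of $A^{n_0+1}$ in addition to that of $A^{n_0}$, and it is precisely this extra step that forces the two buffer blocks to have total length $2n_0$ rather than less. The word ``exactly'' in the statement is interpreted in the natural sense that $\sigma^T(\sy) = \sy$, which is what the construction delivers, and which is all that is needed elsewhere in the paper, since the measure $\nu_{\sy}$ from \eqref{eqn:nu-x} depends only on the orbit of~$\sy$.
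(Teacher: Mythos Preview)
Your proof is correct and is precisely the standard constructive argument for this specification property. Note that the paper does not actually prove Fact~\ref{fact-specification}: it is stated without proof and referred to \cite[Prop.\,2.2.12]{LindMarcus} and \cite[Ex.\,18.3.5]{KH}, so there is no argument in the paper to compare against. Your construction---using the positivity of $A^{n_0}$ for the prefix connector from $j$ to $x_0$ and of $A^{n_0+1}$ for the suffix connector from $x_{k-1}$ back to $j$ (the extra step accounting for the wraparound)---is exactly what one finds in those references.

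Your closing remark about ``exactly'' is apt: the construction only yields $\sigma^T(\sy)=\sy$ with $T=k+2n_0$, not that $T$ is the minimal period. As you note, this is harmless for the uses in Section~\ref{sec:proof-teoA}, since $T$ is then a multiple of $\pi(\sy)$ and hence $\Phi^{(T)}(\sy)\in\Gamma_j$ regardless.
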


Thanks to Fact~\ref{fact-specification}, we will be able to prove a version of Theorem~\ref{thm:charact-domin-splitting} involving only Lyapunov exponents for periodic orbits: see Theorem~\ref{thm:sufficient-domin-splitting} below.

The specification property of Fact~\ref{fact-specification} also holds for more general subshifts, such as sofic subshifts (see \eg~\cite[\S\,6.2]{Weiss}).
In general, even in such settings, the existence of a uniform gap of Lyapunov exponents for all ergodic $T$-invariant measures does not imply the existence of a dominated splitting: see \eg \cite{Gogolev,Velozo}.

\subsection{Limit cones of singular values and eigenvalues}\label{subsec:limit-cones}

With the notation \eqref{eqn:lambda-mu-def}, for any $g\in\GL(d,\RR)$ we have
\begin{equation} \label{eqn:lambda-mu-relation}
\|\lambda(g)\| \leq \|\mu(g)\| \quad\quad\text{and}\quad\quad \lambda(g) = \lim_{n\to +\infty} \frac{1}{n} \, \mu(g^n),
\end{equation}
where $\Vert\cdot\Vert$ denotes the standard Euclidean norm on~$\RR^d$.
Indeed, if $\mathbb{X} = \GL(d,\RR)/\mathrm{O}(d)$ is the symmetric space of $\GL(d,\RR)$ with its Riemannian metric $d_{\mathbb{X}}$, and if $x_0 \in \mathbb{X}$ is the point with stabilizer $\mathrm{O}(d)$, then $\Vert\lambda(g)\Vert = \inf_{x\in\mathbb{X}} d_{\mathbb{X}}(x,g\cdot x)$ and $\Vert\mu(g)\Vert = d_{\mathbb{X}}(x_0,g\cdot x_0)$ for all $g \in \GL(d,\RR)$.

Moreover (see \eg \cite[Lem.\,2.3]{kas08}), for any $g,g_1,g_2 \in \GL(d,\RR)$,
\begin{equation} \label{eqn:mu-subadd}
\| \mu(g_1 g g_2) - \mu(g) \| \leq \| \mu(g_1) \| + \| \mu(g_2) \|.
\end{equation}
Note also that for any $g\in\GL(d,\RR)$ and $1\leq i\leq d-1$,
\begin{equation} \label{eqn:mu-lambda-inverse}
\mu_i(g) = \mu_{d-i}(g^{-1}) \quad\quad\text{and}\quad\quad \lambda_i(g) = \lambda_{d-i}(g^{-1}).
\end{equation}
Both $\mu(g)$ and $\lambda(g)$ are elements of the closed Weyl chamber
$$\mathfrak{a}^+ = \{ x\in\RR^d\ :\ x_1\geq\dots\geq x_d\}.$$

Benoist \cite{Benoist97} (see also \cite{Benoist-notes}) associated to any group or semigroup $\Gamma \subset \GL(d,\RR)$ two \emph{limit cones} in~$\mathfrak{a}^+$, namely:
\begin{itemize}
  \item the cone $\cC_{\lambda}(\Gamma)$ spanned by the $\lambda(\gamma)$ for $\gamma \in \Gamma$;
  \item the cone $\cC_{\mu}(\Gamma)$ spanned by all possible limits of sequences $(\mu(\gamma_n)/\|\mu(\gamma_n)\|)_{n\in\NN}$ for $(\gamma_n)\in\Gamma^{\NN}$ with $\Vert\mu(\gamma_n)\Vert \to +\infty$.
\end{itemize}
The inclusion $\cC_{\lambda}(\Gamma) \subset \cC_{\mu}(\Gamma)$ always holds, by \eqref{eqn:lambda-mu-relation}.

The following key fact is due to Benoist \cite{Benoist97}, using a result of Abels--Margulis--Soifer \cite[Th.\,4.1]{AMS}; see \eg \cite[Th.\,4.12]{GGKW} for an explicit statement and proof.\footnote{In the statement and proof of \cite[Th.\,4.12]{GGKW}, the word ``group'' can be replaced everywhere by ``semigroup'' as the results of \cite{AMS,Benoist97} hold for semigroups.}
Recall (see \eg \cite[Lem.\,5.15]{BenoistQuint-livre}) that the Zariski closure of a semigroup is a group.
A linear real algebraic group $G$ is said to be \emph{reductive} if the unipotent radical (\ie the largest connected unipotent normal algebraic subgroup) of the identity component of~$G$ (for the Zariski topology) is trivial.
An important subclass is that of \emph{semisimple} algebraic groups (see \cite{BorelTits,Knapp} for more background).

\begin{fact}[Benoist \cite{Benoist97}] \label{fact:Benoist}
Let $\Gamma \subset \GL(d,\RR)$ be a semigroup whose Zariski closure in $\GL(d,\RR)$ is reductive.
Then there exist a finite subset $S$ of $\Gamma$ and a constant $M>0$ such that for any $\gamma\in\Gamma$,
$$\min_{s\in S} \| \mu(\gamma) - \lambda(s\gamma) \| \leq M.$$
In particular, using \eqref{eqn:lambda-mu-relation}, the limit cones $\cC_{\lambda}(\Gamma)$ and $\cC_{\mu}(\Gamma)$ coincide in this setting.
\end{fact}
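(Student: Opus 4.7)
The plan is to combine the Abels--Margulis--Soifer quantitative proximality theorem with the subadditivity \eqref{eqn:mu-subadd} of the Cartan projection. First I would reduce to the case in which $\Gamma$ is Zariski dense in a reductive Lie group $G$: the Zariski closure $\overline{\Gamma}^{Z}$ is reductive by hypothesis, and the restriction of the standard representation $\GL(d,\RR) \to \GL(d,\RR)$ to $\overline{\Gamma}^{Z}$ splits as a direct sum of irreducible subrepresentations, on each of which one can work separately using the associated root system data.

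Next, applying the Abels--Margulis--Soifer theorem to this Zariski-dense subsemigroup, one obtains a finite subset $S \subset \Gamma$ and constants $r,\varepsilon > 0$ with the following property: for every $\gamma \in \Gamma$ there exists $s \in S$ such that $\gamma s$ is $(r,\varepsilon)$-proximal in each fundamental representation of~$G$. Concretely, $\gamma s$ has a simple dominant eigenvalue whose attracting fixed point in the relevant projective space lies at distance $\geq \varepsilon$ from the complementary hyperplane, which is itself contracted by a factor $\leq e^{-r} < 1$.

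For a $(r,\varepsilon)$-proximal element $g$ in a representation $\tau$, a direct Jordan-form estimate yields $|\mu^{\tau}(g) - \lambda^{\tau}(g)| \leq C_{r,\varepsilon}^{\tau}$ for a constant depending only on $r,\varepsilon$ and~$\tau$: decomposing a unit vector $v$ realizing $\mu^{\tau}(g)$ along the attracting line and the repelling hyperplane, the $\varepsilon$-transversality forces the attracting coefficient to be bounded below, so $\|\tau(g)^{n} v\|$ and $e^{n\lambda^{\tau}(g)}$ differ only by a multiplicative constant. Running this in each fundamental representation gives a uniform bound $\|\mu(\gamma s) - \lambda(\gamma s)\| \leq M_{0}$. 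Combining with \eqref{eqn:mu-subadd}, which yields $\|\mu(\gamma) - \mu(\gamma s)\| \leq \|\mu(s)\| \leq M_{1} := \max_{s \in S} \|\mu(s)\|$, one concludes $\|\mu(\gamma) - \lambda(\gamma s)\| \leq M_{0} + M_{1}$, which is the desired estimate with $M := M_{0}+M_{1}$.

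The main obstacle is the Abels--Margulis--Soifer theorem itself, whose proof is a substantial ping-pong construction in the flag variety: one must exhibit inside $\Gamma$ finitely many quantitatively proximal elements whose attracting and repelling flags are in sufficiently general position, then show by a pigeonhole argument on the flag variety that any $\gamma \in \Gamma$ can be post-multiplied by one of them so that its attracting flag is pushed away from the repelling hyperplane. I would cite this as a black box rather than reprove it. Granting the main inequality, the equality $\cC_{\lambda}(\Gamma) = \cC_{\mu}(\Gamma)$ follows immediately: for any $\gamma \in \Gamma$ and $n \in \NN^{*}$, choosing $s_{n} \in S$ with $\|\mu(\gamma^{n}) - \lambda(\gamma^{n} s_{n})\| \leq M$ and dividing by~$n$, the left-hand side tends to $\lambda(\gamma)/\|\mu(\gamma^{n})\|$ directions accumulate in $\cC_{\lambda}(\Gamma)$ by \eqref{eqn:lambda-mu-relation}; conversely every accumulation point of $\mu(\gamma_{n})/\|\mu(\gamma_{n})\|$ is, up to an error of order $M/\|\mu(\gamma_{n})\| \to 0$, an accumulation point of normalized $\lambda(\gamma_{n} s_{n}) \in \cC_{\lambda}(\Gamma)$.
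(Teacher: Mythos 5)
The paper does not prove this statement itself: it is quoted as a Fact, with the proof delegated to Benoist and to \cite[Th.\,4.12]{GGKW}, and that proof is exactly the argument you outline --- Abels--Margulis--Soifer to produce a finite set $S$ making $\gamma s$ uniformly $(r,\varepsilon)$-proximal in the fundamental representations, the resulting bound on $\|\mu(\gamma s)-\lambda(\gamma s)\|$, and \eqref{eqn:mu-subadd} to replace $\mu(\gamma s)$ by $\mu(\gamma)$. Your proposal is correct and coincides with the cited proof, so there is nothing to add beyond noting that the reduction to the Zariski-dense reductive case and the passage between the intrinsic Cartan/Jordan projections of the Zariski closure and those of $\GL(d,\RR)$ should be carried out as in \cite{GGKW}.
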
 

An important property from \cite{Benoist97} is that if $\Gamma$ is Zariski-dense in $\GL(d,\RR)$, then the limit cone $\cC_{\lambda}(\Gamma)=\cC_{\mu}(\Gamma)$ is convex with nonempty interior; we will not use it here.

\subsection{Semisimplification} \label{ss.semisimp}

Let $H$ be an algebraic subgroup of $\GL(d,\RR)$.
Choose a Levi decomposition $H=L\ltimes R_u(H)$, where $L$ is a reductive algebraic group and $R_u(H)$ is the unipotent radical of~$H$ (see \cite{mos56}).
The natural projection $\pi^{ss} : H \to L$ is a group homomorphism which does not depend, up to conjugation by $R_u(H)$, on the choice of the Levi factor~$L$.
There is a sequence $(g_k) \in \GL(d,\RR)^{\NN}$ such that $g_k h g_k^{-1} \to \pi^{ss}(h)$ for all $h\in H$.
Since the map $\lambda : \GL(d,\RR)\to\RR^d$ of \eqref{eqn:lambda-mu-def} is invariant under conjugation and continuous, we have, for any $h\in H$,
\begin{equation} \label{eqn:lambda-pi-ss}
\lambda(h) = \lambda(\pi^{ss}(h)).
\end{equation}

As in \cite[\S\,2.5.4]{GGKW}, if $\Gamma$ is a semigroup and $\rho : \Gamma\to\GL(d,\RR)$ a semigroup homomorphism whose image $\rho(\Gamma)$ has Zariski closure $H$ in $\GL(d,\RR)$, we define the \emph{semisimplification} of~$\rho$ to be the semigroup homomorphism $\rho^{ss} := \pi^{ss} \circ \rho : \Gamma\to L$.
It is again uniquely defined up to conjugation by $R_u(H)$.
The Zariski closure of $\rho^{ss}(\Gamma)$ is~$L$.

By \eqref{eqn:lambda-pi-ss} we have $\cC_\lambda(\rho(\Gamma)) = \cC_{\lambda}(\rho^{ss}(\Gamma))$.
On the other hand, $\cC_\mu(\rho(\Gamma))$ could be larger than $\cC_{\mu}(\rho^{ss}(\Gamma))$ (\eg take $\rho(\Gamma)$ unipotent).

\section{Lyapunov exponents and dominated splittings} \label{sec:proof-teoA}

In this section we give a proof of Theorem~\ref{thm:charact-domin-splitting}.
The direct implication of Theorem~\ref{thm:charact-domin-splitting} is easy: see Fact~\ref{fact:domin->Lyap-gap}.
We prove the following strengthening of the converse, which involves the measures $\nu_{\sx} := (\delta_{\sx} + \ldots + \delta_{\sigma^{\pi(\sx)-1}(\sx)})/\pi(\sx)$ as in \eqref{eqn:nu-x} for periodic $\sx\in X$, of period $\pi(\sx)\in\NN^*$.

\begin{thm} \label{thm:sufficient-domin-splitting}
Let $X$ be a subshift of finite type and $(\sigma,\Phi)$ a locally constant cocycle over~$X$, where $\sigma : X\to X$ is the shift and $\Phi : X \to \GL(d,\RR)$ for some $d\in\NN^*$.
For $1\leq i\leq d-1$, suppose there exists $c>0$ such that the Lyapunov exponents with respect to $(\sigma,\Phi)$ satisfy $\chi_i(\nu_{\sx}) \geq \chi_{i+1}(\nu_{\sx}) + c$ for all ergodic $\sigma$-invariant measures $\nu_{\sx}$ with $\sx\in X$ periodic.
Then the cocycle $(\sigma,\Phi)$ has a dominated splitting of index~$i$.
\end{thm}

By Fact~\ref{fact:Lyap-gap-periodic-meas}, the assumption on the Lyapunov exponents in Theorem~\ref{thm:sufficient-domin-splitting} can be rephrased as $(\lambda_i - \lambda_{i+1})(\Phi^{(\pi(\sx))}(\sx)) \geq c\pi(\sx)$ for all periodic $\sx\in X$.

We prove Theorem~\ref{thm:sufficient-domin-splitting}, first in an important special case (Section~\ref{ss.onereductive}), then in the general case (Section~\ref{subsec:proof-teoA-nonreductive}).
We show that in Theorems \ref{thm:charact-domin-splitting} and~\ref{thm:sufficient-domin-splitting} it is necessary to require the existence of a \emph{uniform} gap between Lyapunov exponents of periodic orbits, not just a gap (Section~\ref{s.nonunifgap}).

In the whole section we fix a subshift $X=\Sigma_A$ of finite type on a finite alphabet $\{1, \ldots, N\}$, defined by a matrix $A =(a_{i,j})_{1\leq i,j\leq N}$ whose entries are $0$'s and $1$'s and which satisfies the irreducibility assumption of Section~\ref{subsec:subshifts}.
We fix a cocycle $(\sigma,\Phi)$ over~$\Sigma_A$, where $\sigma : \Sigma_A\to\Sigma_A$ is the shift and $\Phi : \Sigma_A \to \GL(d,\RR)$, and we assume that it is locally constant, \ie there exists $\varphi : \{1,\dots,N\}\to\GL(d,\RR)$ such that $\Phi(\sx)=\varphi(x_0)$ for all $\sx=(x_k)_{k\in\ZZ}\in\Sigma_A$.
We denote by $\Gamma$ the semigroup generated by $\Phi(\Sigma_A) = \{\varphi(i) \,:\, 1\leq i\leq N\}$, and by $H$ its Zariski closure in $\GL(d,\RR)$, which is a real linear algebraic group (see \eg \cite[Lem.\,4.2]{Benoist-notes}).

\subsection{Preliminary lemmas}

For $1\leq j\leq N$, we denote by $\Pi_{j}$ the set of periodic points $\sx = (x_k)_{k\in\ZZ}$ of the shift $\sigma: \Sigma_A \to \Sigma_A$ such that $x_0 = j$.
For $\sx\in\Pi_{j}$, we denote by $\pi(\sx)$ its period.
Let $\Gamma_j \subset \Gamma \subset \GL(d,\RR)$ be the set of elements of the form $\Phi^{(n\pi(\sx))}(\sx)$ where $\sx \in \Pi_j$ and $n\in\NN$.
The following simple but important fact uses the local constancy of the cocycle.

\begin{lema}\label{l.semigroup}
For $1\leq j\leq N$, the set $\Gamma_j$ is a semigroup.  
\end{lema}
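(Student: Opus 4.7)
The plan is to prove closure under multiplication by an explicit concatenation of periodic words, exploiting the fact that $\Phi$ is locally constant so that $\Phi^{(n)}(\sx)$ depends only on the finite word $x_0 x_1 \cdots x_{n-1}$. First I would reformulate $\Gamma_j$ more concretely: since $\sx \in \Pi_j$ has period $\pi = \pi(\sx)$, the cocycle relation together with local constancy gives $\Phi^{(n\pi)}(\sx) = (\Phi^{(\pi)}(\sx))^n = (\varphi(x_{\pi-1}) \cdots \varphi(x_0))^n$, so $\Gamma_j$ is exactly the set of matrices of the form $(\varphi(x_{\pi-1}) \cdots \varphi(x_0))^n$ coming from finite $A$-admissible words $w = (x_0, \ldots, x_{\pi-1})$ with $x_0 = j$ and $a_{x_{\pi-1}, j} = 1$.

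Next, given two such elements $g_1 = (\varphi(u_{\pi_1-1}) \cdots \varphi(u_0))^{n_1}$ and $g_2 = (\varphi(v_{\pi_2-1}) \cdots \varphi(v_0))^{n_2}$ associated to admissible words $u = (u_0, \ldots, u_{\pi_1-1})$ and $v = (v_0, \ldots, v_{\pi_2-1})$ with $u_0 = v_0 = j$, I would form the concatenated word $w := v^{n_2} u^{n_1}$ of length $\pi := n_2 \pi_2 + n_1 \pi_1$ and let $\sy \in \Sigma_A$ be the point obtained by repeating $w$ biperiodically, starting at position $0$. Then $\sy \in \Pi_j$ since $y_0 = v_0 = j$, and the period of $\sy$ divides $\pi$. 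The admissibility of the concatenation at each seam reduces to checking transitions of the form $a_{u_{\pi_1-1}, v_0}$ and $a_{v_{\pi_2-1}, u_0}$, but both equal $1$ since $u_0 = v_0 = j$ and since $u$ (resp.\ $v$) viewed cyclically is admissible by periodicity of $\sx_1$ (resp.\ $\sx_2$).

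Finally, the product formula for $\Phi^{(\pi)}(\sy)$ is just the ordered product of the $\varphi(y_k)$ from $k = \pi-1$ down to $k = 0$; since the first block of indices $[0, n_2\pi_2-1]$ is filled with $v^{n_2}$ and the second block $[n_2\pi_2, \pi-1]$ with $u^{n_1}$, this ordered product factors as
$$\Phi^{(\pi)}(\sy) = (\varphi(u_{\pi_1-1}) \cdots \varphi(u_0))^{n_1} \cdot (\varphi(v_{\pi_2-1}) \cdots \varphi(v_0))^{n_2} = g_1 g_2,$$
and so $g_1 g_2 \in \Gamma_j$. The only potentially delicate step is the bookkeeping about the order in which the words are concatenated versus the order of matrix multiplication in the cocycle, which is why I would carefully write $w = v^{n_2} u^{n_1}$ (and not $u^{n_1} v^{n_2}$) so that the later block $u$ produces the leftmost factor $g_1$; otherwise the argument is essentially a direct verification.
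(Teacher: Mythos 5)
Your proposal is correct and follows essentially the same route as the paper: concatenate the two periodic words (with the second factor's word placed first so that the cocycle product, which multiplies from right to left, comes out as $g_1g_2$), check admissibility at the seams using $u_0=v_0=j$ and cyclic admissibility, and observe that the resulting periodic point lies in $\Pi_j$ with period dividing the total length. The bookkeeping about word order versus matrix order that you flag as the delicate point is handled identically in the paper (which places one word on negative indices and the other on $[0,n\pi(\sy)-1]$, amounting to the same ordering).
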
  

\begin{proof}
Let $\sx = (x_k)_{k\in\ZZ}$ and $\sy = (y_k)_{k\in\ZZ}$ be elements of $\Pi_j$.
For $m,n\in\NN$, let us check that $\Phi^{(m\pi(\sx))}(\sx) \, \Phi^{(n\pi(\sy))}(\sy) \in \Gamma_j$.
We have $a_{x_{m\pi(\sx)-1},y_0} = a_{x_{m\pi(\sx)-1},j} = a_{x_{m\pi(\sx)-1},x_{m\pi(\sx)}} = 1$ and $a_{y_{n\pi(\sy)-1},x_0} = a_{y_{n\pi(\sy)-1},j} = a_{y_{n\pi(\sy)-1},y_{n\pi(\sy)}} = 1$.
Therefore, we can define an element $\sz = (z_k)_{k\in\ZZ} \in \Sigma_A$ by setting $z_k$ to be equal to $x_k$ for $-m\pi(\sx)\leq k\leq -1$, and to $y_k$ for $0\leq k\leq n\pi(\sy)-1$, and requiring $z_{k+m\pi(\sx)+n\pi(\sy)} = z_k$ for all $k\in\ZZ$.
This element $\sz$ belongs to $\Pi_j$, its period $\pi(\sz)$ is a submultiple of $m\pi(\sx)+n\pi(\sy)$ and, using the fact that the cocycle is locally constant,
\begin{eqnarray*}
\Phi^{(m\pi(\sx))}(\sx) \, \Phi^{(n\pi(\sy))}(\sy) & = & \varphi(x_{m\pi(\sx)-1}) \cdots \varphi(x_0) \varphi(y_{n\pi(\sy)-1}) \cdots \varphi(y_0)\\
& = & \varphi(z_{m\pi(\sx)+n\pi(\sy)-1}) \cdots \varphi(z_0) \ =\ \Phi^{(m\pi(\sx)+n\pi(\sy))}(\sz),
\end{eqnarray*}
hence $\Phi^{(m\pi(\sx))}(\sx) \, \Phi^{(n\pi(\sy))}(\sy) \in \Gamma_j$.
\end{proof}

The following lemma allows to translate the hypothesis on the Lyapunov exponents into a geometric property of the embedding of the semigroup $\Gamma_j$ in $\GL(d,\RR)$.

\begin{lema} \label{lem:lambda-farfromwall}
There exists $M'>0$ with the following property: for any $1\leq i\leq d-1$ and $1\leq j\leq N$, if there exists $c>0$ such that $(\lambda_i - \lambda_{i+1})(\Phi^{(\pi(\sx))}) \geq c \pi(\sx)$ for all $\sx\in\Pi_j$, then for any $\gamma=\Phi^{(n\pi(\sx))}(\sx)\in\Gamma_j$,
\begin{equation} \label{eqn:misc-kappa-lambda-2}
\sqrt{2} \, \|\mu(\gamma)\| \geq (\lambda_i - \lambda_{i+1})(\gamma) \geq c \, n \pi(\sx) \geq \frac{c}{M'}\,\|\mu(\gamma)\| \geq \frac{c}{M'}\,\|\lambda(\gamma)\|.
\end{equation}
In particular, the limit cone $\cC_{\lambda}(\Gamma_j)$ does not meet the $i$-th wall $\{ x_i=x_{i+1}\}$ of~$\mathfrak{a}^+$ outside of~$0$.
\end{lema}

\begin{proof}
The first inequality in \eqref{eqn:misc-kappa-lambda-2} follows from the fact that for any $g\in\GL(d,\RR)$, using \eqref{eqn:lambda-mu-relation}, we have $(\lambda_i - \lambda_{i+1})(g) \leq (\lambda_1 - \lambda_d)(g) \leq \sqrt{2} \, \|\lambda(g)\| \leq \sqrt{2} \, \|\mu(g)\|$.
The second inequality in \eqref{eqn:misc-kappa-lambda-2} is immediate from the assumption and the fact that $\lambda(\Phi^{(n\pi(\sx))}(\sx)) = n\,\lambda(\Phi^{(\pi(\sx))}(\sx))$.
Let $M' := \max_{1\leq j\leq N} \|\mu(\varphi(j))\| > 0$.
By \eqref{eqn:mu-subadd}, for any $n\in\NN^*$ and $\sx=(x_k)_{k\in\ZZ}\in\Pi_j$ we have
$$\|\mu(\Phi^{(n\pi(\sx))}(\sx))\| \leq \|\mu(\varphi(x_{n\pi(\sx)-1}))\| + \dots + \|\mu(\varphi(x_0))\| \leq n\pi(\sx) \, M',$$
which yields the third inequality in \eqref{eqn:misc-kappa-lambda-2}.
For the fourth inequality, see \eqref{eqn:lambda-mu-relation}.
\end{proof}

\subsection{Proof of Theorem~\ref{thm:sufficient-domin-splitting} in the case that $H$ is reductive} \label{ss.onereductive}

By Facts \ref{fact-BG} and~\ref{fact:Lyap-gap-periodic-meas}, it is sufficient to prove the following.

\begin{prop} \label{prop-teoA-reductive}
Suppose the Zariski closure $H$ of $\Gamma$ in $\GL(d,\RR)$ is reductive.
For $1\leq i\leq d-1$, if there exists $c>0$ such that $(\lambda_i - \lambda_{i+1})(\Phi^{(\pi(\sx))}) \geq c \pi(\sx)$ for all periodic $\sx\in\Sigma_A$, then there exist $C,C'>0$ such that for any $n\in\NN$ and $\sx \in \Sigma_A$,
$$(\mu_i - \mu_{i+1})(\Phi^{(n)}(\sx)) \geq Cn - C'. $$
\end{prop}

The first ingredient in the proof is the following.

\begin{lema} \label{l.Hj-reductive}
For any $1\leq j\leq N$, the Zariski closure $H$ of $\Gamma$ and the Zariski closure $H_j$ of $\Gamma_j$ in $\GL(d,\RR)$ have the same identity component (for the Zariski topology).
In particular, $H$ is reductive if and only if $H_j$ is.
\end{lema}

\begin{proof}
Let $H_0$ (\resp $(H_j)_0$) be the identity component of $H$ (\resp $H_j$) for the Zariski topology; it is a finite-index normal subgroup of $H$ (\resp $H_j$) which is irreducible, \ie which cannot be written as the union of two proper Zariski-closed subsets (see \eg \cite[Lem.\,5.21]{BenoistQuint-livre}).
Since $\Gamma_j \subset \Gamma$, we have $H_j \subset H$, hence $(H_j)_0 \subset H_0$.
We can write $H_j = h_1 (H_j)_0 \cup \dots \cup h_r (H_j)_0$ for some $h_1,\dots,h_r\in H_j$.
By Fact~\ref{fact-specification}, there is a finite subset $S$ of~$\Gamma$ such that $\Gamma \subset \bigcup_{s_1,s_2\in S}\, s_1^{-1}\,\Gamma_j\,s_2^{-1}$.
Setting $S' := S^{-1}h_1 \cup \dots \cup S^{-1}h_r \cup S^{-1} \subset H$, we see that $\Gamma$ is contained in the Zariski-closed subset $\bigcup_{s'_1,s'_2\in S'}\, s'_1\,(H_j)_0\,s'_2$ of $\GL(d,\RR)$, and so the same holds for~$H$.
Thus $H_0$ is the union of its Zariski-closed subsets $H_0 \cap (s'_1\,(H_j)_0\,s'_2)$ for $s'_1,s'_2\in S'$.
Since $H_0$ is irreducible, it is equal to one of these subsets: $H_0 \subset s'_1\,(H_j)_0\,s'_2$ for some $s'_1,s'_2\in S'$.
Since $H_0$ is normal in~$H$, we have $H_0 = {s'_1}^{-1} H_0\,s'_1 \subset (H_j)_0\,s'_2 s'_1$.
In particular, $\mathrm{id}\in (H_j)_0\,s'_2 s'_1$, hence $s'_2 s'_1\in (H_j)_0$ and $H_0 \subset (H_j)_0$.
On the other hand, $(H_j)_0 \subset H_0$, hence $H_0 = (H_j)_0$.
\end{proof}

The second ingredient is the following, based on Lemma~\ref{lem:lambda-farfromwall} and Fact~\ref{fact:Benoist}.

\begin{lema} \label{l.farfromwall}
For $1\leq j\leq N$, suppose $H_j$ is reductive.
For $1\leq i\leq d-1$, if there exists $c>0$ such that $(\lambda_i - \lambda_{i+1})(\Phi^{(\pi(\sx))}) \geq c \pi(\sx)$ for all $\sx\in\Pi_j$, then there exist $c_0,c'_0>0$ such that for any $\gamma\in\Gamma_j$,
$$(\mu_i - \mu_{i+1})(\gamma) \geq c_0\,\|\mu(\gamma)\| - c'_0.$$
In other words, the limit cone $\cC_{\mu}(\Gamma_j)$ does not meet the $i$-th wall $\{ x_i=x_{i+1}\}$ of~$\mathfrak{a}^+$ outside of~$0$.
\end{lema}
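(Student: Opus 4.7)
The plan is to bootstrap from Lemma~\ref{lem:lambda-farfromwall}, which already controls the eigenvalue gap on $\Gamma_j$, to the analogous singular value gap, using Benoist's Fact~\ref{fact:Benoist} as the bridge between $\lambda$ and~$\mu$. The reductivity of~$H_j$ is precisely the hypothesis that makes Fact~\ref{fact:Benoist} applicable. Concretely, since $\Gamma_j$ is a semigroup (Lemma~\ref{l.semigroup}) with reductive Zariski closure, Fact~\ref{fact:Benoist} produces a finite subset $S \subset \Gamma_j$ and a constant $M > 0$ such that for every $\gamma \in \Gamma_j$ there exists $s = s(\gamma) \in S$ with $\|\mu(\gamma) - \lambda(\gamma s)\| \leq M$. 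Because $\Gamma_j$ is a semigroup, $\gamma s \in \Gamma_j$, so Lemma~\ref{lem:lambda-farfromwall} supplies a uniform $c_0 > 0$ with $(\lambda_i - \lambda_{i+1})(\gamma s) \geq c_0\,\|\mu(\gamma s)\|$.

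Next, I would combine three elementary estimates. The coordinate-wise bound $|\mu_k(\gamma) - \lambda_k(\gamma s)| \leq \|\mu(\gamma) - \lambda(\gamma s)\| \leq M$, applied at $k = i$ and $k = i+1$, gives $(\mu_i - \mu_{i+1})(\gamma) \geq (\lambda_i - \lambda_{i+1})(\gamma s) - 2M$. The subadditivity estimate~\eqref{eqn:mu-subadd} yields $\|\mu(\gamma s)\| \geq \|\mu(\gamma)\| - \|\mu(s)\| \geq \|\mu(\gamma)\| - M_S$, where $M_S := \max_{s \in S} \|\mu(s)\| < \infty$ since $S$ is finite. Assembling these three inequalities gives $(\mu_i - \mu_{i+1})(\gamma) \geq c_0\,\|\mu(\gamma)\| - (c_0 M_S + 2M)$, the desired conclusion with $c'_0 := c_0 M_S + 2M$. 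The reformulation in terms of the limit cone $\cC_{\mu}(\Gamma_j)$ follows by a standard limiting argument: along a sequence of pairwise distinct $\gamma_n \in \Gamma_j$ with $\mu(\gamma_n)/\|\mu(\gamma_n)\|$ converging to a point of $\cC_{\mu}(\Gamma_j)$ lying on the $i$-th wall, dividing by $\|\mu(\gamma_n)\|$ and passing to the limit would force $0 \geq c_0 > 0$, a contradiction.

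I expect no genuinely hard step here: once Fact~\ref{fact:Benoist} is invoked the argument is a short calculation. The conceptual content is simply that reductivity of~$H_j$ is what bridges the $\lambda$-side (which the Lyapunov exponent hypothesis controls via the periodic-orbit formula~\eqref{eqn:Lyap-exp-periodic-meas}) and the $\mu$-side (which is what is needed to feed the Bochi--Gourmelon criterion in Proposition~\ref{prop-teoA-reductive}). Without the reductivity assumption, $\cC_{\mu}(\Gamma_j)$ can be strictly larger than $\cC_{\lambda}(\Gamma_j)$, so some reduction of the form carried out in Lemma~\ref{l.Hj-reductive} is unavoidable.
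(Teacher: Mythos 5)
Your proposal is correct and follows essentially the same route as the paper: both apply Lemma~\ref{lem:lambda-farfromwall} to get the eigenvalue-gap bound on $\Gamma_j$, use Fact~\ref{fact:Benoist} (valid because $H_j$ is reductive and $\Gamma_j$ is a semigroup by Lemma~\ref{l.semigroup}) to pass from $\lambda(\gamma s)$ to $\mu(\gamma)$ at the cost of $2M$, and then use the subadditivity estimate~\eqref{eqn:mu-subadd} to replace $\|\mu(\gamma s)\|$ by $\|\mu(\gamma)\|$ up to $\max_{s\in S}\|\mu(s)\|$. The constants you obtain match the paper's $c'_0 = 2M + c_0\max_{s\in S}\|\mu(s)\|$ exactly.
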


\begin{proof}
Suppose there exists $c>0$ such that $(\lambda_i - \lambda_{i+1})(\Phi^{(\pi(\sx))}) \geq c \pi(\sx)$ for all $\sx\in\Pi_j$.
By Lemma~\ref{lem:lambda-farfromwall}, there exists $c_0>0$ such that $(\lambda_i - \lambda_{i+1})(\gamma) \geq c_0\,\|\mu(\gamma)\|$ for all $\gamma\in\Gamma_j$.
Since $H_j$ is reductive, by Fact~\ref{fact:Benoist} there exist a finite subset $S$ of $\Gamma_j$ and a constant $M>0$ such that for any $\gamma\in\Gamma$ we can find $s\in S$ with
$$\big|(\mu_i - \mu_{i+1})(\gamma) - (\lambda_i - \lambda_{i+1})(s\gamma)\big| \leq \sqrt{2} \, \| \mu(\gamma) - \lambda(s\gamma) \| \leq \sqrt{2} M.$$
Using \eqref{eqn:mu-subadd}, we then have
$$(\mu_i - \mu_{i+1})(\gamma) \geq c_0 \, \|\mu(s\gamma)\| - \sqrt{2} M \geq c_0 \, \|\mu(\gamma)\| - c'_0,$$
where $c'_0 = \sqrt{2} M + c_0\,\max_{s\in S} \|\mu(s)\|$.
\end{proof}

\begin{proof}[Proof of Proposition~\ref{prop-teoA-reductive}]
Choose an integer $1\leq j\leq N$.
Since $H$ is reductive, Lemma~\ref{l.Hj-reductive} implies that the Zariski closure $H_j$ of~$\Gamma_j$ in $\GL(d,\RR)$ is reductive.
Suppose there exists $c>0$ such that $(\lambda_i - \lambda_{i+1})(\Phi^{(\pi(\sx))}) \geq c \pi(\sx)$ for all periodic $\sx\in\Sigma_A$.
Let $n_0>0$ be given by Fact~\ref{fact-specification}, let
$$M'' := \max_{z_1,\dots,z_{n_0}\in\{1,\dots,N\}} \big\| \mu\big(\varphi(z_{n_0})\cdots\varphi(z_1)\big)\big\|,$$
and let $c_0,c'_0>0$ be given by Lemma~\ref{l.farfromwall}.
We claim that for any $\sx = (x_k)_{k\in\ZZ} \in \Sigma_A$ and $n>0$,
\begin{equation}\label{eqn:allx}
(\mu_i - \mu_{i+1})(\Phi^{(n)}(\sx)) \geq Cn - C',
\end{equation}
where $C := c_0\,c/\sqrt{2}$ and $C':=c'_0+2\sqrt{2}M''$.
Indeed, by Fact~\ref{fact-specification}, there exists $\sy \in \Pi_j$ with $\pi(\sy) = n+2n_0$ such that $y_{n_0+k}=x_k$ for all $0\leq k\leq n-1$.
By Lemma~\ref{l.farfromwall} and the inequality $\sqrt{2} \, \|\mu(\Phi^{(\pi(\sy))}(\sy))\| \geq c \, \pi(\sy)$ in Lemma~\ref{lem:lambda-farfromwall}, we have
$$(\mu_i - \mu_{i+1})(\Phi^{(\pi(\sy))}(\sy)) \geq c_0\,\big\|\mu(\Phi^{(\pi(\sy))}(\sy))\big\| - c'_0 \geq \frac{c_0 c}{\sqrt{2}} \, \pi(\sy) - c'_0 \geq Cn - c'_0.$$
On the other hand, by \eqref{eqn:mu-subadd} we have
$$\big\| \mu(\Phi^{(n)}(\sx)) - \mu(\Phi^{(\pi(\sy))}(\sy)) \big\| \leq 2M''.$$
This proves \eqref{eqn:allx} and completes the proof.
\end{proof}

\subsection{Proof of Theorem~\ref{thm:sufficient-domin-splitting} in the general case} \label{subsec:proof-teoA-nonreductive}

Let $H = L \ltimes R_u(H)$ be a Levi decomposition of~$H$, and $\pi^{ss} : H \to L$ the natural projection.
We can define a new cocycle $\Phi^{ss} := \pi^{ss}\circ\Phi : X=\Sigma_A\to\GL(d,\RR)$ over~$\sigma$, with image in~$L$.
This cocycle is still locally constant since $\Phi^{ss}(\sx) = \pi^{ss}\circ\varphi(x_0)$ for all $\sx=(x_k)_{k\in\ZZ}\in\Sigma_A$.
The Zariski closure in $\GL(d,\RR)$ of the semigroup generated by the family $\{\pi^{ss}\circ\varphi(i)\}_{i \in \{1,\dots,N\}}$ is the reductive group~$L$.

There is a sequence $(g_k) \in \GL(d,\RR)^{\NN}$ such that $g_k h g_k^{-1}\to\pi^{ss}(h)$ for all $h\in H$ (see Section~\ref{ss.semisimp}).
In particular, $g_k\Phi(\sx)g_k^{-1} \to \Phi^{ss}(\sx)$ for all $\sx \in \Sigma_A$, and so $\lambda(\Phi^{(n)}(\sx)) = \lambda((\Phi^{ss})^{(n)}(\sx))$ for all $\sx\in\Sigma_A$ and $n \in \ZZ$ (see \eqref{eqn:lambda-pi-ss}).
By \eqref{eqn:Lyap-exp-periodic-meas}, the assumption on Lyapunov exponents can be rephrased as $(\lambda_i -\nolinebreak \lambda_{i+1})(\Phi^{(\pi(\sx))}(\sx)) \geq c\pi(\sx)$ for all periodic $\sx\in X$, and so the same inequalities hold if we replace $\Phi$ by $\Phi^{ss}$.

By Proposition~\ref{prop-teoA-reductive} the cocycle $\Phi^{ss}$ has a dominated splitting of index~$i$.
Since the cocycles are locally constant, the simple convergence $g_k\Phi(\cdot)g_k^{-1} \to \Phi^{ss}$ is a convergence for the compact open topology.
By openness (Fact~\ref{fact-dsopen}), for large enough~$k$ the cocycle $g_k \Phi(\cdot) g_k^{-1}$ also has a dominated splitting of index~$i$.
The definition of dominated splitting (see Section~\ref{subsec:prelim-domin-splitting}) is clearly invariant under conjugation by $\GL(d,\RR)$; therefore $\Phi$ also has a dominated splitting of index~$i$ and this completes the proof. 

\begin{remark} \label{rem:proof-teoA-awayfromwall}
The fact that $\Phi$ has a dominated splitting of index~$i$ implies (Fact~\ref{fact-BG}) that for any $1\leq j\leq N$ the limit cone $\cC_{\mu}(\Gamma_j)$ does not meet the $i$-th wall $\{ x_i=x_{i+1}\}$ of~$\mathfrak{a}^+$ outside of~$0$.
This is not immediate from Lemma~\ref{lem:lambda-farfromwall} when $H_j$ is not reductive.
\end{remark}

\subsection{Uniformity is necessary in Theorem~\ref{thm:charact-domin-splitting}} \label{s.nonunifgap}

In this section we assume $N=2$ and construct a locally constant cocycle $\Phi : \Sigma = \{1,2\}^{\ZZ} \to \GL(3,\RR)$ over the full 2-shift $\sigma : \{1,2\}^{\ZZ} \to \{1,2\}^{\ZZ}$ for which $\chi_2(\nu) > \chi_3(\nu)$ for any $\sigma$-invariant periodic measure $\nu$ on $\{1,2\}^{\ZZ}$, but for which there is a sequence $(\nu_n)_{n\in\NN}$ of $\sigma$-invariant periodic measures with $\chi_2(\nu_n)-\chi_3(\nu_n) \to 0$; in particular, this cocycle cannot have a dominated splitting of index~$2$.
This shows that in Theorem~\ref{thm:charact-domin-splitting} it is necessary to require the existence of a \emph{uniform} gap between Lyapunov exponents of periodic orbits, not just a gap.

The example here is modeled on one presented by J.~Bochi in \cite[Part\,2]{Bochi-ICMtalk}.
Note that it is not strongly irreducible; it would be interesting to know if a strongly irreducible example exists.
We refer to \cite{Bochi-ICM} and \cite[\S\,5]{BS} for further discussions.

\begin{ex} \label{ex:nonunifgap}
Let $\Phi : \{1,2\}^{\ZZ} \to \GL(3,\RR)$ be the locally constant cocycle over the full 2-shift sending $\sx=(x_k)_{k\in\ZZ}\in\{1,2\}^{\ZZ}$ to $\varphi(x_0)$ where
$$ \varphi(1) = \begin{bmatrix} 2 & 0 & 0 \\ 0 & \frac{1}{8} & 0 \\ 0 & 0 & \frac{1}{2} \end{bmatrix} \quad\quad\text{and}\quad\quad  \varphi(2) = \begin{bmatrix} 0 & -1 & 0 \\ 1 & 0 & 0 \\ 0 & 0 & \frac{1}{e} \end{bmatrix}. $$
A simple calculation shows that for any periodic $\sx=(x_k)_{k\in\ZZ}\in\{1,2\}^{\ZZ}$ of period $\pi(\sx)$, there exist $\ell,\ell',m\in\NN$ with $\ell+\ell'+2m=2\pi(\sx)$ such that
$$ \Phi^{2\pi(\sx)}(\sx) = \begin{bmatrix} 2^{\ell} 2^{-3\ell'} & 0 & 0 \\ 0 & 2^{-3\ell}2^{\ell'} & 0 \\ 0 & 0 & 2^{-(\ell+\ell')} e^{-2m} \end{bmatrix}. $$
(Indeed, $\Phi^{2\pi(\sx)}(\sx)$ is a product of positive powers of $\varphi(0)$ and $\varphi(1)$; looking at its action on~$\RR^3$, we see that the directions of $e_1=(1,0,0)$ and $e_2=(0,1,0)$ are switched each time $\varphi(1)$ is applied; the integer $\ell$ (\resp $\ell'$) counts how many times $e_1$ is multiplied by $2$ (\resp by $1/8$) when applying $\varphi(1)$; the integer $2m$ counts the number of occurrences of $\varphi(2)$ in $\Phi^{2\pi(\sx)}(\sx)$.)
Using \eqref{eqn:Lyap-exp-periodic-meas}, it follows that the (unordered) list of Lyapunov exponents of the measure $\nu_{\sx}$ from \eqref{eqn:nu-x} is
$$ \{ \chi_i(\nu_{\sx})\}_{1\leq i\leq 3} = \bigg\{ \frac{\ell-3\ell'}{\ell+\ell'+2m} \log 2, \ \frac{\ell'-3\ell}{\ell+\ell'+2m} \log 2, \ \frac{-(\ell+\ell')\log 2 - 2m}{\ell+\ell'+2m} \bigg\} . $$
Therefore $\chi_1(\nu_{\sx})>\chi_2(\nu_{\sx})>\chi_3(\nu_{\sx})$ if $\ell\neq\ell'$, and $\chi_1(\nu_{\sx})=\chi_2(\nu_{\sx})>\chi_3(\nu_{\sx})$ if $\ell=\ell'$.
Moreover, if $\ell$ is close to~$\ell'$ and much larger than~$m$, then $\chi_1(\nu_{\sx}),\chi_2(\nu_{\sx}),\chi_3(\nu_{\sx})$ can be made arbitrarily close to each other. 
\end{ex}

\begin{remark}
For locally constant cocycles over a subshift with values in $\GL(2,\RR)$, there is a finer understanding of the relationship between eigenvalues of the cocycle over periodic points and domination: see \cite[Th.\,4.1]{ABY} and also \cite{BR}.
In higher dimensions, the situation is less understood: see \cite{BG,Bochi-ICM,Bochi-ICMtalk}.
\end{remark}

\section{Eigenvalue gaps for representations of finitely generated groups} \label{sec:gaps-in-groups}

In this section we prove Proposition~\ref{prop:eig-sing-gap-group} and Theorem~\ref{thm:charact-domin-splitting-hyp-gp}.
We first introduce some terminology.

\subsection{Terminology} \label{subsec:terminology-gaps}

Let $\Gamma$ be a group with a finite generating subset~$F$, \ie any element of~$\Gamma$ can be written as a product of elements of $F\cup F^{-1}$.
For $\gamma\in\Gamma\smallsetminus\{e\}$, we denote by
\begin{itemize}
  \item $|\gamma|_F$ the \emph{word length} of $\gamma$ with respect to~$F$, \ie
  $$|\gamma|_F = \inf\big\{ k\in\NN^*\ :\ \exists f_0,\dots,f_{k-1}\in F\cup F^{-1}\ \mathrm{s.t.}\ \gamma=f_0\cdots f_{k-1}\big\},$$
  \item $|\gamma|_{F,\infty}$ the \emph{stable length} of $\gamma$ with respect to~$F$, \ie
  $$|\gamma|_{F,\infty} = \lim_{n\to +\infty} \frac{|\gamma^n|_F}{n},$$
  \item $\ell_F(\gamma)$ the \emph{translation length} of $\gamma$ in the Cayley graph $\mathrm{Cay}(\Gamma,F)$, \ie
  $$\ell_F(\gamma) := \inf_{\beta\in\Gamma} |\beta^{-1} \gamma \beta|_F.$$
\end{itemize}
We also set $|e|_F = |e|_{F,\infty} = \ell_F(e) = 0$.
The stable length and the translation length are invariant under conjugation, and satisfy
\begin{equation} \label{eqn:stable-transl-length}
|\gamma|_{F,\infty} \leq \ell_F(\gamma)
\end{equation}
for all $\gamma\in\Gamma$.
The word length defines a $\Gamma$-invariant metric $d_{F}$ on the Cayley graph $\mathrm{Cay}(\Gamma,F)$, given by $d_{F}(\gamma,\gamma'):=|\gamma^{-1}\gamma'|_F$ for all $\gamma,\gamma'\in\Gamma$.
Recall that $\Gamma$ is said to be \emph{word hyperbolic} if the metric space $(\mathrm{Cay}(\Gamma,F),d_F)$ is Gromov hyperbolic.
This is independent of the choice of the finite generating set~$F$ by the following remark.

\begin{remark} \label{rem:change-gen-set}
If $F'$ is another finite generating subset of~$\Gamma$, then by setting $M := \max(\max_{f\in F} |f|_{F'},\max_{f'\in F'} |f'|_F) \geq 1$ we have $M^{-1} \, |\gamma|_{F'} \leq |\gamma|_F \leq M \, |\gamma|_{F'}$ for all $\gamma\in\Gamma$, which implies $M^{-1} \, |\gamma|_{F',\infty} \leq |\gamma|_{F,\infty} \leq M \, |\gamma|_{F',\infty}$ and $M^{-1} \, \ell_{F'}(\gamma) \leq \ell_F(\gamma) \leq M \, \ell_{F'}(\gamma)$.
\end{remark}

We shall adopt the following terminology.

\begin{defi} \label{def:gaps}
For $1\leq i\leq d-1$, a representation $\rho : \Gamma\to\GL(d,\RR)$ has
\begin{itemize}
  \item a \emph{uniform $i$-gap in singular values} if there exist $c,c'>0$ such that
  \[ (\mu_i - \mu_{i+1})(\rho(\gamma)) \geq c \, |\gamma|_F - c'  \ \ \ \ \ \ \forall \gamma \in \Gamma, \]
  \item a \emph{weak uniform $i$-gap in eigenvalues} if there exist $c,c'>0$ such that
  \[ (\lambda_i - \lambda_{i+1})(\rho(\gamma)) \geq c \, |\gamma|_{F,\infty} - c' \ \ \ \ \ \ \forall \gamma \in \Gamma, \]
  \item a \emph{strong uniform $i$-gap in eigenvalues} if there exist $c,c'>0$ such that
  \[ (\lambda_i - \lambda_{i+1})(\rho(\gamma)) \geq c \, \ell_F(\gamma) - c'  \ \ \ \ \ \ \forall \gamma \in \Gamma. \]
\end{itemize}
\end{defi}

These notions do not depend on the choice of finite generating set~$F$, by Remark~\ref{rem:change-gen-set}.
A strong uniform $i$-gap in eigenvalues implies a weak one by \eqref{eqn:stable-transl-length}.

\begin{remarks} \label{rem:gaps}
\begin{enumerate}
  \item\label{item:i-(d-i)-gap} All notions of uniform $i$-gap in Definition~\ref{def:gaps} are equivalent to the corresponding notions of uniform $(d-i)$-gap.
  Indeed, by \eqref{eqn:mu-lambda-inverse} we have $(\bullet_i-\bullet_{i+1})(g)=(\bullet_{d-i}-\bullet_{d-i+1})(g^{-1})$ for $\bullet=\mu$ or~$\lambda$ and for all $g\in\GL(d,\RR)$, and for any $\gamma\in\Gamma$ we have $|\gamma|_F=|\gamma^{-1}|_F$ and similarly for $|\cdot|_{F,\infty}$ and~$\ell_F$.
  \item\label{item:gap-conj} All notions of uniform $i$-gap in Definition~\ref{def:gaps} are invariant under conjugation.
  Indeed, for eigenvalues this follows from the invariance of $\lambda : G\to\mathfrak{a}^+$ under conjugation, and for singular values this follows from \eqref{eqn:mu-subadd}.
  \item\label{item:sing-weak-eig} If $\rho$ has a uniform $i$-gap in singular values, then it also has a weak uniform $i$-gap in eigenvalues, using \eqref{eqn:lambda-mu-relation} and the definition of stable length.
  \item\label{item:gap-displacing} If $\rho$ has a strong uniform $i$-gap in eigenvalues, then it is \emph{displacing} in the sense of \cite{DGLM}: there exist $c,c'>0$ such that $\|\lambda(\rho(\gamma))\| \geq c \, \ell_F(\gamma) - c'$ for all $\gamma \in \Gamma$.
  Similarly, if $\rho$ has a weak uniform $i$-gap in eigenvalues, then there exist $c,c'>0$ such that $\|\lambda(\rho(\gamma))\| \geq c \, |\gamma|_{F,\infty} - c'$ for all $\gamma \in \Gamma$.
  \item\label{item:strong-weak-gap-hyp} If $\Gamma$ is word hyperbolic, then having a weak uniform $i$-gap in eigenvalues is equivalent to having a strong uniform $i$-gap in eigenvalues, since
\begin{equation} \label{eqn:stable-transl-length-hyp}
\ell_F(\gamma) - 16 \delta \leq |\gamma|_{F,\infty} \leq \ell_F(\gamma)
\end{equation}
for all $\gamma\in\Gamma$, where $\delta\geq 0$ is the hyperbolicity constant of~$\Gamma$ (see \cite[Ch.\,10, Prop.\,6.4]{cdp90}).
For such groups we shall just talk about having a uniform $i$-gap in eigenvalues.
\end{enumerate}
\end{remarks}

\subsection{Proof of Proposition~\ref{prop:eig-sing-gap-group}} \label{subsec:proof-eig-sing-gap-group}

Let $\rho : \Gamma\to\GL(d,\RR)$ be a representation.
We have already seen (Remark~\ref{rem:gaps}.\eqref{item:sing-weak-eig}) that if $\rho$ has a uniform $i$-gap in singular values, then it also has a uniform $i$-gap in eigenvalues.
Let us prove the converse implication.
  
We may assume that the Zariski closure $H$ of $\rho(\Gamma)$ in $\GL(d,\RR)$ is reductive: otherwise, as in Sections \ref{ss.semisimp} and~\ref{subsec:proof-teoA-nonreductive}, consider a Levi decomposition $H = L \ltimes R_u(H)$ of~$H$, the natural projection $\pi^{ss}: H \to L$, and the semisimplification $\rho^{ss} := \pi^{ss}\circ\rho : \Gamma\to\GL(d,\RR)$.
The Zariski closure of $\rho^{ss}(\Gamma)$ in $\GL(d,\RR)$ is the reductive group~$L$.
There is a sequence $(g_k) \in \GL(d,\RR)^{\NN}$ such that $g_k h g_k^{-1}\to\pi^{ss}(h)$ for all $h\in H$.
In particular, $g_k\rho(\gamma)g_k^{-1} \to \rho^{ss}(\gamma)$ for all $\gamma \in \Gamma$, and so $\lambda(\rho(\gamma)) = \lambda(\rho^{ss}(\gamma))$ for all $\gamma\in\Gamma$ (see \eqref{eqn:lambda-pi-ss}); therefore, $\rho$ has a uniform $i$-gap in eigenvalues if and only if $\rho^{ss}$ does.  
On the other hand, having a uniform $i$-gap in singular values is an open property (see \cite{KLP,BPS}) which is invariant under conjugation (Remark~\ref{rem:gaps}.\eqref{item:gap-conj}); therefore, if $\rho^{ss}$ has a uniform $i$-gap in singular values, then so does~$\rho$.

We now assume that the Zariski closure of $\rho(\Gamma)$ in $\GL(d,\RR)$ is reductive.

Suppose $\rho$ has a (strong) uniform $i$-gap in eigenvalues.
By Remark~\ref{rem:gaps}.\eqref{item:gap-displacing}, there exist $c_0,c'_0>0$ such that $\|\lambda(\rho(\gamma))\| \geq c_0 \, \ell_F(\gamma) - c'_0$ for all $\gamma \in \Gamma$.
Since $\Gamma$ is word hyperbolic, it satisfies \cite[Prop.\,2.2.1]{DGLM} what Delzant--Guichard--Labourie--Mozes call \emph{property~(U)} and which we call \emph{weak property~(U)} in Definition~\ref{def:prop-U-groups} below: there exist a finite subset $S'$ of~$\Gamma$ and constants $c_1,c'_1>0$ such that $\max_{s\in S'} \ell_F(s\gamma) \geq c_1 \, |\gamma|_F - c'_1$ for all $\gamma \in \Gamma$.
As observed in \cite[Lem.\,2.0.1]{DGLM}, this implies that $\rho$ is a quasi-isometric embedding, \ie there exist $c_2,c'_2>0$ such that for any $\gamma\in\Gamma$,
\begin{equation} \label{eqn:QI-embed}
\|\mu(\rho(\gamma))\| \geq c_2 \, |\gamma|_F - c'_2.
\end{equation}
Indeed, for any $\gamma\in\Gamma$ and $s\in S'$ with $\ell_F(s\gamma) \geq c_1 \, |\gamma|_F - c'_1$, using \eqref{eqn:lambda-mu-relation} we have $\Vert\mu(\rho(s\gamma))\Vert \geq \Vert\lambda(\rho(s\gamma))\Vert \geq c_0 \, \ell_F(s\gamma) - c'_0 \geq c_0 c_1 \, |\gamma|_F - (c_0 c'_1 + c'_0)$, and so we may take $c_2 = c_0 c_1$ and $c'_2 = c_0 c'_1 + c'_0 + \max_{s'\in S'} \Vert\mu(\rho(s'))\Vert$ by \eqref{eqn:mu-subadd}.

Given \eqref{eqn:QI-embed}, in order to conclude that $\rho$ has a uniform $i$-gap in singular values, it is sufficient to prove that there exist $c_3,c'_3>0$ such that for any $\gamma\in\Gamma$,
\begin{equation} \label{eqn:cone-away-from-wall}
(\mu_i - \mu_{i+1})(\rho(\gamma)) \geq c_3 \, \|\mu(\rho(\gamma))\| - c'_3.
\end{equation}
We now prove \eqref{eqn:cone-away-from-wall}.
By \eqref{eqn:mu-subadd} and \eqref{eqn:mu-lambda-inverse}, if we set $M := \max_{f\in F} \|\mu(\rho(f))\|>0$, then for any $\gamma\in\Gamma$ we have $\|\mu(\rho(\gamma))\| \leq M \, |\gamma|_F$.
Applying this to $\gamma^n$, dividing by~$n$, and taking the limit, we find $\|\lambda(\rho(\gamma))\| \leq M \, |\gamma|_{F,\infty}$ for all $\gamma\in\Gamma$ (using \eqref{eqn:lambda-mu-relation} and the definition of the stable length).
Since $\rho$ has a (weak) uniform $i$-gap in eigenvalues, there exist $c,c'>0$ such that for any $\gamma\in\Gamma$,
$$(\lambda_i - \lambda_{i+1})(\rho(\gamma)) \geq c \, |\gamma|_{F,\infty} - c' \geq cM^{-1} \, \|\lambda(\rho(\gamma))\| - c'.$$
Applying this to $\gamma^n$, dividing by~$n$, and taking the limit, we find
$$(\lambda_i - \lambda_{i+1})(\rho(\gamma)) \geq cM^{-1} \, \|\lambda(\rho(\gamma))\|$$
for all $\gamma\in\Gamma$.
In particular, the limit cone $\cC_{\lambda}(\rho(\Gamma))$ does not meet the $i$-th wall $\{ x_i=x_{i+1}\}$ of~$\mathfrak{a}^+$ outside of~$0$.
Now, since the Zariski closure of $\rho(\Gamma)$ in $\GL(d,\RR)$ is reductive, we have $\cC_{\mu}(\rho(\Gamma)) = \cC_{\lambda}(\rho(\Gamma))$ (Fact \ref{fact:Benoist}), and so for any sequence $(\gamma_n)\in\Gamma^{\NN}$ with $\Vert\mu(\rho(\gamma_n))\Vert \to +\infty$ we have
$$\liminf_{n\in\NN} \, \frac{(\mu_i - \mu_{i+1})(\rho(\gamma_n))}{\|\mu(\rho(\gamma_n))\|} \geq cM^{-1}.$$
In particular, using \eqref{eqn:QI-embed}, we see that if we fix $0 < c_3 < cM^{-1}$, then there is a finite subset $S''$ of~$\Gamma$ such that $(\mu_i - \mu_{i+1})(\rho(\gamma)) \geq c_3 \, \|\mu(\rho(\gamma))\|$ for all $\gamma\in\Gamma\smallsetminus S''$.
Therefore \eqref{eqn:cone-away-from-wall} holds with $c'_3 = c_3 \max_{s''\in S''} \|\mu(\rho(s''))\|$.
This completes the proof of Proposition~\ref{prop:eig-sing-gap-group}.

\begin{remark}
The short proof of Proposition~\ref{prop:eig-sing-gap-group} given here, based on \cite{DGLM}, was pointed out to us by Konstantinos Tsouvalas.
In a previous version of this paper, we had given a direct proof of Proposition~\ref{prop:eig-sing-gap-group} that did not use \cite{DGLM}, by establishing an analogue of \cite[Th.\,5.17]{AMS} (see Fact~\ref{fact:Benoist}) simultaneously for the linear group $\rho(\Gamma)$ and the abstract group $\Gamma$, namely:
\textit{for any word hyperbolic group $\Gamma$ and any representation $\rho: \Gamma \to \mathrm{GL}(d,\RR)$ such that the Zariski closure of $\rho(\Gamma)$ in $\GL(d,\RR)$ is reductive, there exist a finite subset $S$ of~$\Gamma$ and a constant $M>0$ such that for any $\gamma \in \Gamma$ we can find $s\in S$ satisfying simultaneously the following two properties:}
$$\left\{ \begin{array}{l}
\|\lambda(\rho(s\gamma)) - \mu(\rho(\gamma))\| \leq M , \\
| |s\gamma|_{F,\infty} - |\gamma|_F| \leq M .
\end{array} \right.$$
We could then conclude directly that a uniform $i$-gap in eigenvalues with constants $(c,c'$) implies a uniform $i$-gap in singular values with constants $(c, cM + c' + \sqrt{2}M)$:
\begin{eqnarray*}
(\mu_i - \mu_{i+1})(\rho(\gamma)) & \geq & (\lambda_i - \lambda_{i+1})(\rho(s\gamma)) - \sqrt{2}M\\
& \geq & c \, |s\gamma|_{F,\infty} - (c' + \sqrt{2}M) \geq c \, |\gamma|_F - (cM + c' + \sqrt{2}M).
\end{eqnarray*}
This result will appear in separate upcoming work.
\end{remark}

\subsection{Link with Anosov representations of word hyperbolic groups}

The following was proved by Kapovich--Leeb--Porti \cite{KLP}, and an alternative proof was given in \cite{BPS}.

\begin{fact}[{\cite{KLP,BPS}}] \label{fact:Ano-sg-gap}
Let $\Gamma$ be a finitely generated group and $1\leq i\leq d-1$ an integer.
A representation $\rho : \Gamma\to\GL(d,\RR)$ has a uniform $i$-gap in singular values if and only if $\Gamma$ is word hyperbolic and $\rho$ is $P_i$-Anosov.
\end{fact}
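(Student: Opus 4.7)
The plan is to prove both implications. For the forward direction, assume $\Gamma$ is word hyperbolic and $\rho$ is $P_i$-Anosov in the sense of Labourie \cite{lab06}. The defining contraction property of the associated bundle over the geodesic flow of~$\Gamma$, combined with the shadowing of group elements by Cayley-graph geodesics, translates into exponential decay of $e^{\mu_{i+1}(\rho(\gamma))-\mu_i(\rho(\gamma))}$ along $\gamma\in\Gamma$, hence into the uniform gap $(\mu_i-\mu_{i+1})(\rho(\gamma)) \geq c|\gamma|_F - c'$. This direction is essentially a translation of the Anosov condition into singular-value language once the Morse lemma for $\Gamma$ is invoked.

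The substantive direction is the converse, and I would model the proof on the approach of Bochi--Potrie--Sambarino \cite{BPS}. First, associate to $\Gamma$ the geodesic shift $\cG_F$ and consider the locally constant cocycle $\Phi_\rho : \cG_F \to \GL(d,\RR)$ defined in Section~\ref{subsec:intro-hyp-gps}. Because every finite subword of a sequence in $\cG_F$ spells a geodesic word in $\Gamma$, the uniform $i$-gap hypothesis transfers directly to the cocycle inequality $(\mu_i-\mu_{i+1})(\Phi_\rho^{(n)}(x)) \geq cn - c'$ for all $x \in \cG_F$ and $n\in\NN$. By the Bochi--Gourmelon criterion (Fact~\ref{fact-BG}), the cocycle $(\sigma,\Phi_\rho)$ admits a dominated splitting of index~$i$, with continuous $\rho$-equivariant limit bundles constructed from $\Xi_{d-i}((\Phi_\rho^{(n)}(x))^{-1})$ and $\Xi_i(\Phi_\rho^{(n)}(x))$ as $n\to+\infty$.

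Next, use this dominated splitting to produce candidate boundary data, defined on one-sided geodesic rays in $\Gamma$, with values in $\mathrm{Gr}_i(\RR^d)$ and $\mathrm{Gr}_{d-i}(\RR^d)$. Simultaneously, the uniform gap combined with submultiplicativity \eqref{eqn:mu-subadd} forces $\|\mu(\rho(\gamma))\| \geq c|\gamma|_F - c'$, so the orbit map from $\mathrm{Cay}(\Gamma,F)$ into the symmetric space $\GL(d,\RR)/K$ is a quasi-isometric embedding. The contraction coming from domination implies that images of geodesic words are uniformly Morse quasi-geodesics in the symmetric space. A thin-triangles argument, pulled back through the quasi-isometric orbit map, then shows that $\mathrm{Cay}(\Gamma,F)$ is itself Gromov hyperbolic. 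Once this is established, the candidate boundary data descends to continuous transverse dynamics-preserving $\rho$-equivariant maps $\partial\Gamma \to \mathrm{Gr}_i(\RR^d)$ and $\partial\Gamma \to \mathrm{Gr}_{d-i}(\RR^d)$, and together with the exponential contraction of the dominated splitting this yields the $P_i$-Anosov condition.

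The main obstacle is the extraction of intrinsic word hyperbolicity of $\Gamma$ from the extrinsic dynamical information carried by $\rho(\Gamma)\subset\GL(d,\RR)$. This is not a formal matter: one must leverage the nonpositive curvature of $\GL(d,\RR)/K$ and the quantitative contraction furnished by the dominated splitting to prove that the \emph{abstract} Cayley graph of $\Gamma$ is coarsely hyperbolic, without any a priori assumption on $\Gamma$ beyond finite generation. This is precisely the step at the heart of the original arguments of both \cite{KLP} and \cite{BPS}, and where any alternative proof must also concentrate its effort.
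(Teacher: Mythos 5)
The paper does not prove this statement: it is imported verbatim as a Fact from Kapovich--Leeb--Porti \cite{KLP}, with the alternative proof credited to Bochi--Potrie--Sambarino \cite{BPS}, and the text only remarks that the implication ``Anosov $\Rightarrow$ gap'' is the easy one (citing \cite[\S\,4.2]{GGKW}). Your outline is a faithful reconstruction of the \cite{BPS} strategy, so in that sense it matches the source the paper leans on: transfer the gap to the locally constant cocycle over $\cG_F$, invoke Bochi--Gourmelon to get domination and limit bundles, show the orbit map is a quasi-isometric embedding, and extract hyperbolicity of $\Gamma$ before assembling the boundary maps. Two caveats. First, a bookkeeping point: with the paper's convention $\Phi_\rho^{(n)}(\sx)=\rho(f_0\cdots f_{n-1})^{-1}$, the uniform $i$-gap for $\rho$ gives a dominated splitting of index $d-i$, not $i$; this is harmless only because $|\gamma|_F=|\gamma^{-1}|_F$ makes the $i$- and $(d-i)$-gaps equivalent for groups, and you should say so. Second, and more seriously, the step you describe as ``a thin-triangles argument, pulled back through the quasi-isometric orbit map'' is not a proof: a quasi-isometric embedding into the (higher-rank, non-hyperbolic) symmetric space transfers no hyperbolicity by itself. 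What \cite{BPS} and \cite{KLP} actually need is that the images of word geodesics are \emph{uniformly Morse} quasigeodesics satisfying a local-to-global principle, together with a genuine hyperbolicity criterion for the abstract Cayley graph (in \cite{BPS} this goes through the convergence properties of the maps $\Xi_i$ along geodesic rays). You correctly identify this as the heart of the matter, but as written it remains a named difficulty rather than an argument; since the paper itself treats the statement as a citation, this is acceptable as an outline but would not stand alone as a proof.
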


Anosov representations are representations of word hyperbolic groups into semisimple or reductive Lie groups with good dynamical properties.
They were introduced by Labourie \cite{lab06} for fundamental groups of closed negatively curved manifolds and generalized by Guichard--Wienhard \cite{gw12} to all word hyperbolic groups.
They have been very much studied in the past fifteen years, and play an important role in so-called \emph{higher Teichm\"uller theory}.
We refer \eg to \cite[\S\,4]{Kassel-ICM} for the precise definition of a $P_i$-Anosov representation.

The fact that $P_i$-Anosov representations have a uniform $i$-gap in singular values is relatively easy (see \eg \cite[\S\,4.2]{GGKW}); the point of Fact~\ref{fact:Ano-sg-gap} is the converse implication, together with the fact that the $i$-gap implies the hyperbolicity of~$\Gamma$.

Proposition~\ref{prop:eig-sing-gap-group}, which is an answer to \cite[Question\,4.10]{BPS} for word hyperbolic groups, yields, together with Fact~\ref{fact:Ano-sg-gap}, the following characterization of Anosov representations.

\begin{cor} \label{cor:char-Ano-eigenv}
Let $\Gamma$ be a word hyperbolic group and $1\leq i\leq d-1$ an integer.
A representation $\rho: \Gamma \to \mathrm{GL}(d,\RR)$ is $P_i$-Anosov if and only if it has a uniform $i$-gap in eigenvalues.
\end{cor}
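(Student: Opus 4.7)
The proof is essentially a one-line chain of equivalences once we assemble Proposition~\ref{prop:eig-sing-gap-group} and Fact~\ref{fact:Ano-sg-gap}. My plan is therefore to set up the terminology first and then concatenate them.

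First, since $\Gamma$ is assumed word hyperbolic, I would invoke Remark~\ref{rem:gaps}.\eqref{item:strong-weak-gap-hyp} to note that the stable length $|\cdot|_{F,\infty}$ and the translation length $\ell_F$ differ by a uniformly bounded additive constant (namely $16\delta$), so the weak and strong notions of uniform $i$-gap in eigenvalues coincide. This justifies referring simply to a ``uniform $i$-gap in eigenvalues'' without ambiguity, as in the statement of the corollary.

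Next, I would apply Proposition~\ref{prop:eig-sing-gap-group}: for a word hyperbolic group $\Gamma$, a representation $\rho : \Gamma\to\GL(d,\RR)$ has a uniform $i$-gap in eigenvalues if and only if it has a uniform $i$-gap in singular values. Finally, I would apply Fact~\ref{fact:Ano-sg-gap} (Kapovich--Leeb--Porti): for a finitely generated group $\Gamma$, a representation $\rho : \Gamma\to\GL(d,\RR)$ has a uniform $i$-gap in singular values if and only if $\Gamma$ is word hyperbolic and $\rho$ is $P_i$-Anosov. Since $\Gamma$ is assumed word hyperbolic in the statement, this last equivalence reduces to: uniform $i$-gap in singular values $\Longleftrightarrow$ $P_i$-Anosov. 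Composing these two equivalences gives the corollary.

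There is no real obstacle: all the work has been packaged into Proposition~\ref{prop:eig-sing-gap-group} (where the nontrivial direction eigenvalues~$\Rightarrow$~singular values was proved in Section~\ref{subsec:proof-eig-sing-gap-group} via semisimplification, the displacing/property~(U) machinery of \cite{DGLM}, and Benoist's equality $\cC_\lambda=\cC_\mu$ in the reductive case) and Fact~\ref{fact:Ano-sg-gap}. The only point worth double-checking is that one does not need to independently recover word hyperbolicity of~$\Gamma$ from the eigenvalue hypothesis, since hyperbolicity is part of the standing assumption of the corollary; this avoids the subtler implication ``singular value gap $\Rightarrow$ hyperbolic'' hidden in Fact~\ref{fact:Ano-sg-gap}.
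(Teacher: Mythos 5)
Your proof is correct and matches the paper's own derivation: the corollary is obtained exactly by concatenating Proposition~\ref{prop:eig-sing-gap-group} with Fact~\ref{fact:Ano-sg-gap}, using the standing hyperbolicity assumption to collapse the latter equivalence. Your preliminary remark on the coincidence of weak and strong eigenvalue gaps for hyperbolic groups is also how the paper disambiguates the terminology (Remark~\ref{rem:gaps}.\eqref{item:strong-weak-gap-hyp}).
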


(See \cite[Th.\,1.7]{GGKW} for previous characterizations involving growth of eigenvalues.)

\subsection{Eigenvalue gaps in nonhyperbolic groups} \label{subsec:eig-gap-nonhyp-gp}

When $\Gamma$ is \emph{not} word hyperbolic, the link between gaps in eigenvalues and gaps in singular values is more subtle.
For instance, a nonfaithful representation $\rho : \Gamma\to\GL(d,\RR)$ may have a strong uniform $i$-gap in eigenvalues without having a uniform $i$-gap in singular values.

\begin{ex}[{see \cite[\S\,5]{DGLM}}]
There exist (nonhyperbolic) finitely generated groups $\Gamma$ with only finitely many conjugacy classes: see \cite{osi10}.
For such~$\Gamma$, any representation $\rho : \Gamma\to\GL(d,\RR)$ has a strong uniform $i$-gap in eigenvalues.
However,~the~constant representation $\rho : \gamma\mapsto\mathrm{id}\in\GL(d,\RR)$ does not have an $i$-gap in~singular~values.
\end{ex}

It may also happen that a faithful representation has a weak uniform $i$-gap in eigenvalues without having a strong uniform $i$-gap in eigenvalues nor a uniform $i$-gap in singular values.

\begin{ex} \label{ex:BS}
Let $\Gamma$ be the Baumslag--Solitar group $BS(1,2) = \langle a,b \ : \ bab^{-1}=~a^2 \rangle$.
The faithful representation $\rho : \Gamma\to\GL(2,\RR)$ sending $a$ and~$b$ respectively to 
$$ A = \begin{pmatrix} 1 & 1 \\ 0 & 1\end{pmatrix}  \ \text{ and } \ B = \begin{pmatrix} 2 & 0 \\ 0 & 1 \end{pmatrix}$$
has a weak uniform $1$-gap in eigenvalues, but does \emph{not} have a strong uniform $1$-gap in eigenvalues, nor a uniform $1$-gap in singular values.
\end{ex}

\begin{proof}
We consider the standard generating set $F=\{a,b\}$.
Any element $\gamma$ of $BS(1,2)$ can be written uniquely as $\gamma=b^{-m}a^Nb^n$ where $m,n\in\N$ and $N\in\Z$, with $mn=0$ as soon as $N$ is even; we call this the \emph{normal form} of~$\gamma$.
For any $m,n,N\in\ZZ$ we have
$$(\lambda_1-\lambda_2)(\rho(b^{-m}a^Nb^n)) = (\lambda_1-\lambda_2)(B^{-m}A^NB^n) = |n-m|\,\log 2.$$
On the other hand, there are uniform constants $C,C'>0$ such that for any $\gamma=b^{-m}a^Nb^n$ in normal form,
\begin{equation} \label{eqn:BS-length}
C^{-1}\big(m+n+\log |N|\big) - C' \leq |\gamma|_F \leq C\big(m+n+\log |N|\big) + C'
\end{equation}
(see \eg \cite[Prop.\,2.1]{be15}).
Furthermore, one easily checks by induction that for any $k\geq 1$,
$$(b^{-m}a^Nb^n)^k =
\left\{ \begin{array}{ll}
b^{-m} \, a^{N(2^{k(n-m)}-1)/(2^{n-m}-1)} \, b^{k(n-m)+m} & \text{ if }n-m\geq 0,\\
b^{-k(m-n)-n} \, a^{N(2^{k(m-n)}-1)/(2^{m-n}-1)} \, b^n & \text{ if }m-n\geq 0,
\end{array} \right.$$
hence $|(b^{-m}a^Nb^n)^k|_F \leq k \, C (1+\log 2) \, |n-m| + (C\log |N| + C (m+n) + C')$.
We deduce that the stable length satisfies, for any $\gamma=b^{-m}a^Nb^n$ in normal form,
$$|\gamma|_{F,\infty} \leq C (1+\log 2) \, |n-m|.$$
Therefore, for any $\gamma\in BS(1,2)$,
$$(\lambda_1-\lambda_2)(\rho(\gamma)) \geq \frac{\log 2}{C (1+\log 2)} \, |\gamma|_{F,\infty}.$$
This shows that $\rho$ has a weak uniform $1$-gap in eigenvalues.

On the other hand, $\rho$ does \emph{not} have a strong uniform $1$-gap in eigenvalues.
Indeed, considering normal forms and using \eqref{eqn:BS-length} we see that for any $N\in\ZZ$,
$$\ell_F(a^N) = \inf_{\beta\in\Gamma} |\beta^{-1}a^N\beta|_F = \inf_{n\in\NN} |b^{-n}a^Nb^n|_F\geq C^{-1}\,\log |N| - C',$$
which goes to infinity with~$N$, whereas $(\lambda_1-\lambda_2)(\rho(a^N))=(\lambda_1-\lambda_2)(A^N)=0$ for all $N\in\ZZ$.

Since $\Gamma$ is not word hyperbolic, Fact \ref{fact:Ano-sg-gap} implies that $\rho$ does not have a uniform $1$-gap in singular values.
\end{proof}

However, we ask the following question:

\begin{quest} \label{q:eig-gap-hyperb}
Let $\Gamma$ be a finitely generated group, $\rho : \Gamma\to\GL(d,\RR)$ a \emph{faithful and discrete} representation, and $1\leq i\leq d-1$ an integer.
If $\rho$ has a (strong) uniform $i$-gap in eigenvalues, must $\Gamma$ be word hyperbolic?
\end{quest}

This question appears to be related to the following property.

\begin{defi} \label{def:prop-U-groups}
A group~$\Gamma$ with finite generating subset~$F$ has \emph{strong} (\resp \emph{weak}) \emph{property~(U)} (for `undistorted in its conjugacy classes') if there exist a finite subset $S'$ of~$\Gamma$ and constants $c,c'>0$ such that for any $\gamma \in \Gamma$,
$$\max_{s\in S'} |s\gamma|_{F,\infty} \geq c \, |\gamma|_F - c' \quad\quad\text{(\resp }\quad \max_{s\in S'} \ell_F(s\gamma) \geq c \, |\gamma|_F - c'\text{)}.$$
\end{defi}

This does not depend on the choice of~$F$, by Remark~\ref{rem:change-gen-set}.
Strong property~(U) implies weak property~(U) by \eqref{eqn:stable-transl-length}.
Weak property~(U) was introduced in \cite{DGLM}, where it was simply called property~(U); we already mentioned it in the proof of Proposition~\ref{prop:eig-sing-gap-group}. 

\begin{ex} \label{ex:prop-U-groups}
A finitely generated group~$\Gamma$ has strong property~(U) as soon as one of the following holds:
\begin{enumerate}
  \item $\Gamma$ is abelian (in this case $|\gamma|_{F,\infty} = \ell_F(\gamma) = |\gamma|_F$ for all $\gamma\in\Gamma$),
  \item $\Gamma$ is word hyperbolic (see \cite[Prop.\,2.2.1]{DGLM} and \eqref{eqn:stable-transl-length-hyp}),
  \item\label{item:prop-U-QI-ss} there exists a representation $\rho : \Gamma\to\GL(d,\RR)$ which is a quasi-isometric embedding and such that the Zariski closure of $\rho(\Gamma)$ in $\GL(d,\RR)$ is reductive.
\end{enumerate}
\end{ex}

\begin{proof}[Proof of Example~\ref{ex:prop-U-groups}.\eqref{item:prop-U-QI-ss}]
Since $\rho$ is a quasi-isometric embedding, there exist\linebreak $c_0,c_0'>0$ such that $c_0 |\gamma|_F - c_0' \leq \Vert\mu(\rho(\gamma))\Vert \leq c_0^{-1} |\gamma|_F + c_0'$ for all $\gamma\in\Gamma$.
In particular, $|\gamma|_{F,\infty} = \lim_n |\gamma^n|_F/n \geq c_0 \, \lim_n \Vert\mu(\rho(\gamma^n))\Vert/n = c_0\,\Vert\lambda(\rho(\gamma))\Vert$ (see \eqref{eqn:lambda-mu-relation}).
By Fact~\ref{fact:Benoist}, there exist a finite subset $S$ of $\Gamma$ and $M>0$ such that for any $\gamma\in\Gamma$ we can find $s\in S$ with $\| \mu(\rho(\gamma)) - \lambda(\rho(s\gamma)) \| \leq M$; we then have
$$c_0^{-1} |s\gamma|_{F,\infty} \geq \Vert\lambda(\rho(s\gamma))\Vert \geq \Vert\mu(\rho(\gamma))\Vert - M \geq c_0 |\gamma|_F - (c'_0 + M). \qedhere$$
\end{proof}

A similar proof was given in \cite[\S\,2.1.4]{DGLM} to establish weak property~(U) in Example~\ref{ex:prop-U-groups}.\eqref{item:prop-U-QI-ss}, in the general setting of reductive Lie groups over any local field.

\begin{prop} \label{prop:strong-U-gap-hyp}
Let $\Gamma$ be a finitely generated group with weak (\resp strong) property~(U).
If some representation $\rho : \Gamma \to \GL(d,\RR)$ has a strong (\resp weak) uniform $i$-gap in eigenvalues for some $1\leq i\leq d-1$, then $\Gamma$ is word hyperbolic.
\end{prop}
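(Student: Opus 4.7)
The plan is to show that under these hypotheses $\rho$ is $P_i$-Anosov; word hyperbolicity of $\Gamma$ will then follow immediately from Fact~\ref{fact:Ano-sg-gap}. The strategy mirrors the proof of Proposition~\ref{prop:eig-sing-gap-group}, with strong property~(U) playing the role that hyperbolicity of~$\Gamma$ played there (via \cite[Prop.\,2.2.1]{DGLM}).

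First I reduce to the case where the Zariski closure $H$ of $\rho(\Gamma)$ is reductive. Replacing $\rho$ by its semisimplification $\rho^{ss} = \pi^{ss}\circ\rho$ from Section~\ref{ss.semisimp} does not affect the strong uniform $i$-gap in eigenvalues, since $\rho^{ss}(\gamma)$ is a limit of conjugates of $\rho(\gamma)$ and so $\lambda(\rho^{ss}(\gamma)) = \lambda(\rho(\gamma))$ for every $\gamma\in\Gamma$; meanwhile property~(U) is a property of $\Gamma$ alone and the conclusion of the proposition does not involve $\rho$, so the reduction is legitimate. By Remark~\ref{rem:gaps}.\eqref{item:gap-displacing}, the strong eigenvalue gap makes $\rho$ displacing in the sense of~\cite{DGLM}; combined with strong property~(U), the key lemma \cite[Lem.\,2.0.1]{DGLM} (the same input used in the proof of Proposition~\ref{prop:eig-sing-gap-group}) then produces constants $c_0,c_0'>0$ with $\|\mu(\rho(\gamma))\| \geq c_0\,|\gamma|_F - c_0'$ for all $\gamma\in\Gamma$, i.e.\ $\rho$ is a quasi-isometric embedding.

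From here I simply replay the cone argument of Section~\ref{subsec:proof-eig-sing-gap-group}. Applying the strong eigenvalue gap to powers $\gamma^n$, dividing by~$n$, and letting $n\to\infty$ gives the weak version $(\lambda_i - \lambda_{i+1})(\rho(\gamma)) \geq c\,|\gamma|_{F,\infty}$; combined with the elementary bound $\|\lambda(\rho(\gamma))\|\leq M\,|\gamma|_{F,\infty}$ coming from~\eqref{eqn:mu-subadd} and~\eqref{eqn:lambda-mu-relation}, this shows that the limit cone $\cC_\lambda(\rho(\Gamma))$ avoids the $i$-th wall of $\mathfrak{a}^+$ outside the origin. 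Since $H$ is reductive, Fact~\ref{fact:Benoist} yields $\cC_\mu(\rho(\Gamma))=\cC_\lambda(\rho(\Gamma))$, and the finite-set argument at the end of Section~\ref{subsec:proof-eig-sing-gap-group} then produces $c_1,c_1'>0$ with $(\mu_i - \mu_{i+1})(\rho(\gamma)) \geq c_1\,\|\mu(\rho(\gamma))\| - c_1'$ for all $\gamma\in\Gamma$. Composing this with the quasi-isometric embedding estimate of the previous paragraph gives a uniform $i$-gap in singular values for~$\rho$, and Fact~\ref{fact:Ano-sg-gap} concludes that $\Gamma$ is word hyperbolic.

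The only point requiring care is verifying that each step in the proof of Proposition~\ref{prop:eig-sing-gap-group} goes through after replacing ``$\Gamma$ is word hyperbolic'' by ``$\Gamma$ has strong property~(U)''; this works because hyperbolicity was only used in that proof to invoke \cite[Prop.\,2.2.1]{DGLM} and extract strong property~(U), which is now part of the hypothesis. There is no genuinely new estimate to obtain.
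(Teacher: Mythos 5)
Your proposal is correct and follows exactly the route of the paper, whose proof of Proposition~\ref{prop:strong-U-gap-hyp} simply says that ``the same proof as that of Proposition~\ref{prop:eig-sing-gap-group}'' yields a uniform $i$-gap in singular values and then invokes Fact~\ref{fact:Ano-sg-gap}. You have merely made explicit the verification that hyperbolicity entered the proof of Proposition~\ref{prop:eig-sing-gap-group} only through strong property~(U) (and through the equivalence of weak and strong eigenvalue gaps, which your strong-gap hypothesis replaces), which is precisely the point of the paper's one-line argument.
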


\begin{proof}
Suppose $\Gamma$ has weak property~(U) and $\rho : \Gamma \to \GL(d,\RR)$ has a strong uniform $i$-gap in eigenvalues.
The same proof as that of Proposition~\ref{prop:eig-sing-gap-group} in Section~\ref{subsec:proof-eig-sing-gap-group} shows that $\rho$ has a uniform $i$-gap in singular values.
Therefore, $\Gamma$ is word hyperbolic by Fact~\ref{fact:Ano-sg-gap}.

Suppose $\Gamma$ has strong property~(U) and $\rho : \Gamma \to \GL(d,\RR)$ has a weak uniform $i$-gap in eigenvalues.
Arguing exactly as in the proof of Proposition~\ref{prop:eig-sing-gap-group} in Section~\ref{subsec:proof-eig-sing-gap-group}, but replacing the translation length $\ell_F$ by the stable length $|\cdot|_{F,\infty}$ everywhere in the fourth paragraph of the proof, we see that $\rho$ has a uniform $i$-gap in singular values.
Therefore, $\Gamma$ is word hyperbolic by Fact~\ref{fact:Ano-sg-gap}.
\end{proof}

In other words, a representation $\rho: \Gamma \to \GL(d,\RR)$ can never have a strong (\resp weak) uniform $i$-gap in eigenvalues if $\Gamma$ has weak (\resp strong) property~(U) but is not word hyperbolic.

\subsection{Proof of Theorem~\ref{thm:charact-domin-splitting-hyp-gp}} \label{sec:proof-teoC}

For the direct implication, see Fact~\ref{fact:domin->Lyap-gap}.
We now prove the converse implication.
Suppose $(\sigma,\Phi_{\rho})$ has a uniform $i$-gap of Lyapunov exponents (Definition~\ref{def:unif-gap-Lyap-exp}).
Since $\Gamma$ is word hyperbolic, it is well known (see \eg \cite[Th.\,5.1]{ef97}) that there exists $N>0$ such that for any $\gamma\in\Gamma$ of infinite order, the element $\gamma^N$ is conjugate to some element $\beta\in\Gamma$ with $|\beta|_{F,\infty} = |\beta|_F$.
Let $\sx_{\beta}\in\cG_F$ be a corresponding periodic element: namely, if $\beta=f_0\cdots f_{n-1}$ with $f_0,\dots,f_{n-1}\in F\cup F^{-1}$ and $n=|\beta|_F$, then $\sx_{\beta}=(f'_k)_{k\in\ZZ}$ with $f'_{\ell n+m}=f_m$ for all $\ell\in\ZZ$ and $0\leq m\leq n-1$.
We have
$$\Phi_{\rho}^{(n)}(\sx_{\beta}) = \Phi_{\rho}(\sigma^{n-1}(\sx_{\beta})) \cdots \Phi_{\rho}(\sx_{\beta}) = \rho(f_0 \cdots f_{n-1})^{-1} = \rho(\beta)^{-1}.$$

Suppose $(\sigma,\Phi_{\rho})$ has a uniform $i$-gap of Lyapunov exponents, \ie there exists $c>0$ such that $\chi_i(\nu) \geq \chi_{i+1}(\nu) + c$ for all ergodic $\sigma$-invariant measures $\nu$ on~$\cG_F$.
We claim that $\rho$ has a uniform $i$-gap in eigenvalues (Definition~\ref{def:gaps} and Remark \ref{rem:gaps}.\eqref{item:strong-weak-gap-hyp}).
Indeed, for $\gamma\in\Gamma$ of finite order, we have $|\gamma|_{F,\infty}=0$, and so we may restrict to $\gamma\in\Gamma$ of infinite order.
For such an element~$\gamma$, let $\beta=f_0\cdots f_{n-1}\in\Gamma$ and $\sx_{\beta}\in\cG_F$ be as above.
We first note that, using \eqref{eqn:mu-lambda-inverse},
\begin{align*}
N \, (\lambda_{d-i} - \lambda_{d-i+1})(\rho(\gamma)) & = (\lambda_{d-i} - \lambda_{d-i+1})(\rho(\gamma^N))\\
& = (\lambda_i - \lambda_{i+1})(\rho(\beta)^{-1}) = (\lambda_i - \lambda_{i+1})(\Phi_{\rho}^{(n)}(\sx_{\beta}))
\end{align*}
and $n = |\beta|_F = |\beta|_{F,\infty} = |\gamma^N|_{F,\infty} = N \, |\gamma|_{F,\infty}$.
On the other hand, applying the assumption on ergodic invariant measures to the measure $\nu_{\sx_{\beta}}$ of \eqref{eqn:nu-x} and using \eqref{eqn:Lyap-exp-periodic-meas}, we obtain
$$(\lambda_i - \lambda_{i+1})(\Phi_{\rho}^{(n)}(\sx_{\beta})) = n \, (\chi_i - \chi_{i+1})(\nu_{\sx_{\beta}}) \geq c n,$$
where $c>0$ is independent of~$\gamma$, hence $(\lambda_{d-i} - \lambda_{d-i+1})(\rho(\gamma)) \geq c n/N = c  |\gamma|_{F,\infty}$.
This proves that $\rho$ has a uniform $(d-i)$-gap in eigenvalues, hence a uniform $i$-gap in eigenvalues by Remark~\ref{rem:gaps}.\eqref{item:i-(d-i)-gap}.

By Proposition~\ref{prop:eig-sing-gap-group}, the representation $\rho$ has a uniform $i$-gap in singular values.
By Fact~\ref{fact-BG}, the linear cocycle $(\sigma,\Phi_{\rho})$ has a dominated splitting of index~$i$.

This completes the proof of Theorem~\ref{thm:charact-domin-splitting-hyp-gp}.

\begin{remark}
The subshifts $\cG_F$ associated to word hyperbolic groups as in \eqref{eqn:G-F-group} that we consider in this section are not necessarily of finite type for the alphabet~$F$, as the following example shows.
\end{remark}

\begin{ex}
Let $\Gamma$ be a group with a finite generating subset $F$ such that any reduced word in the alphabet $F\cup F^{-1}$ which is trivial in~$\Gamma$ involves at least three different elements of $F\cup F^{-1}$.
For instance, we can take $\Gamma$ to be the fundamental group of a closed surface of genus $g\geq 2$, with its standard generating subset $F=\{a_1,b_1,\dots,a_g,b_g\}$ subject to the relation $[a_1,b_1]\cdots [a_g,b_g]=e$.
We assume that $\Gamma$ is not the free group generated by~$F$.
Suppose by contradiction that the subshift $\cG_F$ is of finite type, \ie there is a square matrix $A =(a_{f,f'})_{f,f'\in F \cup F^{-1}}$ of $0$'s and $1$'s such that
$$\cG_F = \big\{ (f_k)_{k\in\ZZ} \in (F \cup F^{-1})^{\ZZ} \ : a_{f_k,f_{k+1}} = 1 \quad \forall k\in\ZZ\big\}.$$
Let $R=f_0\cdots f_{n-1}$ be a cyclically reduced word in the alphabet $F\cup F^{-1}$ which is trivial in~$\Gamma$.
Since any reduced word which is trivial in~$\Gamma$ involves at least three different elements of $F\cup F^{-1}$, for any $0 \leq j \leq n-1$ and $k\in\NN$ we have $|(f_j f_{j+1})^k|_F=2k$ (where we set $f_n:=f_0$) and so $a_{f_j,f_{j+1}}=1$. 
Therefore the element $(f'_k)\in (F \cup F^{-1})^{\ZZ}$ with $f'_{\ell n+m}=f_m$ for all $\ell\in\ZZ$ and $0\leq m\leq n-1$ belongs to~$\cG_F$.
But $R=f_0\cdots f_{n-1}=e$: contradiction.
\end{ex}

In general, as explained in \cite[\S\,5]{BPS}, for any word hyperbolic group~$\Gamma$ we can find a finite generating subset $F$ such that $\cG_F$ is what is called a \emph{sofic subshift}.
Sofic subshifts are a mild generalization of subshifts of finite type, which also have the specification property of Fact~\ref{fact-specification}.

\section{Anosov representations for semigroups} \label{sec:Ano-semig}

In this section, we consider the notion of an Anosov semigroup homomorphism from Definition~\ref{def:Anosov-semig}.
We investigate a few basic aspects of it, namely the existence of a boundary map (Lemma~\ref{lem:semig-Ano-bound-map}) defined on the so-called \emph{quasigeodesic boundary} of the semigroup (Section~\ref{subsec:semig-QG-bound}), the openness of the set of Anosov semigroup homomorphisms (Corollary~\ref{cor:Ano-open-semig}), and the link with eigenvalue gaps (Proposition~\ref{prop:eig-gap-semig}).
We point out some differences with the group case (Section~\ref{subsec:semig-vs-gp}), and focus in particular on one important class of semigroups, namely completely simple ones (Section~\ref{subsec:compl-simple-semig}).
Our goal here is not to be exhaustive but to lay some foundations; we hope to develop a more thorough theory of Anosov semigroup homomorphisms in further work.

\subsection{Definitions} \label{subsec:semig-def-Anosov}

Let $\semig$ be a semigroup with a finite generating subset~$F$, \ie any element of~$\semig$ can be written as a product of elements of~$F$.
The semigroup $\semig$ may admit an identity element~$e$ (in which case it is called a monoid) or not.
For $\gamma \in\nolinebreak \semig \smallsetminus\nolinebreak \{e\}$ we define the \emph{word length} of~$\gamma$ to be
$$|\gamma|_F = \min \{ k\in\NN^* \ : \ \exists f_0,\dots,f_{k-1}\in F\ \mathrm{s.t.}\ \gamma=f_0\cdots f_{k-1}\}$$
and the \emph{stable length} of~$\gamma$ to be $|\gamma|_{F,\infty} = \lim_n |\gamma^n|/n$; we also set $|e|_F = |e|_{F,\infty} = 0$.
Note that there is no clear a priori definition of translation length.
For any $\gamma,\gamma'\in\semig$, we have $|\gamma\gamma'|_F \leq |\gamma|_F + |\gamma'|_F$.
The following holds similarly to Remark~\ref{rem:change-gen-set}.

\begin{remark} \label{rem:change-gen-set-semig}
If $F'$ is another finite generating subset of~$\semig$, then there exists $M \geq 1$ such that $M^{-1} \, |\gamma|_{F'} \leq |\gamma|_F \leq M \, |\gamma|_{F'}$ and $M^{-1} \, |\gamma|_{F',\infty} \leq |\gamma|_{F,\infty} \leq M \, |\gamma|_{F',\infty}$ for all $\gamma\in\semig$.
\end{remark}

Similarly to Definition~\ref{def:gaps}, we introduce the following terminology.

\begin{defi} \label{def:gaps-semig}
For $1\leq i\leq d-1$, a semigroup homomorphism $\rho : \semig\to\GL(d,\RR)$ has
\begin{itemize}
  \item a \emph{uniform $i$-gap in singular values} if there exist $c,c'>0$ such that
  \begin{equation} \label{eqn:sv-gap-semig}
  (\mu_i - \mu_{i+1})(\rho(\gamma)) \geq c \, |\gamma|_F - c'  \ \ \ \ \ \ \forall \gamma \in \semig ,
  \end{equation}
  \item a \emph{uniform $i$-gap in eigenvalues} if there exist $c,c'>0$ such that
  \[(\lambda_i - \lambda_{i+1})(\rho(\gamma)) \geq c |\gamma|_{F,\infty} - c'  \ \ \ \ \ \ \forall \gamma \in \semig . \]
\end{itemize}
\end{defi}

These notions do not depend on the choice of finite generating set~$F$, by Remark~\ref{rem:change-gen-set-semig}.
As in the introduction (Definition~\ref{def:Anosov-semig}), we say that a semigroup homomorphism $\rho : \semig\to\GL(d,\RR)$ is \emph{$P_i$-Anosov} if it has a uniform $i$-gap in singular values.
By an \emph{Anosov representation} we mean a semigroup homomorphism which is $P_i$-Anosov for some $1\leq i\leq d-1$.

\begin{remark} \label{rem:Ano-discrete}
If $\rho : \semig\to\GL(d,\RR)$ is $P_i$-Anosov, then its image is discrete in $\GL(d,\RR)$, in the sense that its intersection with any compact subset of $\GL(d,\RR)$ is at most finite: indeed, the function $\mu_i-\mu_{i+1}$ is continuous on $\GL(d,\RR)$, hence bounded on compact sets, and the uniform $i$-gap in singular values implies that for any $R>0$ the set of $\gamma\in\semig$ with $(\mu_i-\mu_{i+1})(\rho(\gamma))\leq R$ is finite.
Moreover, if we set $C := c/\sqrt{2} > 0$ and $C' := c'/\sqrt{2} > 0$ with $(c,c')$ as in \eqref{eqn:sv-gap-semig}, then
\begin{equation} \label{eqn:QI-semig}
\Vert\mu(\rho(\gamma)\Vert \geq C \, |\gamma|_F - C'  \ \ \ \ \ \ \forall \gamma \in \semig .
\end{equation}
\end{remark}

\begin{remark} \label{rem:Ano-GL-SL}
A semigroup homomorphism $\rho : \semig\to\GL(d,\RR)$ is $P_i$-Anosov if and only if the following semigroup homomorphism is:
\begin{equation} \label{eqn:rho-1}
\begin{array}{cccc}
\rho_1 : & \semig & \longrightarrow & \SL^{\pm}(d,\RR)\\
& \gamma & \longmapsto &\rho(\gamma)/|\det(\rho(\gamma))|^{1/d}.
\end{array}
\end{equation}
Indeed, $(\mu_i - \mu_{i+1})(\rho(\gamma)) = (\mu_i - \mu_{i+1})(\rho_1(\gamma))$ for all $\gamma \in \semig$.
\end{remark}

\subsection{Anosov semigroup homomorphisms and dominated splittings}

Similarly to \eqref{eqn:G-F-group}, we set
$$\cG_F := \big\{ (f_k) \in F^{\NN} \ : \ |f_k \cdots f_{k+\ell}|_F = \ell+1 \quad \forall k,\ell\in\NN\big\}.$$
For $\kappa\geq 1$ and $\kappa'\geq 0$, we also set
\begin{equation} \label{eqn:def-QG-kappa-kappa'}
\cQ\cG_F^{\kappa,\kappa'} := \big\{ (f_k) \in F^{\NN} \ : \ |f_k \cdots f_{k+\ell}|_F \geq \kappa^{-1} (\ell+1) - \kappa' \quad \forall k,\ell\in\NN\big\}.
\end{equation}
Then $\cG_F$ and $\cQ\cG_F^{\kappa,\kappa'}$ are closed subsets of $F^{\NN}$, and for $\hat \kappa \geq \kappa$ and $\hat \kappa' \geq \kappa'$ we have $\cG_F = \cQ\cG_F^{1,0} \subset \cQ\cG_F^{\kappa, \kappa'} \subset \cQ\cG_F^{\hat \kappa, \hat \kappa'}$.
We set
\begin{equation} \label{eqn:def-QG-F}
\cQ\cG_F := \bigcup_{\kappa \geq 1,\,\kappa'\geq 0} \cQ\cG_F^{\kappa,\kappa'}.
\end{equation}
See Corollary~\ref{cor:QG-F-semigroup} below for a geometric interpretation of the set $\cQ\cG_F$ in terms of quasigeodesic rays in the Cayley graph $\mathrm{Cay}(\semig,F)$.

Let $\sigma : \cQ\cG_F\to\cQ\cG_F$ be the shift, sending $\sx=(f_k)_{k\in\NN}$ to $\sy=(f'_k)_{k\in\NN}$ where $f'_k=f_{k+1}$ for all $k\in\NN$.
For any semigroup homomorphism $\rho : \semig \to \GL(d,\RR)$, we define a map $\Phi_{\rho} : \cQ\cG_F \to \GL(d,\RR)$ by
\begin{equation} \label{eqn:cocycle}
 \Phi_{\rho}((f_k)_{k\in\NN}) := \rho(f_0)^{-1}. 
 \end{equation}
 As in Section~\ref{sec:proof-teoC}, for any $n\in\NN^*$ and $(f_k)_{k\in\NN}\in\cQ\cG_F$, using the notation \eqref{eqn:Phi-(n)} with $T=\sigma$, we have
\begin{equation} \label{eqn:Phi-rho-n-semig}
\Phi_{\rho}^{(n)}((f_k)_{k\in\NN}) = \rho(f_0 \cdots f_{n-1})^{-1}.
\end{equation}

\begin{prop} \label{prop:Ano-sv-gap-semig}
Let $\semig$ be a semigroup with finite generating subset~$F$, let $1\leq i<d$ be integers, and let $\rho : \semig\to\GL(d,\RR)$ be a $P_i$-Anosov semigroup homomorphism.
Then
\begin{enumerate}
  \item\label{item:Ano-sv-gap-semig-1} for any $\kappa \geq 1$ and $\kappa'\geq 0$ the linear cocycle $(\sigma, \Phi_{\rho})$ over $\cQ\cG_F^{\kappa,\kappa'}$ has a dominated splitting of index $d-i$;
  \item\label{item:Ano-sv-gap-semig-2} for any $\kappa \geq 1$ and $\kappa'\geq 0$, there exist $C'', C''' > 0$ such that the $P_i$-Anosov semigroup homomorphism $\rho_1 : \semig\to\SL^{\pm}(d,\RR)$ of Remark~\ref{rem:Ano-GL-SL} satisfies that for any $n,m\in\NN^*$ and any $(f_k)_{k\in\NN} \in \cQ\cG_F^{\kappa,\kappa'}$,
  $$\big\Vert \mu(\rho_1(f_0\cdots f_{n+m-1})) - \mu(\rho_1(f_0\cdots f_{n-1})) \big\Vert \geq C'' m - C''' \, ;$$
  \item\label{item:Ano-sv-gap-semig-3} the map
  \begin{eqnarray*}
  E^{cs} :\hspace{0.5cm} \cQ\cG_F\, & \longrightarrow & \hspace{1cm}\mathrm{Gr}_i(\RR^d)\\
  (f_k)_{k\in\NN} & \longmapsto & \lim_{n\to +\infty} \Xi_i\big(\rho(f_0 \cdots f_{n-1})\big)
  \end{eqnarray*}
  is well defined and $(\sigma,\Phi_{\rho})$-equivariant; its restriction to $\cQ\cG_F^{\kappa,\kappa'}$ is continuous for all $\kappa \geq 1$ and $\kappa'\geq 0$.
\end{enumerate}
\end{prop}

\begin{proof}
By assumption, there exist $c,c'>0$ such that $(\mu_i - \mu_{i+1})(\rho(\gamma)) \geq c \, |\gamma|_F - c'$ for all $\gamma \in \semig$.
Fix $\kappa\geq 1$ and $\kappa'\geq 0$.
Using \eqref{eqn:Phi-rho-n-semig} and \eqref{eqn:mu-lambda-inverse}, we see that for any $n\in\NN^*$ and any $(f_k)_{k\in\NN} \in \cQ\cG_F^{\kappa,\kappa'}$,
\begin{eqnarray*}
(\mu_{d-i} - \mu_{d-i-1})\big(\Phi_{\rho}^{(n)}(\sx)\big) & = & (\mu_{d-i} - \mu_{d-i-1})(\rho(f_0 \cdots f_{n-1})^{-1})\\
& = & (\mu_i - \mu_{i+1})(\rho(f_0 \cdots f_{n-1}))\\
& \geq & c \, |f_0 \cdots f_{n-1}|_F - c'\\
& \geq & c \kappa^{-1} \, n - (c \kappa' + c').
\end{eqnarray*}
By Fact~\ref{fact-BG}, the linear cocycle $(\sigma, \Phi_{\rho})$ over $\cQ\cG_F^{\kappa,\kappa'}$ has a dominated splitting of index $d-i$ and the map $E^{cs} : \cQ\cG_F^{\kappa,\kappa'} \to \mathrm{Gr}_i(\RR^d)$ sending $(f_k)_{k\in\NN} $ to $\lim_n \Xi_i(\rho(f_0 \cdots f_{n-1}))$ is well defined, continuous, and $(\sigma,\Phi_{\rho})$-equivariant.
We thus obtain \eqref{item:Ano-sv-gap-semig-1}, as well as \eqref{item:Ano-sv-gap-semig-3} since $\cQ\cG_F$ is the union of the $\cQ\cG_F^{\kappa,\kappa'}$ over all pairs $(\kappa,\kappa')$.
Finally, \eqref{item:Ano-sv-gap-semig-2} is a direct consequence of Lemma~\ref{lem:domin-CLI}.\eqref{item:domin-CLI} together with \eqref{eqn:Phi-rho-n-semig} and \eqref{eqn:mu-lambda-inverse}.
\end{proof}
                     
We shall see that for given $\kappa\geq 1$ and $\kappa'\geq 0$, a converse to Proposition~\ref{prop:Ano-sv-gap-semig}.\eqref{item:Ano-sv-gap-semig-1} holds (Proposition~\ref{prop:sv-gap-semig-Ano}) under some condition which we call property~(D).

\subsection{The quasigeodesic boundary} \label{subsec:semig-QG-bound}

We now introduce a space $\partial \semig$ on which boundary maps for Anosov representations of~$\semig$ will later be defined (see Section~\ref{subsec:semig-bound-map}).

\subsubsection{The Cayley graph}

Recall that the semigroup $\semig$ may admit an identity element~$e$ (in which case it is called a monoid) or not.
We denote by $\semig^e$ the monoid obtained from~$\semig$ by possibly adding an identity element~$e$.
As in \cite{dg04}, we define the Cayley graph $\mathrm{Cay}(\semig,F)$ to have vertices $\semig^e$, with a directed edge from $\gamma$ to $\gamma f$ for every $\gamma\in\semig^e$ and $f\in F$.
Each edge is given a local metric in which it has unit length, and $\mathrm{Cay}(\semig,F)$ is turned into a metric space by defining the distance $d_F(x,y)$ between two points $x,y$ to be the length of the shortest undirected path joining them.
Note that for any $\gamma,\gamma'\in\semig$,
\begin{equation} \label{eqn:bound-dF}
d_F(\gamma\gamma',\gamma\gamma'') \leq d_F(\gamma',\gamma'') \quad\quad\mathrm{and}\quad\quad d_F(\gamma\gamma',\gamma) \leq |\gamma'|_F.
\end{equation}
The inequalities may be strict, as in the following example.

\begin{ex} \label{ex:semig-strict-ineq}
Let $\semig$ be the monoid obtained by taking the quotient of the free monoid $\semig_{\{a,b\}}$ on the alphabet $F=\{a,b\}$ by the congruence generated by $\mathcal{R} := \{ (ab^3, a)\} \subset \semig_{\{a,b\}} \times \semig_{\{a,b\}}$ (in other words, $\semig$ is the largest quotient of $\semig_{\{a,b\}}$ in which $ab^3 = a$: see \cite[\S\,1.5]{how95}).
Then $d_F(ab^3,a) = 0 < d_F(b^3,e) = |b^3|_F = 3$.
\end{ex}

Remark~\ref{rem:change-gen-set-semig} implies the following.

\begin{remark} \label{rem:change-gen-set-semig-Cay}
If $F'$ is another finite generating subset of~$\semig$, then there exists $M \geq 1$ (given by Remark~\ref{rem:change-gen-set-semig}) such that $M^{-1} \, d_{F'}(\gamma_1,\gamma_2) \leq d_F(\gamma_1,\gamma_2) \leq M \, d_{F'}(\gamma_1,\gamma_2)$ for all $\gamma_1,\gamma_2\in\semig$.
In particular, $\mathrm{Cay}(\semig,F)$ and $\mathrm{Cay}(\semig,F')$ are quasi-isometric.
\end{remark}

\subsubsection{Quasigeodesic rays}

For $\kappa\geq 1$ and $\kappa'\geq 0$, a path $(\gamma_n) \in \semig^{\NN^*}$ is said to be a \emph{$(\kappa,\kappa')$-quasigeodesic ray in $\mathrm{Cay}(\semig,F)$} if for any $k,\ell\in\NN^*$,
\begin{equation} \label{eqn:quasigeod}
\kappa^{-1}\ell - \kappa' \leq d_F(\gamma_k,\gamma_{k+\ell}) \leq \kappa\ell + \kappa'.
\end{equation}
Note that a $(\kappa,\kappa')$-quasigeodesic ray is also a $(\hat \kappa, \hat \kappa')$-quasigeodesic ray for any $\hat \kappa \geq \kappa$ and $\hat \kappa' \geq \kappa'$.
In the sequel we will be interested in paths $(\gamma_n)_{n\in\NN^*}$ satisfying $\gamma_{n+1} \in \gamma_n F$ for all $n\in\NN^*$, \ie there is a directed edge from $\gamma_n$ to $\gamma_{n+1}$ in $\mathrm{Cay}(\semig,F)$ for all $n\in\NN^*$; the right-hand inequality in \eqref{eqn:quasigeod} is then automatically satisfied.

\subsubsection{The path map $\mathcal{P}$} \label{subsubsec:QG-rays-Cay}

Consider the map $\mathcal{P} : \semig^{\NN} \to \semig^{\NN^*}$ defined by
\begin{equation} \label{eqn:def-P}
\mathcal{P}((g_k)_{k\in\NN}) := (g_0 \cdots g_{n-1})_{n\in\NN^*}.
\end{equation}

\begin{remark} \label{rem:QG-group}
Suppose $\semig$ is a group and $F$ is symmetric (\ie $F = F^{-1}$), and view $\semig$ as a semigroup with finite generating subset~$F$.
Then for any $\kappa\geq 1$ and $\kappa'\geq 0$, and any $(f_k)_{k\in\NN}\in F^{\NN}$, the path $(\gamma_n)_{n\in\NN^*} = \mathcal{P}((f_k)_{k\in\NN})$ is a $(\kappa,\kappa')$-quasigeodesic ray in $\mathrm{Cay}(\semig,F)$ as in \eqref{eqn:quasigeod} if and only if $(f_k)_{k\in\NN} \in \cQ\cG_F^{\kappa,\kappa'}$ as in \eqref{eqn:def-QG-kappa-kappa'}.
Indeed, in this case $d_F(\gamma_k, \gamma_{k+\ell}) = d_F(e,\gamma_k^{-1} \gamma_{k+\ell}) = |f_k \cdots f_{k+\ell-1}|_F$ for all $k,\ell\in\NN$.
\end{remark}

For semigroups $\semig$ which are not groups, quasigeodesic rays are less immediate to describe, as the equality $d_F(\gamma_k, \gamma_{k+\ell}) = |f_k \cdots f_{k+\ell-1}|_F$ in Remark~\ref{rem:QG-group} is only an inequality in general (see \eqref{eqn:bound-dF}), which may be strict.
Still, the following holds.

\begin{lema} \label{lem:QG-semig-Cayley-graph}
Let $\semig$ be a semigroup with finite generating subset~$F$.
Let $\kappa \geq 1$ and $\kappa'\geq 0$.
\begin{enumerate}
  \item\label{item:QG-semig-1} For any $(f_k)_{k\in\NN}\in F^{\NN}$, if $\mathcal{P}((f_k)_{k\in\NN})$ is a $(\kappa,\kappa')$-quasigeodesic ray in $\mathrm{Cay}(\semig,F)$, then $(f_k)_{k\in\NN}\in\cQ\cG_F^{\kappa,\kappa'}$.
  \item\label{item:QG-semig-2} Conversely, suppose that $\semig$ admits an Anosov representation, or more generally (Proposition~\ref{prop:Ano-sv-gap-semig}.\eqref{item:Ano-sv-gap-semig-2}) that there exist a semigroup homomorphism\linebreak $\rho : \semig\to\GL(d,\RR)$ and constants $C'',C'''>0$ such that
  \begin{equation} \label{eqn:Ano-sv-gap-semig-mu}
  \big\Vert \mu(\rho(f_0\cdots f_{n+m-1})) - \mu(\rho(f_0\cdots f_{n-1})) \big\Vert \geq C'' m - C'''
  \end{equation}
  for all $n,m\in\NN^*$ and $(f_k)_{k\in\NN} \in \cQ\cG_F^{\kappa,\kappa'}$.
  Then there exist $\kappa''\geq 1$ and $\kappa'''\geq 0$ with the following property: for any $(f_k)_{k\in\NN} \in F^{\NN}$, if $(f_k)_{k\in\NN} \in \cQ\cG_F^{\kappa,\kappa'}$, then $\mathcal{P}((f_k)_{k\in\NN})$ is a $(\kappa'',\kappa''')$-quasigeodesic ray in $\mathrm{Cay}(\semig,F)$.
\end{enumerate}
\end{lema}

Here is an immediate consequence of Lemma~\ref{lem:QG-semig-Cayley-graph}.

\begin{cor} \label{cor:QG-F-semigroup}
Let $\semig$ be a semigroup with finite generating subset~$F$.
Suppose that $\semig$ admits an Anosov representation $\rho : \semig\to\GL(d,\RR)$.
Then $\mathcal{P}(\cQ\cG_F)$ is the set of quasigeodesic rays in $\mathrm{Cay}(\semig,F)$ of the form $(\gamma_n)_{n\in\NN^*} \in \semig^{\NN^*}$ with $\gamma_1 \in F$ and $\gamma_{n+1} \in \gamma_n F$ for all $n\in\NN^*$.
\end{cor}

Lemma~\ref{lem:QG-semig-Cayley-graph}.\eqref{item:QG-semig-2} relies on the following observation.

\begin{remark} \label{rem:mu-dist-Cayley-graph}
Let $\rho : \semig\to\GL(d,\RR)$ be a semigroup homomorphism.
For any $\gamma,\gamma'\in\nolinebreak\semig$, we have $\| \mu(\rho(\gamma)) - \mu(\rho(\gamma')) \| \leq d_F(\gamma,\gamma') \, \max_{f\in F} \|\mu(\rho(f))\|$.
Indeed, setting $R := d_F(\gamma,\gamma') \in \NN$, there exists a path $(\gamma_j)_{0\leq j\leq R}$ in $\mathrm{Cay}(\semig,F)$ with $\gamma_0 = \gamma$ and $\gamma_R = \gamma'$, and with $\gamma_{j+1} \in \gamma_j F$ or $\gamma_j \in \gamma_{j+1} F$ for all $0\leq j\leq R-1$.
By \eqref{eqn:mu-subadd} we have $\Vert\mu(\rho(\gamma_{j+1})) - \mu(\rho(\gamma_j))\Vert \leq \max_{f\in F} \|\mu(\rho(f))\|$ for all~$j$, and we conclude by triangle inequality.
\end{remark}

\begin{proof}[Proof of Lemma~\ref{lem:QG-semig-Cayley-graph}]
\eqref{item:QG-semig-1} Consider $(f_k)_{k\in\NN}\in F^{\NN}$ and $(\gamma_n)_{n\in\NN^*} := \mathcal{P}((f_k)_{k\in\NN}) \in \semig^{\NN^*}$.
By \eqref{eqn:bound-dF}, for any $k,\ell\in\NN^*$ we have
$$d_F(\gamma_k,\gamma_{k+\ell}) = d_F(\gamma_k, \gamma_k f_k\cdots f_{k+\ell-1}) \leq |f_k\cdots f_{k+\ell-1}|_F.$$
Therefore, if $(\gamma_n)_{n\in\NN^*}$ satisfies \eqref{eqn:quasigeod}, then $(f_k)_{k\in\NN}$ belongs to the set $\cQ\cG_F^{\kappa,\kappa'}$ of \eqref{eqn:def-QG-kappa-kappa'}.

\eqref{item:QG-semig-2} Suppose there exist $\rho : \semig\to\GL(d,\RR)$ and $C'',C'''>0$ such that \eqref{eqn:Ano-sv-gap-semig-mu} holds for all $n,m\in\NN^*$ and $(f_k)_{k\in\NN} \in \cQ\cG_F^{\kappa,\kappa'}$.
We may assume $C''\leq 1$.
Let $M:=\max_{f\in F} \Vert\mu(\rho(f))\Vert$.
By \eqref{eqn:Ano-sv-gap-semig-mu} and Remark~\ref{rem:mu-dist-Cayley-graph}, for any $(f_k)_{k\in\NN} \in \cQ\cG_F^{\kappa,\kappa'}$, setting $(\gamma_n)_{n\in\NN^*} := \mathcal{P}((f_k)_{k\in\NN}) \in \semig^{\NN^*}$, we have
$$C'' \ell - C''' \leq  \big\Vert\mu\big(\rho(\gamma_{k+\ell})\big) - \mu\big(\rho(\gamma_k)\big)\big\Vert \leq M \, d_F(\gamma_k, \gamma_{k+\ell})$$
for all $k,\ell\in\NN^*$, hence $(\gamma_n)_{n\in\NN^*}$ satisfies \eqref{eqn:quasigeod} with $(\kappa,\kappa')$ replaced by $(\kappa'', \kappa''') := ({C''}^{-1} M, C''' M^{-1})$.
\end{proof}

Lemma~\ref{lem:QG-semig-Cayley-graph}.\eqref{item:QG-semig-2} provides a simple obstruction for the existence of Anosov representations of semigroups as in the example below.
Other obstructions follow from Section~\ref{subsec:compl-simple-semig} for the class of completely simple semigroups. 

\begin{ex} \label{ex:semig-no-Ano-rep}
Let $\semig$ be the monoid obtained by taking the quotient of the free monoid $\semig_{\{a,b,c\}}$ on the alphabet $F=\{a,b,c\}$ by the congruence generated by $\mathcal{R} := \{ ((bc^n)^na, ac) \,:\, n\in\NN^*\} \subset \semig_{\{a,b,c\}} \times \semig_{\{a,b,c\}}$.
Note that the elements $b$ and~$c$ generate a free subsemigroup of~$\semig$.
Consider $(f_k)_{k\in\NN}\in\{b,c\}^{\NN}\subset F^{\NN}$ such that the elements $\gamma_k = f_{0}  \cdots f_{k-1}$ satisfy $\gamma_{\ell_n} = bc (bc^2)^2 (bc^3)^3 \cdots (bc^n)^n$ whenever $\ell_n = 2 + 3^2 + 4^3 + \ldots + (n+1)^n$.
Then $(f_k)_{k \in \NN} \in \cG_F = \cQ\cG^{1,0}_F$.
On the other hand, $\gamma_{\ell_n}a = a c^n$, and so the triangle inequality and \eqref{eqn:bound-dF} imply $d_F(e,\gamma_{\ell_n}) \leq  d_F(e, \gamma_{\ell_n} a) + d_F(\gamma_{\ell_n} a, \gamma_{\ell_n}) \leq d_F(e, a c^n) + d_F(a, e) \leq n+2$, which implies that $(\gamma_n)_{n\in\NN^*} := \mathcal{P}((f_k)_{k\in\NN}) \in \semig^{\NN^*}$ is not a quasigeodesic ray in $\mathrm{Cay}(\semig,F)$.
By Lemma~\ref{lem:QG-semig-Cayley-graph}.\eqref{item:QG-semig-2}, the semigroup $\semig$ cannot admit an Anosov representation.
In fact, $\semig$ cannot even admit a representation $\rho : \semig\to\GL(d,\RR)$ satisfying \eqref{eqn:QI-semig} for some $C,C'>0$, since $|\gamma_{\ell_n}a|_F = |ac^n|_F \leq n+1$ grows much slower with~$n$ than $|\gamma_{\ell_n}|_F = \ell_n$, and so the left-hand side of \eqref{eqn:QI-embed-ineq-length} in Remark~\ref{rem:QI-embed-ineq-length} below cannot be satisfied for $(\gamma',\gamma'') = (\gamma_{\ell_n},a)$ for all~$n$.
\end{ex}

\subsubsection{An equivalence relation on $\semig^{\NN^*}$}

We shall say that $(\gamma_n)_{n\in\NN^*}, (\gamma'_n)_{n\in\NN^*} \in \semig^{\NN^*}$ \emph{remain at bounded distance in the Cayley graph} if there exists $N>0$ such that for any $n\in\NN^*$ we can find $\varphi_n,\varphi'_n\in\NN^*$ with $d_F(\gamma_n, \gamma'_{\varphi_n})\leq N$ and $d_F(\gamma'_n, \gamma_{\varphi'_n})\leq N$. 
One readily checks that this defines an equivalence relation $\sim$ on $\semig^{\NN^*}$.
It does not depend on the choice of finite generating subset~$F$, by Remark~\ref{rem:change-gen-set-semig-Cay}.

\subsubsection{The quasigeodesic boundary}

Recall the path map $\mathcal{P} : \semig^{\NN} \to \semig^{\NN^*}$ from Section~\ref{subsubsec:QG-rays-Cay}.
We define the \emph{quasigeodesic boundary} of~$\semig$ to be
\begin{equation} \label{eqn:def-bound-semig}
\partial \semig := \mathcal{P}(\cQ\cG_F)/_{\sim}.
\end{equation}

\begin{lema} \label{lem:QG-bound-semig-F-F'}
The quasigeodesic boundary $\partial \semig$ does not depend on the choice of finite generating subset $F$ of~$\semig$.
\end{lema}

\begin{proof}
By symmetry, it is sufficient to check that if $F'$ is another finite generating subset of~$\semig$, then for any $(f_k)_{k\in\NN} \in \cQ\cG_F$, there exists $(f'_a)_{a\in\NN} \in \cQ\cG_{F'}$ such that $\mathcal{P}((f_k)_{k\in\NN}) \sim \mathcal{P}((f'_a)_{a\in\NN})$.
For this we choose, for each $f\in F$, an expression $f = f'_{(f,0)}\cdots f'_{(f,m_f-1)}$ of $f$ as a product of elements $f'_{(f,i)}$ of~$F'$.

We start with the following observation: for $(f_k)_{k\in\NN} \in F^{\NN}$, let $(f'_a)_{a\in\NN}\in {F'}^{\NN}$ be given by $f'_{m_{f_0}+\dots+m_{f_{k-1}}+i} := f'_{(f_k,i)}$ for all $k\in\NN$ and $0\leq i<m_{f_k}$.
Then for any $k,\ell\in\NN$,
\begin{equation} \label{eqn:f-f'}
f_k\cdots f_{k+\ell} = f'_{m_{f_0}+\dots+m_{f_{k-1}}} \cdots f'_{m_{f_0}+\cdots+m_{f_{k+\ell}}-1}.
\end{equation}
In particular, using \eqref{eqn:bound-dF} and the fact that the $m_f$ are uniformly bounded, we see that $\mathcal{P}((f_k)_{k\in\NN}) \sim \mathcal{P}((f'_a)_{a\in\NN})$.

It remains to check that if $(f_k)_{k\in\NN} \in \cQ\cG_F$, then $(f'_a)_{a\in\NN}\in\cQ\cG_{F'}$.
Let $M := \max(\max_{f\in F} m_f, \max_{f'\in F'} |f'|_F) \geq 1$.
We have $|\gamma|_F \leq M |\gamma|_{F'}$ for all $\gamma\in\semig$.
We claim that if $(f_k)_{k\in\NN} \in \cQ\cG_F^{\kappa,\kappa'}$, where $\kappa\geq 1$ and $\kappa'\geq 0$, then $(f'_a)_{a\in\NN}\in\cQ\cG_{F'}^{\kappa'',\kappa'''}$ where $\kappa'' = M^2\kappa$ and $\kappa''' = M^{-1}\kappa' + 2M$.
Indeed, let $a,b\in\NN$.
We can write $a = m_{f_0} + \dots + m_{f_{k-1}} + i$ and $a+b = m_{f_0} + \dots + m_{f_{k+\ell-1}} + j$ for some $k,\ell\in\NN$ and $0\leq i<m_{f_k}$ and $0\leq j<m_{f_{k+\ell}}$.
In particular,
$$b+1 = m_{f_k} + \dots + m_{f_{k+\ell-1}} + j+1 - i \leq (\ell+1)\,M.$$
Using \eqref{eqn:f-f'}, we see that
$$|f_k\cdots f_{k+\ell}|_{F'} \leq i + |f'_a\cdots f'_{a+b}|_{F'} + (m_{f_{k+\ell}} - j) \leq |f'_a\cdots f'_{a+b}|_{F'} + 2M,$$
hence
\begin{eqnarray*}
|f'_a\cdots f'_{a+b}|_{F'} & \geq & M^{-1} |f_k\cdots f_{k+\ell}|_F - 2M\\
& \geq & M^{-1} \big(\kappa^{-1} (\ell+1) - \kappa'\big) - 2M\\
& \geq & {\kappa''}^{-1} (b+1) - \kappa'''.
\end{eqnarray*}
This shows that $(f'_a)_{a\in\NN}\in\cQ\cG_{F'}^{\kappa'',\kappa'''}$.
\end{proof}

Recall from Remark~\ref{rem:change-gen-set-semig} that for $\gamma\in\semig$, the condition $|\gamma|_{F,\infty}>0$ does not depend on the choice of finite generating subset $F$ of~$\semig$ (even though the actual value of $|\gamma|_{F,\infty}$ does); we shall write this condition as $|\gamma|_{\infty}>0$.

\begin{lema} \label{lem:eta-gamma+}
Let $\semig$ be a finitely generated semigroup.
For any $\gamma\in\semig$ with $|\gamma|_{\infty}>0$, the element $\eta_{\gamma}^+ := [(\gamma^n)_{n\in\NN^*}] \in \semig^{\NN^*}\!/_{\sim}$ belongs to $\partial \semig$.
\end{lema}

\begin{proof}
Let $F$ be a finite generating subset of~$\semig$.
Consider $\gamma\in\semig$ with $|\gamma|_{F,\infty}>0$: there exist $\kappa \geq 1$ and $\kappa' \geq 0$ such that $|\gamma^n|_F \geq \kappa^{-1}\,n - \kappa'$ for all $n\in\NN$.
Write $\gamma=f^{\gamma}_0\cdots f^{\gamma}_{N-1}$ for some $f^{\gamma}_i\in F$, and consider $(f_k)_{k\in\NN} \in F^{\NN}$ such that $f_k := f^{\gamma}_i$ whenever $k \equiv i$ modulo~$N$.
Then $f_0\cdots f_{nN-1} = \gamma^n$ for all $n\in\NN^*$, hence, using \eqref{eqn:bound-dF}, we see that $\mathcal{P}((f_k)_{k\in\NN}) \sim (\gamma^n)_{n\in\NN^*}$.
It remains to show that $(f_k)_{k\in\NN} \in \cQ\cG_F$.
For this, for $k,\ell\in\NN$, write $k = mN+i$ and $k+\ell = (m+n)N+j$ where $m,n\in\NN$ and $0\leq i,j<N$.
We have $\ell+1 = nN + j+1 - i \leq (n+1)\,N$ and
$$|\gamma^{n+1}|_F = |f_{mN} \cdots f_{(m+n+1)N-1}|_F \leq i + |f_k \cdots f_{k+\ell}|_F + (N-1-j) \leq |f_k \cdots f_{k+\ell}|_F + 2N,$$
hence
$$|f_k\cdots f_{k+\ell}|_F \geq |\gamma^{n+1}|_F - 2N \geq \kappa^{-1}\,(n+1) - (\kappa'+2N) \geq {\kappa''}^{-1} (\ell+1) - \kappa'''$$
where $\kappa'' := \kappa N$ and $\kappa''' := \kappa'+2N$.
This shows that $(f_k)_{k\in\NN}\in\cQ\cG_{F'}^{\kappa'',\kappa'''}$.
\end{proof}

\subsubsection{Action of $\semig$ on the quasigeodesic boundary}

The semigroup $\semig$ naturally acts on $\semig^{\NN^*}$ by left multiplication: $\gamma \cdot (\gamma_n)_{n\in\NN^*} := (\gamma\gamma_n)_{n\in\NN^*}$.
By \eqref{eqn:bound-dF}, this action factors through the equivalence relation~$\sim$.

In order to obtain an action of $\semig$ on $\partial \semig$, we would like the set $\mathcal{P}(\cQ\cG_F)$ to be stable under the action of~$\semig$.
This is always the case if $\semig$ is a group $\Gamma$ with finite symmetric generating subset~$F$, because in this case $|\gamma' \gamma''|_F \geq |\gamma''|_F - |{\gamma'}^{-1}|_F = |\gamma''|_F - |\gamma'|_F$ for all $\gamma',\gamma''\in\Gamma$.
In a general semigroup $\semig$ with finite generating subset~$F$, the inequality $|\gamma' \gamma''|_F \geq |\gamma''|_F - |\gamma'|_F$ need not always hold (see Example~\ref{ex:semig-strict-ineq} with $\gamma' = a$ and $\gamma'' = b^3$).
Still, if $\semig$ admits an Anosov representation, or more generally (Remark~\ref{rem:Ano-discrete}) a representation satisfying \eqref{eqn:QI-semig} for some $C,C'>0$, then the following weaker form of the inequality holds.

\begin{remark} \label{rem:QI-embed-ineq-length}
Suppose that $\semig$ admits a representation $\rho : \semig\to\GL(d,\RR)$ such that there exist $C,C'>0$ with $\Vert\mu(\rho(\gamma)\Vert \geq C \, |\gamma|_F - C'$ for all $\gamma\in\semig$, as in \eqref{eqn:QI-semig}.
Let $M := \max_{f\in F} \Vert\mu(\rho(f))\Vert > 0$ and $(c,c') := (C M^{-1}, C' M^{-1})$.
Then for any $\gamma',\gamma''\in\semig$,
\begin{equation} \label{eqn:QI-embed-ineq-length}
|\gamma' \gamma''|_F \geq c |\gamma'|_F - c' - |\gamma''|_F \quad\quad\mathrm{and}\quad\quad |\gamma' \gamma''|_F \geq c |\gamma''|_F - c' - |\gamma'|_F.
\end{equation}
Indeed, let $(\gamma_1,\gamma_2)$ be $(\gamma',\gamma'')$ or $(\gamma'',\gamma')$.
By \eqref{eqn:mu-subadd} and \eqref{eqn:mu-lambda-inverse} we have $\Vert\mu(\rho(\gamma_2)^{-1})\Vert = \Vert\mu(\rho(\gamma_2))\Vert \leq M |\gamma_2|_F$ and $\Vert\mu(\rho(\gamma_1))\Vert -\nolinebreak \Vert\mu(\rho(\gamma_2)^{-1})\Vert \leq \Vert\mu(\rho(\gamma' \gamma''))\Vert \leq M |\gamma' \gamma''|_F$.
Therefore
$$M |\gamma' \gamma''|_F \geq \Vert\mu(\rho(\gamma_1))\Vert - \Vert\mu(\rho(\gamma_2)^{-1})\Vert \geq C |\gamma_1|_F - C' - M |\gamma_2|_F.$$
\end{remark}

We deduce the following.

\begin{lema} \label{lem:semig-bound-equiv}
Suppose that $\semig$ admits an Anosov representation, or more generally (Remark~\ref{rem:Ano-discrete}) a representation $\rho : \semig\to\GL(d,\RR)$ such that there exist $C,C'>0$ with $\Vert\mu(\rho(\gamma)\Vert \geq C \, |\gamma|_F - C'$ for all $\gamma\in\semig$, as in \eqref{eqn:QI-semig}.
Then the action of $\semig$ on $\semig^{\NN^*}\!/_{\sim}$ restricts to an action of $\semig$ on $\partial \semig$.
\end{lema}

\begin{proof}
By Remark~\ref{rem:QI-embed-ineq-length}, there exist $c,c'>0$ such that $|\gamma' \gamma''|_F \geq c |\gamma''|_F - c' - |\gamma'|_F$ for all $\gamma',\gamma''\in\semig$.
In particular, for any $\gamma' \in \semig$, any $\kappa\geq 1$, any $\kappa'\geq 0$, any $(f_k)_{k\in\NN} \in \cQ\cG_F^{\kappa,\kappa'}$, and any $n\in\NN^*$,
\begin{equation} \label{eqn:length-gamma-f-i}
|\gamma' f_0 \cdots f_{n-1}|_F \geq c \kappa^{-1} n - (c\kappa' + c' + |\gamma'|_F).
\end{equation}

Consider an element $\gamma\in\semig$ and a decomposition $\gamma=f^{\gamma}_0\cdots f^{\gamma}_{N-1}$ with $f^{\gamma}_i\in F$.
For $(f_k)_{k\in\NN}\in\cQ\cG_F^{\kappa,\kappa'}$ with $\kappa\geq 1$ and $\kappa'\geq 0$, let $(g_a)_{a\in\NN}\in F^{\NN}$ be given by $g_a=f^{\gamma}_a$ for $0\leq a\leq N-1$ and $g_a=f_{a-N}$ for $a\geq N$.
Then $g_0 \cdots g_{a-1} = \gamma f_0 \cdots f_{a-N-1}$ for all $a\geq N+1$, and so $\mathcal{P}((g_a)_{a\in\NN}) \sim \gamma\cdot\mathcal{P}((f_k)_{k\in\NN})$.
It remains to show that $(g_a)_{a\in\NN}\in\cQ\cG_F$.
Consider $a,b\in\NN$.
If $a+b\leq N-1$, then $|g_a \cdots g_{a+b}|_F \geq 0 \geq (b+1) - N$.
If $a\leq N-1<a+b$, then $g_a \cdots g_{a+b} = (f_a^{\gamma} \cdots f_{N-1}^{\gamma}) f_0 \cdots f_{a+b-N}$, and so \eqref{eqn:length-gamma-f-i} implies
$$|g_a \cdots g_{a+b}|_F \geq c \kappa^{-1} (a+b-N+1) - (c\kappa' + c' + (N-a)) \geq c \kappa^{-1} (b+1) - \kappa''$$
where $\kappa'' = c \kappa^{-1} N + c \kappa' + c' + N$.
If $a\geq N$, then $|g_a \cdots g_{a+b}|_F = |f_{a-N} \cdots f_{a+b-N}|_F \geq \kappa^{-1} (b+1) - \kappa'$.
We deduce $(g_a)_{a\in\NN}\in\cQ\cG_F$.
This shows that $\partial \semig = \mathcal{P}(\cQ\cG_F)/_{\sim}$ is stable under the action of~$\semig$.
\end{proof}

\begin{remarks} \label{rem:partial-semig-hyp-Cayley-graph}
\begin{enumerate}
  \item By construction, any $\gamma\in\semig$ with $|\gamma|_{\infty}>0$ fixes the point $\eta_{\gamma}^+ := [(\gamma^n)_{n\in\NN^*}] \in \partial \semig$ of Lemma~\ref{lem:eta-gamma+}.
  \item By Corollary~\ref{cor:QG-F-semigroup}, if $\semig$ admits an Anosov representation, then the quasigeodesic boundary $\partial\semig$ of~$\semig$, defined by \eqref{eqn:def-bound-semig}, can be seen as a subset of the set of equivalence classes of quasigeodesic rays in $\mathrm{Cay}(\semig,F)$ for the equivalence relation ``to remain at bounded distance''.
  It corresponds to the classes of the quasigeodesic rays `following directed edges', \ie quasigeodesic rays of the form $(\gamma_n)_{n\in\NN^*} \in \semig^{\NN^*}$ with $\gamma_1 \in F$ and $\gamma_{n+1} \in \gamma_n F$ for all $n\in\NN^*$.
  \item\label{item:partial-semig-hyp-3} If the proper geodesic metric space $\mathrm{Cay}(\semig,F)$ is Gromov hyperbolic, then the set of equivalence classes of quasigeodesic rays in $\mathrm{Cay}(\semig,F)$ identifies with the Gromov boundary of $\mathrm{Cay}(\semig,F)$ (see \cite[III.H.3]{bh99}).
  Thus, if $\semig$ admits an Anosov representation and $\mathrm{Cay}(\semig,F)$ is Gromov hyperbolic, then $\partial\semig$ can be seen as a subset of the Gromov boundary of $\mathrm{Cay}(\semig,F)$.
  \item\label{item:partial-semig-hyp-4} Suppose $\semig$ is a word hyperbolic group with a finite generating subset $F$ which is symmetric (\ie $F=F^{-1}$), and view $\semig$ as a semigroup with generating subset~$F$.
  Any point in the Gromov boundary of $\mathrm{Cay}(\semig,F)$ can be realized by a \emph{geodesic} (not only quasigeodesic) ray from~$e$ in $\mathrm{Cay}(\semig,F)$, hence of the form $(\gamma_n)_{n\in\NN} \in \semig^{\NN}$ with $\gamma_0 = e$ and $\gamma_{n+1} \in \gamma_n F$ for all $n\in\NN$.
  Using Remark~\ref{rem:QG-group}, we see that $\partial\semig$ identifies with the full Gromov boundary of $\mathrm{Cay}(\semig,F)$, \ie the boundary of~$\semig$.
\end{enumerate}
\end{remarks}

\subsection{The boundary map} \label{subsec:semig-bound-map}

Similar arguments to \cite[\S\,5]{GGKW} show the following.

\begin{lema} \label{lem:semig-Ano-bound-map}
Let $\semig$ be a finitely generated semigroup, let $1\leq i<d$ be integers, and let $\rho : \semig\to\GL(d,\RR)$ be a $P_i$-Anosov semigroup homomorphism.
The map $E^{cs} : \cQ\cG\to\mathrm{Gr}_i(\RR^d)$ of Proposition~\ref{prop:Ano-sv-gap-semig} induces a continuous $\rho$-equivariant map
$$\xi : \partial\semig \longrightarrow \mathrm{Gr}_i(\RR^d),$$
with the property that for any $\gamma\in\semig$ with $|\gamma|_{\infty}>0$, the image by $\xi$ of the fixed point $\eta_{\gamma}^+\in\partial\semig$ of Lemma~\ref{lem:eta-gamma+} is an attracting fixed point of $\rho(\gamma)$ in $\mathrm{Gr}_i(\RR^d)$.
\end{lema}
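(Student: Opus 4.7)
The plan is to transpose the arguments of \cite[\S\,5]{GGKW} to the semigroup setting, with Proposition~\ref{prop:Ano-sv-gap-semig} supplying the analytic backbone. Two ingredients suffice. (i)~If $d_F(\gamma,\gamma')\leq N$, a realizing undirected path in $\mathrm{Cay}(\semig,F)$ expresses $\rho(\gamma)^{-1}\rho(\gamma')$ as a product of at most $N$ elements of $\rho(F)\cup\rho(F)^{-1}$, so both $\|\rho(\gamma)^{-1}\rho(\gamma')\|$ and $\|\rho(\gamma')^{-1}\rho(\gamma)\|$ are bounded by $M^N$, where $M:=\max_{f\in F}\max(\|\rho(f)\|,\|\rho(f)^{-1}\|)$. (ii)~Two standard Cartan-attractor estimates (\eg \cite[App.\,A]{BPS}): in a regime where $(\mu_i-\mu_{i+1})(g)\to+\infty$ while $\|h^{\pm 1}\|$ stays bounded, $\Xi_i(gh)-\Xi_i(g)\to 0$ and $\Xi_i(hg)-h\cdot\Xi_i(g)\to 0$ in $\mathrm{Gr}_i(\RR^d)$.

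First I would show $E^{cs}$ factors through $\partial\semig$. Let $(f_k)\sim(g_k)$ with constant $N$ and subsequence $\varphi_n$ as in the definition of~$\sim$, and write $\rho(g_0\cdots g_{\varphi_n-1})=\rho(f_0\cdots f_{n-1})\,h_n$. Ingredient~(i) gives $\|h_n^{\pm 1}\|\leq M^N$, and since $(f_k)\in\cQ\cG_F$ and $\rho$ is $P_i$-Anosov the gap $(\mu_i-\mu_{i+1})(\rho(f_0\cdots f_{n-1}))$ tends to infinity; the first estimate of~(ii) then forces $\Xi_i(\rho(g_0\cdots g_{\varphi_n-1}))-\Xi_i(\rho(f_0\cdots f_{n-1}))\to 0$. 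As both full sequences converge by Proposition~\ref{prop:Ano-sv-gap-semig}, one concludes $E^{cs}((f_k))=E^{cs}((g_k))$. The induced map $\xi:\partial\semig\to\mathrm{Gr}_i(\RR^d)$ is continuous because $E^{cs}$ is and $\partial\semig$ carries the quotient topology.

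For $\rho$-equivariance, fix $\gamma=f^\gamma_0\cdots f^\gamma_{n-1}\in\semig$ and $(f_k)\in\cQ\cG_F$, and let $(g_a)\in\cQ\cG_F$ represent $\gamma\cdot[(f_k)]$ as in Lemma~\ref{lem:semig-bound-equiv}; then $\rho(g_0\cdots g_{a+n-1})=\rho(\gamma)\,\rho(f_0\cdots f_{a-1})$ for $a\geq 1$. Applying the second estimate of~(ii) with fixed $h=\rho(\gamma)$ and $g=\rho(f_0\cdots f_{a-1})$ (whose gap diverges) yields
\[\xi(\gamma\cdot[(f_k)])=\lim_{a\to\infty}\Xi_i(\rho(\gamma)\,\rho(f_0\cdots f_{a-1}))=\rho(\gamma)\cdot\xi([(f_k)]).\]

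Finally, for the dynamics-preserving property, the hypothesis $d_F(\gamma^n,f_0\cdots f_{\phi(n)-1})\leq N$, combined with the quasigeodesic lower bound on $\cQ\cG_F$ and the triangle inequality, forces $|\gamma^n|_F\to+\infty$; hence $|\gamma|_{F,\infty}>0$, and the $P_i$-Anosov hypothesis gives $(\mu_i-\mu_{i+1})(\rho(\gamma)^n)\geq c\,|\gamma^n|_F - c'$, which upon dividing by~$n$ and passing to the limit via \eqref{eqn:lambda-mu-relation} yields $\lambda_i(\rho(\gamma))>\lambda_{i+1}(\rho(\gamma))$. Therefore $\rho(\gamma)$ admits an attracting fixed point $E^+_\gamma\in\mathrm{Gr}_i(\RR^d)$, namely the sum of its generalized eigenspaces for the $i$ largest eigenvalue moduli, with $\Xi_i(\rho(\gamma)^n)\to E^+_\gamma$ by a Jordan-block computation. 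Writing $\rho(f_0\cdots f_{\phi(n)-1})=\rho(\gamma)^n h_n$ with $\|h_n^{\pm 1}\|\leq M^N$ by~(i) and applying the first estimate of~(ii) once more gives $\Xi_i(\rho(f_0\cdots f_{\phi(n)-1}))\to E^+_\gamma$; since the full limit defining $\xi(\eta_\gamma^+)$ exists, it must equal~$E^+_\gamma$. The main obstacle is the bookkeeping at each of the three stages, namely checking that the $M^N$-sized perturbations $h_n$ are indeed negligible against the diverging gap $\mu_i-\mu_{i+1}$; this is routine but must be verified separately each time.
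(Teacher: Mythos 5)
Your proposal is correct and follows essentially the same route as the paper: the same three-step structure (factorization through $\partial\semig$, equivariance, dynamics-preservation), with your two ``Cartan-attractor estimates'' being exactly the perturbation bounds $d_{\mathrm{Gr}_i}(\Xi_i(gm),\Xi_i(g))\lesssim e^{-(\mu_i-\mu_{i+1})(g)}$ and $d_{\mathrm{Gr}_i}(\Xi_i(hg),h\cdot\Xi_i(g))\lesssim e^{-(\mu_i-\mu_{i+1})(g)}$ that the paper quotes from \cite[Lem.\,5.8]{GGKW}, and your Jordan-block identification of the attracting fixed point replacing the citation of \cite[Lem.\,5.11]{GGKW}. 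The only detail worth making explicit in the factorization step is that $\varphi_n\to+\infty$ (so that the subsequence $\Xi_i(\rho(g_0\cdots g_{\varphi_n-1}))$ does converge to $E^{cs}((g_k))$), which follows from the properness of the quasigeodesic rays; this is exactly the routine bookkeeping you flag at the end.
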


By ``$\xi(\eta_{\gamma}^+)$ is an attracting fixed point of $\rho(\gamma)$ in $\mathrm{Gr}_i(\RR^d)$'' we mean that there is an open neighborhood $\mathcal{U}$ of $\xi(\eta_{\gamma}^+)$ in $\mathrm{Gr}_i(\RR^d)$ such that $\rho(\gamma)^n\cdot z \to \xi(\eta_{\gamma}^+)$ as $n\to +\infty$ for all $z\in\mathcal{U}$, uniformly on compact sets.
For word hyperbolic groups, this property was referred to as ``$\xi$ is dynamics-preserving'' in \cite{GGKW}, since in that case $\eta_{\gamma}^+$ is an attracting fixed point of $\gamma$ in the boundary of the group; a similar situation arises in the setting of finitely generated completely simple semigroups, see Proposition~\ref{prop:main-Rees-semig} and Section~\ref{subsec:compl-simple-semig} below.
For more general semigroups, the fact that $\xi(\eta_{\gamma}^+)$ is an attracting fixed point of $\rho(\gamma)$ in $\mathrm{Gr}_i(\RR^d)$ does not fully describe the dynamics of $\semig$ on $\partial\semig$ since the map $\xi$ is not necessarily injective (see Section~\ref{subsec:semig-vs-gp}); it still imposes some restrictions on the dynamics, and could possibly lead to obstructions to the existence of Anosov representations.

\begin{proof}[Proof of Lemma~\ref{lem:semig-Ano-bound-map}]
By construction, the continuous map $E^{cs} : \cQ\cG_F\to\mathrm{Gr}_i(\RR^d)$ of Proposition~\ref{prop:Ano-sv-gap-semig} factors through the path map $\mathcal{P}$ of \eqref{eqn:def-P}.
Let us check that if $(f_k)_{k\in\NN},(g_k)_{k\in\NN}\in\cQ\cG_F$ satisfy $\mathcal{P}((f_k)_{k\in\NN}) \sim \mathcal{P}((g_k)_{k\in\NN})$, then $E^{cs}((f_k)_{k\in\NN}) = E^{cs}((g_k)_{k\in\NN})$.
Since $\rho$ is $P_i$-Anosov, there exist $c,c'>0$ such that $(\mu_i-\nolinebreak\mu_{i+1})(\rho(\gamma)) \geq c\,|\gamma|_F - c'$ for all $\gamma\in\semig$.
Consider $(f_k)_{k\in\NN},(g_k)_{k\in\NN}\in\cQ\cG_F$ such that $\mathcal{P}((f_k)_{k\in\NN}) \sim \mathcal{P}((g_k)_{k\in\NN})$: there exists $N>0$ such that for any $n\in\NN^*$ we can find $\varphi_n\in\NN^*$ with
$d_F(f_0\cdots f_{n-1}, g_0\cdots g_{\varphi_n-1}) \leq N$.
Note that $\varphi_n\to +\infty$ as $n\to +\infty$, for otherwise some subsequence of $(g_0\cdots g_{\varphi_n-1})_{n\in\NN^*}$ would be bounded, hence some subsequence of $(f_0\cdots f_{n-1})_{n\in\NN^*}$ would be bounded, contradicting the fact that $(f_k)_{k\in\NN}\in\cQ\cG_F$.
Consider the finite subset
$$\mathcal{M} := \big\{\rho(f) \,: f\in F\big\} \cup \big\{\rho(f)^{-1} \,: f\in F\big\} \cup \{\mathrm{id}\}$$
of $\GL(d,\RR)$.
Arguing as in Remark~\ref{rem:mu-dist-Cayley-graph}, we see that $\rho(g_0\cdots g_{\varphi_n-1}) \in \rho(f_0\cdots f_{n-1}) \, \mathcal{M}^N$ for all $n\in\NN^*$.
By \cite[Lem.\,5.8.(i)]{GGKW}, there are a metric $d_{\mathrm{Gr}_i}$ on $\mathrm{Gr}_i(\RR^d)$ and a constant $C_{\mathcal{M}^N}>\nolinebreak 0$ such that for any $g\in\GL(d,\RR)$ and $m\in\mathcal{M}^N$,
\begin{equation} \label{eqn:d-Xi-right-mult}
d_{\mathrm{Gr}_i}\big(\Xi_i(g), \Xi_i(gm)\big) \leq C_{\mathcal{M}^N} \, e^{-(\mu_i-\mu_{i+1})(g)}.
\end{equation}
We deduce
\begin{eqnarray*}
d_{\mathrm{Gr}_i}\big(\Xi_i\big(\rho(f_0\cdots f_{n-1})\big), \Xi_i\big(\rho(g_0\cdots g_{\varphi_n-1})\big)\big) & \leq & C_{\mathcal{M}^N} \, e^{-(\mu_i-\mu_{i+1})(\rho(f_0\cdots f_{n-1}))}\\
& \leq & C_{\mathcal{M}^N} \, e^{-c\,|f_0\cdots f_{n-1}|_F + c'} \ \underset{n\to +\infty}{\longrightarrow} 0.
\end{eqnarray*}
Therefore $E^{cs}((f_k)_k) = \lim_n \Xi_i(\rho(f_0 \cdots f_{n-1})) = \lim_n \Xi_i(\rho(g_0 \cdots g_{\varphi_n-1})) = E^{cs}((g_k)_k)$.
This shows that the continuous map $E^{cs} : \cQ\cG_F\to\mathrm{Gr}_i(\RR^d)$ of Proposition~\ref{prop:Ano-sv-gap-semig} induces a map $\xi : \partial\semig = \cP(\cQ\cG_F)/_\sim \to \mathrm{Gr}_i(\RR^d)$.
This map is continuous by definition of the quotient topology.

The map $\xi$ is $\rho$-equivariant.
Indeed, for any $\gamma\in\semig$, we can write $\gamma=f^{\gamma}_0\cdots f^{\gamma}_{N-1}$ for some $f^{\gamma}_i\in F$.
By  definition of the action of~$\semig$ (Lemma~\ref{lem:semig-bound-equiv}), for $(f_k)_{k\in\NN}\in\cQ\cG_F$ we have $\gamma\cdot [(f_k)_{k\in\NN}] := [(g_a)_{a\in\NN}]$ where $g_a=f^{\gamma}_a$ for $0\leq a\leq N-1$ and $g_a=f_{a-N}$ for $a\geq N$, and 
$$E^{cs}((g_a)_a) = \lim_n\ \Xi_i\big(\rho(\gamma)\,\rho(f_0 \cdots f_{n-1})\big).$$
By \cite[Lem.\,5.8.(ii)]{GGKW}, there are a metric $d_{\mathrm{Gr}_i}$ on $\mathrm{Gr}_i(\RR^d)$ and a constant $C_{\gamma}>0$ such that for any $g\in\GL(d,\RR)$,
$$d_{\mathrm{Gr}_i}\big(\Xi_i\big(\rho(\gamma)g\big), \rho(\gamma)\cdot\Xi_i(g)\big) \leq C_{\gamma} \, e^{-(\mu_i-\mu_{i+1})(g)}.$$
By taking $g=\rho(f_0\cdots f_{n-1})$ and letting $n\to +\infty$, we see that $E^{cs}((g_a)_a) = \rho(\gamma) \cdot\nolinebreak E^{cs}((f_k)_k)$, and so $\xi(\gamma\cdot [(f_k)_{k\in\NN}])=\rho(\gamma)\cdot\xi([(f_k)_{k\in\NN}])$ by passing to the quotient.

For $\gamma\in\semig$ with $|\gamma|_{\infty} > 0$, let us check that $\xi(\eta_{\gamma}^+)$ is an attracting fixed point of $\rho(\gamma)$ in $\mathrm{Gr}_i(\RR^d)$.
The fact that $\rho$ is $P_i$-Anosov implies that $\rho$ has a uniform $i$-gap in eigenvalues (Definition~\ref{def:gaps-semig}), by \eqref{eqn:lambda-mu-relation} and the definition of stable length.
In particular, $\lambda_i(\rho(\gamma))> \lambda_{i+1}(\rho(\gamma))$, hence $\rho(\gamma)$ has a unique attracting fixed point in $\mathrm{Gr}_i(\RR^d)$.
By \cite[Lem.\,5.11]{GGKW}, this attracting fixed point is the limit of $\Xi_i(\rho(\gamma^n))$ as $n$ tends to infinity.
This is equal to $\xi(\eta_{\gamma}^+)$ by definition of $\xi$ and~$\eta_{\gamma}^+$.
\end{proof}

\subsection{Openness of the space of Anosov representations} \label{subsec:semig-dom-splitting}

We consider the following condition.

\begin{defi} \label{def:property-D}
The semigroup $\semig$ has \emph{property (D)} (for `density of uniform quasigeodesic rays following directed edges') if there exist $\kappa\geq 1$ and $\kappa',N\geq 0$ such that for any $\gamma \in \semig$ we can find $(f_k)_{k\in\NN} \in \cQ\cG_F^{\kappa,\kappa'}$ and $n\in\NN^*$ with $d_F(\gamma,f_0 \cdots f_{n-1})\leq N$.
\end{defi}

\begin{remark}
Property~(D) is independent of the choice of finite generating set~$F$.
Indeed, let $F'$ be another finite generating subset of~$\semig$.
By the proof of Lemma~\ref{lem:QG-bound-semig-F-F'}, for any $\kappa\geq 1$ and $\kappa'\geq 0$ there exist $\kappa''\geq 1$ and $\kappa'''\geq 0$ such that for any $(f_k)_{k\in\NN} \in \cQ\cG_F^{\kappa,\kappa'}$ we can find $(f'_a)_{a\in\NN} \in \cQ\cG_{F'}^{\kappa'',\kappa'''}$ with $\{ f_0 \cdots f_{n-1} \,:\, n\in\NN^*\} \subset \{ f'_0 \cdots f'_{m-1} \,:\nolinebreak m\in\NN^*\}$.
On the other hand, by Remark~\ref{rem:change-gen-set-semig-Cay} there exists $M\geq 1$ such that $d_{F'}(\gamma_1,\gamma_2)\leq M\,d_F(\gamma_1,\gamma_2)$ for all $\gamma_1,\gamma_2\in\semig$.
\end{remark}

Property~(D) is related to \emph{automatic properties} of the language generated by the semigroup~$\semig$ (as is the case for word hyperbolic groups and the Cannon automaton, see \cite[\S\,5]{BPS} and references therein).
For instance, any nonabelian free semigroup or abelian free semigroup with free generating subset~$F$ has property~(D) with $\kappa=1$ and $\kappa'=N=0$.

Under property~(D), the following converse to Proposition~\ref{prop:Ano-sv-gap-semig}.\eqref{item:Ano-sv-gap-semig-1} holds.

\begin{prop} \label{prop:sv-gap-semig-Ano}
Let $\semig$ be a finitely generated semigroup, $1\leq i<d$ integers, and $\rho : \semig\to\GL(d,\RR)$ a semigroup homomorphism.
Suppose $\semig$ has property~(D) with constants $\kappa \geq 1$ and $\kappa'\geq 0$.
If the linear cocycle $(\sigma,\Phi_{\rho})$ over $\cQ\cG_F^{\kappa,\kappa'}$ has a dominated splitting of index $d-i$, then $\rho$ is $P_i$-Anosov.
\end{prop}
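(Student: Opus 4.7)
The plan is to invoke Fact~\ref{fact-BG} to turn the dominated splitting hypothesis into a uniform singular value gap along quasigeodesic rays in $\cQ\cG_F^{\kappa,\kappa'}$, and then to use property~(D) to transport this estimate to every element of $\semig$ at the cost of bounded additive constants.

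First, by Fact~\ref{fact-BG} applied to the cocycle $(\sigma,\Phi_{\rho})$ over $\cQ\cG_F^{\kappa,\kappa'}$, there exist $C,C'>0$ such that
\[
(\mu_{d-i}-\mu_{d-i+1})\bigl(\Phi_{\rho}^{(n)}(\sx)\bigr)\ \geq\ Cn - C'
\qquad \forall\,n\in\NN,\ \forall\,\sx\in\cQ\cG_F^{\kappa,\kappa'}.
\]
Using \eqref{eqn:Phi-rho-n-semig} and the identities $\mu_j(g)=\mu_{d-j}(g^{-1})$ from \eqref{eqn:mu-lambda-inverse}, this translates into
\[
(\mu_i-\mu_{i+1})\bigl(\rho(f_0\cdots f_{n-1})\bigr)\ \geq\ Cn - C'
\qquad \forall\,(f_k)_{k\in\NN}\in\cQ\cG_F^{\kappa,\kappa'},\ \forall\,n\in\NN^*.
\]

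Next, fix $\gamma\in\semig$ and apply property~(D): there exist $(f_k)_{k\in\NN}\in\cQ\cG_F^{\kappa,\kappa'}$, an integer $n\in\NN^*$, and a universal constant $N\geq 0$ (independent of $\gamma$) such that $d_F(\gamma,\gamma_n)\leq N$, where $\gamma_n:=f_0\cdots f_{n-1}$. Since $|\gamma_n|_F\leq n$ and the triangle inequality in $\mathrm{Cay}(\semig,F)$ gives $|\gamma|_F\leq |\gamma_n|_F + N$, we obtain $n\geq |\gamma|_F - N$, and hence
\[
(\mu_i-\mu_{i+1})\bigl(\rho(\gamma_n)\bigr)\ \geq\ C\,|\gamma|_F - (CN+C').
\]

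Finally, to pass from $\gamma_n$ to $\gamma$, I apply Remark~\ref{rem:mu-dist-Cayley-graph} with the constant $M:=\max_{f\in F}\|\mu(\rho(f))\|$, yielding $\|\mu(\rho(\gamma))-\mu(\rho(\gamma_n))\|\leq NM$. Since for every $g,g'\in\GL(d,\RR)$ one has $|(\mu_i-\mu_{i+1})(g)-(\mu_i-\mu_{i+1})(g')|\leq 2\|\mu(g)-\mu(g')\|$, it follows that
\[
(\mu_i-\mu_{i+1})\bigl(\rho(\gamma)\bigr)\ \geq\ (\mu_i-\mu_{i+1})\bigl(\rho(\gamma_n)\bigr) - 2NM\ \geq\ C\,|\gamma|_F - (CN+C'+2NM),
\]
so \eqref{eqn:sv-gap-semig} holds with $c=C$ and $c'=CN+C'+2NM$, proving that $\rho$ is $P_i$-Anosov.

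There is no real obstacle here; the proof is essentially a bookkeeping argument. The only point worth flagging is that property~(D) must supply \emph{uniform} constants $\kappa,\kappa',N$ matching those for which domination has been assumed; this is exactly how Definition~\ref{def:property-D} is framed, so the two hypotheses mesh without issue.
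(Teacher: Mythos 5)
Your proof is correct and follows essentially the same route as the paper's: Fact~\ref{fact-BG} converts the dominated splitting into a linear lower bound on $(\mu_{d-i}-\mu_{d-i+1})(\Phi_{\rho}^{(n)}(\sx))$, the identity $\mu_j(g)=\mu_{d-j}(g^{-1})$ transfers this to $(\mu_i-\mu_{i+1})(\rho(f_0\cdots f_{n-1}))$, and property~(D) together with Remark~\ref{rem:mu-dist-Cayley-graph} transports the estimate to an arbitrary $\gamma\in\semig$ up to bounded additive error. Your explicit observation that $n\geq|\gamma|_F-N$ (so the bound is genuinely linear in $|\gamma|_F$) is a step the paper leaves implicit, but it is the same argument.
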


Propositions \ref{prop:Ano-sv-gap-semig}.\eqref{item:Ano-sv-gap-semig-1} and~\ref{prop:sv-gap-semig-Ano} together show that if $\semig$ has property~(D) with constants $\kappa \geq 1$ and $\kappa'\geq 0$ and if $(\sigma,\Phi_{\rho})$ has a dominated splitting of index $d-i$ as a linear cocycle over $\cQ\cG_F^{\kappa,\kappa'}$, then $(\sigma,\Phi_{\rho})$ actually has a dominated splitting of index $d-i$ as a linear cocycle over $\cQ\cG_F^{\kappa'',\kappa'''}$ for any $\kappa'' \geq 1$ and $\kappa'''\geq 0$.

\begin{proof}
If the linear cocycle $(\sigma, \Phi_\rho)$ over $\cQ\cG_F^{\kappa,\kappa'}$ has a dominated splitting of index $d-i$, then by Fact~\ref{fact-BG}, there exist $C,C'>0$ such that for any $n\in\NN$ and any $\sx\in\cQ\cG_F^{\kappa,\kappa'}$,
$$(\mu_{d-i} - \mu_{d-i+1})(\Phi_{\rho}^{(n)}(\sx)) \geq Cn - C'.$$
By property~(D), there exists $N\geq 0$ such that for any $\gamma \in \semig$ we can find $\sx = (f_k)_{k\in\NN} \in \cQ\cG_F^{\kappa,\kappa'}$ and $n\in\NN^*$ with $d_F(\gamma,f_0 \cdots f_{n-1})\leq N$.
By \eqref{eqn:mu-lambda-inverse} and \eqref{eqn:Phi-rho-n-semig},
\begin{eqnarray*}
(\mu_i - \mu_{i+1})\big(\rho(f_0 \cdots f_{n-1})\big) & = & (\mu_{d-i} - \mu_{d-i+1})\big(\rho(f_0 \cdots f_{n-1})^{-1}\big)\\
& = & (\mu_{d-i} - \mu_{d-i+1})\big(\Phi_{\rho}^{(n)}(\sx)\big)\ \geq\ Cn - C'.
\end{eqnarray*}
On the other hand, by Remark~\ref{rem:mu-dist-Cayley-graph}, the fact that $d_F(\gamma,f_0 \cdots f_{n-1})\leq N$ implies
$$ \| \mu(\rho(\gamma)) - \mu(\rho(f_0\cdots f_{n-1})) \| \leq C'' := N \, \max_{f\in F} \|\mu(\rho(f))\|, $$
and so $(\mu_i - \mu_{i+1})(\rho(\gamma)) \geq Cn - (C' + 2 C'')$.
This shows that $\rho$ has a uniform $i$-gap in singular values.
\end{proof}

Here is an immediate consequence of Fact~\ref{fact-dsopen} and Propositions \ref{prop:Ano-sv-gap-semig} and~\ref{prop:sv-gap-semig-Ano}.
We endow the space $\Hom(\semig,\GL(d,\RR))$ of semigroup homomorphisms from $\semig$ to $\GL(d,\RR)$ with the topology of uniform convergence on the finite generating subset~$F$; this topology does not depend on the choice of~$F$.

\begin{cor} \label{cor:Ano-open-semig}
Let $\semig$ be a finitely generated semigroup with property~(D).
For $1\leq i\leq d-1$, the space of $P_i$-Anosov semigroup homomorphisms is an open subset of $\Hom(\semig,\GL(d,\RR))$.
\end{cor}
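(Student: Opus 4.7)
The plan is to combine the two propositions stated just above with the openness of dominated splittings (Fact~\ref{fact-dsopen}), using property~(D) as the bridge that lets us work on a single fixed compact subshift.

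Fix $\rho_0 : \semig \to \GL(d,\RR)$ a $P_i$-Anosov semigroup homomorphism, and let $\kappa\in(0,1]$, $\kappa',N\geq 0$ be the constants provided by property~(D) for~$\semig$. First I would apply Proposition~\ref{prop:Ano-sv-gap-semig} to~$\rho_0$ with these specific constants: the $P_i$-Anosov hypothesis yields that the linear cocycle $(\sigma,\Phi_{\rho_0})$ over the closed (hence compact) subshift $\cQ\cG_F^{\kappa,\kappa'} \subset F^{\NN}$ admits a dominated splitting of index $d-i$. This is the essential point of choosing these particular constants: they are the ones that will match Proposition~\ref{prop:sv-gap-semig-Ano} later.

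Next I would invoke Fact~\ref{fact-dsopen}, which says that dominated splittings are open in the $C^0$ topology on the cocycle: there exists a neighborhood $\mathcal{U}$ of $\Phi_{\rho_0}$ in $C^0(\cQ\cG_F^{\kappa,\kappa'},\GL(d,\RR))$ such that every $\hat\Phi \in \mathcal{U}$ gives a dominated splitting of the same index $d-i$. Here the key observation, which is the one mildly nontrivial step, is that the assignment $\rho \mapsto \Phi_{\rho}$ from $\Hom(\semig,\GL(d,\RR))$ to $C^0(\cQ\cG_F^{\kappa,\kappa'},\GL(d,\RR))$ is continuous: since $\Phi_\rho((f_k)_{k\in\NN}) = \rho(f_0)^{-1}$ depends only on $f_0 \in F$ and $F$ is finite, uniform convergence on the generating set~$F$ (the topology on $\Hom(\semig,\GL(d,\RR))$) immediately forces uniform convergence of $\Phi_{\rho}$ on $\cQ\cG_F^{\kappa,\kappa'}$.

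Combining these two facts, I obtain an open neighborhood $\mathcal{V}$ of~$\rho_0$ in $\Hom(\semig,\GL(d,\RR))$ such that for every $\rho \in \mathcal{V}$ the cocycle $(\sigma,\Phi_\rho)$ over $\cQ\cG_F^{\kappa,\kappa'}$ has a dominated splitting of index $d-i$. Finally, Proposition~\ref{prop:sv-gap-semig-Ano}, applied with exactly the same constants $\kappa,\kappa'$ supplied by property~(D) for~$\semig$, concludes that each such $\rho$ is $P_i$-Anosov, which gives the desired openness. No step in this argument requires a genuinely new idea beyond being careful that the compact subshift on which we use Fact~\ref{fact-dsopen} is fixed once and for all by property~(D), so that the stability of the dominated splitting translates back into a singular-value gap via Proposition~\ref{prop:sv-gap-semig-Ano}.
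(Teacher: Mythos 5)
Your proposal is correct and follows exactly the route the paper intends: the paper presents the corollary as an immediate consequence of Fact~\ref{fact-dsopen} together with Propositions \ref{prop:Ano-sv-gap-semig} and~\ref{prop:sv-gap-semig-Ano}, and your argument fills in precisely those steps, including the (correct) observations that the property-(D) constants fix the compact subshift $\cQ\cG_F^{\kappa,\kappa'}$ once and for all and that $\rho\mapsto\Phi_\rho$ is continuous for the topology of uniform convergence on the finite set~$F$.
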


\subsection{Anosov representations and eigenvalue gaps} \label{subsec:gaps-sv-ev-semig}

Similarly to Definition~\ref{def:prop-U-groups}, we consider the following property, which is again independent of the choice of finite generating subset~$F$, by Remark~\ref{rem:change-gen-set-semig}.

\begin{defi} \label{def:property-U}
The semigroup $\semig$ has \emph{property (U)} if there exist a finite subset $S'$ of $\semig^e$ and constants $c,c'>0$ such that for any $\gamma \in \semig$,
$$\max_{s\in S'} |s\gamma|_{F,\infty} \geq c \, |\gamma|_F - c'.$$
\end{defi}

For instance, any nonabelian free semigroup or abelian free semigroup with free generating subset~$F$ has property~(U) with $S'=\{e\}$ and $(c,c')=(1,0)$.
Similarly to Proposition~\ref{prop:eig-sing-gap-group}, we prove the following.

\begin{prop} \label{prop:eig-gap-semig}
Let $\semig$ be a finitely generated semigroup, let $1\leq i<d$ be integers, and let $\rho : \semig\to\GL(d,\RR)$ be a semigroup homomorphism.
\begin{itemize}
  \item If $\rho$ is $P_i$-Anosov, then it has a uniform $i$-gap in eigenvalues.
  \item The converse holds as soon as $\semig$ has property~(U) and the Zariski closure of $\rho(\semig)$ in $\GL(d,\RR)$ is reductive, or as soon as $\semig$ has properties (D) and~(U).
\end{itemize}
\end{prop}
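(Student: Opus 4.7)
The direct implication is immediate: applying the uniform $i$-gap in singular values to $\gamma^n$ in place of $\gamma$, dividing by~$n$, and letting $n\to+\infty$, we use $\lambda(\rho(\gamma))=\lim_{n}\mu(\rho(\gamma^n))/n$ from~\eqref{eqn:lambda-mu-relation} together with $|\gamma|_{F,\infty}=\lim_{n}|\gamma^n|_F/n$ to obtain $(\lambda_i-\lambda_{i+1})(\rho(\gamma))\geq c\,|\gamma|_{F,\infty}$ for all $\gamma\in\semig$.

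For the converse in the reductive case, I would adapt the proof of Proposition~\ref{prop:eig-sing-gap-group}. Assume the Zariski closure of $\rho(\semig)$ is reductive and $\semig$ has property~(U). Letting $M:=\max_{f\in F}\|\mu(\rho(f))\|$, the subadditivity~\eqref{eqn:mu-subadd} gives $\|\mu(\rho(\gamma))\|\leq M\,|\gamma|_F$ and, by a power-and-limit argument using~\eqref{eqn:lambda-mu-relation}, also $\|\lambda(\rho(\gamma))\|\leq M\,|\gamma|_{F,\infty}$. Applying the eigenvalue-gap hypothesis to powers of~$\gamma$ produces $(\lambda_i-\lambda_{i+1})(\rho(\gamma))\geq cM^{-1}\|\lambda(\rho(\gamma))\|$, so the limit cone $\cC_\lambda(\rho(\semig))$ avoids the $i$-th wall of~$\aaa^+$ outside the origin. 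Fact~\ref{fact:Benoist} and reductivity give $\cC_\mu(\rho(\semig))=\cC_\lambda(\rho(\semig))$, and the same compactness argument as in the group case then yields $c_1,c_1'>0$ with $(\mu_i-\mu_{i+1})(\rho(\gamma))\geq c_1\,\|\mu(\rho(\gamma))\|-c_1'$ for all $\gamma\in\semig$.

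Converting this bound into one in terms of $|\gamma|_F$ is where the semigroup argument diverges from the group case, since the displacing/quasi-isometric embedding machinery of~\cite{DGLM} is not directly available; this is the main obstacle I anticipate. The plan is to invoke property~(U) directly: given $\gamma\in\semig$, pick $s'_\gamma\in S'$ with $|\gamma s'_\gamma|_{F,\infty}\geq c_U\,|\gamma|_F-c'_U$, apply the eigenvalue gap to $\gamma s'_\gamma$, and use the elementary inequality $(\lambda_i-\lambda_{i+1})\leq\sqrt{2}\,\|\lambda\|\leq\sqrt{2}\,\|\mu\|$ (from~\eqref{eqn:lambda-mu-relation}) to deduce $\|\mu(\rho(\gamma s'_\gamma))\|\geq (cc_U/\sqrt{2})\,|\gamma|_F-C$ for some constant $C>0$. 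Since $S'$ is finite, $\|\mu(\rho(s'_\gamma))\|$ is uniformly bounded, so~\eqref{eqn:mu-subadd} gives $\|\mu(\rho(\gamma))\|\geq\|\mu(\rho(\gamma s'_\gamma))\|-\|\mu(\rho(s'_\gamma))\|\geq (cc_U/\sqrt{2})\,|\gamma|_F-C'$. Combined with the $\|\mu\|$-proportional bound, this yields the uniform $i$-gap in singular values, hence the $P_i$-Anosov property.

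For the case where $\semig$ satisfies (D) and~(U) but the Zariski closure of $\rho(\semig)$ need not be reductive, I would reduce to the previous case by semisimplification, as in Section~\ref{subsec:proof-teoA-nonreductive}. Let $H=L\ltimes R_u(H)$ be a Levi decomposition of the Zariski closure of $\rho(\semig)$, let $\pi^{ss}:H\to L$ be the projection, and set $\rho^{ss}:=\pi^{ss}\circ\rho$. Since eigenvalues are conjugation-invariant and continuous in the characteristic polynomial, $\lambda(\rho^{ss}(\gamma))=\lambda(\rho(\gamma))$, so $\rho^{ss}$ inherits a uniform $i$-gap in eigenvalues; its Zariski closure is $L$ (reductive) by Section~\ref{ss.semisimp}, so the reductive case applied to $\rho^{ss}$ gives that $\rho^{ss}$ is $P_i$-Anosov. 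Corollary~\ref{cor:Ano-open-semig} (which uses property~(D)) ensures that $P_i$-Anosov is an open condition in $\Hom(\semig,\GL(d,\RR))$, so since $h_n\rho(\cdot)h_n^{-1}\to\rho^{ss}$ the representations $h_n\rho(\cdot)h_n^{-1}$ are eventually $P_i$-Anosov; conjugation-invariance of the singular-value gap (up to an additive constant via~\eqref{eqn:mu-subadd}) then transfers the property back to~$\rho$.
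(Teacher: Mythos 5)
Your proposal is correct and follows essentially the same route as the paper: the direct implication by passing to powers and limits, and the converse by (i) using property~(U) together with the eigenvalue gap and subadditivity of~$\mu$ to get $\|\mu(\rho(\gamma))\|\gtrsim|\gamma|_F$ (the paper does exactly this, adapting \cite[Lem.\,2.0.1]{DGLM} with stable length in place of translation length), (ii) Benoist's Fact~\ref{fact:Benoist} in the reductive case to bound $(\mu_i-\mu_{i+1})$ below by a multiple of $\|\mu\|$, and (iii) semisimplification plus Corollary~\ref{cor:Ano-open-semig} under property~(D) for the non-reductive case. The only (immaterial) difference is the order in which you carry out steps (i) and (ii).
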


\begin{proof}
The fact that a uniform $i$-gap in singular values implies a uniform $i$-gap in eigenvalues follows from \eqref{eqn:lambda-mu-relation} and from the definition of stable length, as we already observed in the proof of Lemma~\ref{lem:semig-Ano-bound-map}.

For the converse, we assume that $\semig$ has property~(U).
If we are not in the case that the Zariski closure of $\rho(\semig)$ in $\GL(d,\RR)$ is reductive, then we may reduce to this case as soon as $\semig$ has property~(D), using Corollary~\ref{cor:Ano-open-semig} and arguing as in the second paragraph of the proof of Proposition~\ref{prop:eig-sing-gap-group} in Section~\ref{subsec:proof-eig-sing-gap-group}.
So we now assume that $\semig$ has property~(U) and the Zariski closure of $\rho(\semig)$ in $\GL(d,\RR)$ is reductive.
Suppose that $\rho$ has a uniform $i$-gap in eigenvalues: there exist $c,c'>0$ such that $(\lambda_i - \lambda_{i+1})(\rho(\gamma)) \geq c \, |\gamma|_{F,\infty} - c'$ for all $\gamma\in\semig$.
Arguing exactly as in the fourth and fifth paragraphs of the proof of Proposition~\ref{prop:eig-sing-gap-group}, but replacing the translation length $\ell_F$ by the stable length $|\cdot|_{F,\infty}$ everywhere in the fourth paragraph (as we already did in the proof of Proposition~\ref{prop:strong-U-gap-hyp}), we see that $\rho$ has a uniform $i$-gap in singular values.
\end{proof}

\subsection{Differences with the group case} \label{subsec:semig-vs-gp}

Unlike in the group case, if a semigroup homomorphism $\rho : \semig\to\GL(d,\RR)$ is $P_i$-Anosov, then
\begin{itemize}
  \item the Cayley graph of $\semig$ is not necessarily Gromov hyperbolic;
  \item the continuous $\rho$-equivariant map $\xi : \partial\semig\to\mathrm{Gr}_i(\RR^d)$ of Lemma~\ref{lem:semig-Ano-bound-map} does not need to be injective: it can actually be constant;
  \item $\rho$ can be far from injective: there can exist two elements $\gamma,\gamma'\in\semig$, generating infinite cyclic subsemigroups with trivial intersection, such that $\rho(\gamma)=\rho(\gamma')$;
  \item although the semigroup $\rho(\semig)$ is discrete in $\GL(d,\RR)$ (Remark~\ref{rem:Ano-discrete}), the group generated by $\rho(\semig)$ is not necessarily discrete in $\GL(d,\RR)$;
  \item $\rho$ is not necessarily $P_{d-i}$-Anosov;
  \item when $\rho$ is $P_{d-i}$-Anosov, the continuous $\rho$-equivariant maps $\xi : \partial\semig\to\mathrm{Gr}_i(\RR^d)$ and $\xi' : \partial\semig\to\mathrm{Gr}_{d-i}(\RR^d)$ given by Lemma~\ref{lem:semig-Ano-bound-map} are not necessarily transverse: there may exist $\eta\neq\eta'$ in $\partial\semig$ such that the $i$-plane $\xi(\eta)$ and the $(d-i)$-plane $\xi'(\eta')$ intersect nontrivially.
\end{itemize}

\begin{ex} \label{ex:free-semig}
Let $\semig$ be either a nonabelian free semigroup or an abelian free semigroup on two generators $a,b$.
Let $\rho : \semig\to\GL(d,\RR)$ be a semigroup homomorphism such that $\rho(a) = \mathrm{diag}(e^{t_1},\dots,e^{t_d})$ is a diagonal matrix with $t_1>t_2\geq\dots\geq t_d$ and $\rho(b)\rho(a)^{-1}\in\mathrm{O}(d)$ fixes the vector $e_1 = (1,0,\dots,0) \in \RR^d$.
One easily checks that any element of $\rho(\semig)$ preserves $\RR e_1$ (which is an eigenline for its eigenvalue of maximal modulus) and $\mathrm{span}(e_2,\dots,e_d)$, and that $\mu_1(\rho(\gamma\gamma')) = \mu_1(\rho(\gamma)) + \mu_1(\rho(\gamma'))$ and $(\mu_1-\mu_2)(\rho(\gamma\gamma')) \geq (\mu_1-\mu_2)(\rho(\gamma)) + (\mu_1-\mu_2)(\rho(\gamma'))$ for all $\gamma,\gamma'\in\semig$.
It follows that $\rho$ is $P_1$-Anosov and that the continuous equivariant maps $E^{cs} : \cQ\cG_F\to\mathrm{Gr}_1(\RR^d)=\PP(\RR^d)$ and $\xi : \partial\semig\to\PP(\RR^d)$ of Proposition~\ref{prop:Ano-sv-gap-semig} and Lemma~\ref{lem:semig-Ano-bound-map} are constant with image~$[e_1]$.
\begin{enumerate}
  \item If $\rho(a)=\rho(b)$, then $\rho$ is not injective.
  \item If
  $\rho(b)\rho(a)^{-1}\in\mathrm{O}(d)$ has infinite order, then the group generated by $\rho(\semig)$ is not discrete in $\GL(d,\RR)$.
  \item If
  $t_2 = \dots = t_d$, then $\rho$ is not $P_i$-Anosov for any $2\leq i\leq d-1$.
  \item If
  $t_1 > t_2 = \dots = t_{d-1} > t_d$ and $\rho(b)$ fixes $e_d = (0,\dots,0,1) \in \RR^d$, then $\rho$ is $P_{d-1}$-Anosov and the continuous equivariant map $\xi' : \partial\semig\to\mathrm{Gr}_{d-1}(\RR^d)$ given by Lemma~\ref{lem:semig-Ano-bound-map} is constant; its image contains the image of~$\xi$, and so $\xi$ and~$\xi'$ are not transverse.
\end{enumerate}
\end{ex}

Despite these differences, one can still try to look for analogies between the semigroup case and the group case, at least for certain classes of semigroups (see \eg Section~\ref{subsec:compl-simple-semig})).
With this in mind, we ask the following:

\begin{quest} \label{qu:charact-Ano-semig-xi}
Can $P_i$-Anosov semigroup homomorphisms be characterized in terms of a boundary map $\xi : \partial\semig\to\mathrm{Gr}_i(\RR^d)$ as in \cite[Th.\,1.3]{GGKW} and \cite{klp-survey}?
\end{quest}

\begin{quest} \label{qu:D-U-semig}
If a finitely generated semigroup $\semig$ admits an Anosov representation into some $\GL(d,\RR)$, must $\semig$ have properties (D) and~(U)?
\end{quest}

\subsection{The case of completely simple semigroups} \label{subsec:compl-simple-semig}

We now focus on an important class of semigroups, namely that of \emph{completely simple} semigroups (see \cite[\S\,3.3]{how95}).
By definition, these are the semigroups that have no two-sided ideals other than themselves, but possess minimal one-sided ideals.
For such semigroups we provide affirmative answers to Questions \ref{qu:charact-Ano-semig-xi} and~\ref{qu:D-U-semig} and prove Proposition~\ref{prop:main-Rees-semig}.

\subsubsection{Rees semigroups}

In order to prove Proposition~\ref{prop:main-Rees-semig}, we use the well-known fact (see \eg \cite[Th.\,3.3.1]{how95}) that any completely simple semigroup $\semig$ isomorphic to a \emph{Rees semigroup} of the form $M(\Gamma,I,J,P) = I\times\Gamma\times J$ with multiplication given by
$$ (i, g, j) (i', g', j') = (i, gp_{j,i'}g', j'), $$
where $\Gamma$ is a group, $I,J$ are two nonempty sets, and $P = (p_{j,i})_{j\in J,\,i\in I}$ is a matrix with values in~$\Gamma$.
We shall assume that $\semig$ is finitely generated, which means that $I$ and~$J$ are finite and $\Gamma$ is finitely generated.
For any $(i,j)\in I\times J$, the set $\Gamma_{i,j}:=\{i\}\times\Gamma\times\{j\}$ is a subsemigroup of $M(\Gamma,I,J,P)$ and there is a semigroup isomorphism
\begin{eqnarray*}
\varphi_{i,j} :\ \Gamma & \longmapsto & \hspace{0.5cm}\Gamma_{i,j}.\\
g & \longmapsto & (i,g p_{j,i}^{-1},j)
\end{eqnarray*}
The semigroup $M(\Gamma,I,J,P)$ is the disjoint union of the $\Gamma_{i,j}=\varphi_{i,j}(\Gamma)$ for $(i,j)\in I\times J$, and so one may expect that certain properties of the group $\Gamma$ will transfer to $M(\Gamma,I,J,P)$.

Proposition~\ref{prop:main-Rees-semig} will be a consequence of the following proposition, where we say that $\rho : \semig\to\GL(d,\RR)$ is \emph{$P_k$-divergent} if $(\mu_k - \mu_{k+1})(\rho(\gamma_n)) \to +\infty$ for any sequence $(\gamma_n)_{n\in\NN}$ of pairwise distinct elements of~$\semig$.

\begin{prop} \label{prop:Rees-semig}
Let $\semig=M(\Gamma,I,J,P)$ be a finitely generated Rees semigroup.
Let $F_{\Gamma}$ be a symmetric (\ie $F_{\Gamma} = F_{\Gamma}^{-1}$) finite generating subset of~$\Gamma$, so that $F := \bigsqcup_{(i,j)\in I\times J} \varphi_{i,j}(F_{\Gamma})$ is a finite generating subset of~$\semig$.
\begin{enumerate}[(a)]
  \item\label{item:D-Rees} If $\Gamma$ has property~(D), then so does~$\semig$.
  \item\label{item:U-Rees} If $\Gamma$ has property~(U), then so does~$\semig$.
  \item\label{item:Cay-Rees} The Cayley graph $\mathrm{Cay}(\semig,F)$ is the disjoint union of a finite oriented subgraph, consisting of the vertex~$e$ and finitely many edges starting from it, and of finitely many connected oriented subgraphs $\mathrm{Cay}_i(\semig,F)$, $i\in I$, such that
  \begin{itemize}
    \item for $i\neq i'$, there is no edge between a vertex of $\mathrm{Cay}_i(\semig,F)$ and a vertex of $\mathrm{Cay}_{i'}(\semig,F)$; in particular, $d_F(x,x') = d_F(x,e) + d_F(e,x')$ for all $x\in\mathrm{Cay}_i(\semig,F)$ and $x'\in\mathrm{Cay}_{i'}(\semig,F)$ when $i\neq i'$;
    \item for any $(i,j)\in I\times J$ the semigroup homomorphism $\varphi_{i,j} : \Gamma\to\semig$ induces a quasi-isometry $(\mathrm{Cay}(\Gamma,F_{\Gamma}), d_{F_{\Gamma}}) \to (\mathrm{Cay}_i(\semig,F), d_F)$.
  \end{itemize}
  \item\label{item:boundary-Rees} For any $(i,j)\in I\times J$, the map $(g_k)_{k\in\NN} \mapsto (\varphi_{i,j}(g_k))_{k\in\NN}$ from $F_{\Gamma}^{\NN}$ to $F^{\NN}$ defines an embedding $\cQ\cG_{F_{\Gamma}}\hookrightarrow\cQ\cG_F$, which induces an embedding $\partial\varphi_i : \partial\Gamma\hookrightarrow\partial\semig$, independent of~$j$.
  The set $\partial\semig$ is the disjoint union of its subsets
  \begin{equation} \label{eqn:bound-phi-i}
  \partial\varphi_i(\partial\Gamma) = \big\{ [(f_k)_{k\in\NN}] \in \partial\semig \,: f_0 \in \{i\}\times\Gamma\times J\big\}
  \end{equation}
  for $i\in I$.
  Moreover, $\mathcal{P}(\cQ\cG_F)$ (see \eqref{eqn:def-P}) is the set of quasigeodesic rays in $\mathrm{Cay}(\semig,F)$ of the form $(\gamma_n)_{n\in\NN^*} \in \semig^{\NN^*}$ with $\gamma_1 \in F$ and $\gamma_{n+1} \in \gamma_n F$ for all $n\in\NN^*$.

  \item\label{item:action-Rees} The action of $\semig$ on $\semig^{\NN^*}\!/_{\sim}$ (see Section~\ref{subsec:semig-QG-bound}) restricts to an action of $\semig$ on $\partial\semig$.
  For any $(i,i',j)\in I\times I\times J$ and any $g\in\Gamma$,
    \begin{equation} \label{eqn:equiv-partial-varphi}
  \varphi_{i,j}(g) \circ \partial\varphi_{i'} = \partial\varphi_i \circ gp_{j,i}^{-1}p_{j,i'}.
  \end{equation}

  \item\label{item:hyp-Rees} The group $\Gamma$ is word hyperbolic if and only if $\mathrm{Cay}(\semig,F)$ is Gromov hyperbolic (as in \cite[Th.\,4.1]{fk04}).
  In this case, $\partial\semig$ is naturally homeomorphic to the Gromov boundary of $\mathrm{Cay}(\semig,F)$, and for any infinite-order element $\gamma\in\semig$ we have $|\gamma|_{\infty}>\nolinebreak 0$ and the point $\eta_{\gamma}^+\in\partial\semig$ of Lemma~\ref{lem:eta-gamma+} is an attracting fixed point of $\gamma$ in $\partial\semig$; more precisely, if $\gamma = \varphi_{i,j}(g)$ where $(i,j,g)\in I\times J\times\Gamma$, then $\eta_{\gamma}^+ = \partial\varphi_i(\eta_g^+)$ where $\eta_g^+$ is the attracting fixed point of $g$ in $\partial\Gamma$.
  \item\label{item:hom-Rees} Given $(i,j)\in I\times J$, any semigroup homomorphism $\rho : \semig\to\GL(d,\RR)$ induces a group homomorphism $\rho\circ\varphi_{i,j} : \Gamma\to\GL(d,\RR)$.
  \item\label{item:Ano-Rees} Let $(i,j)\in I\times J$ and $1\leq k\leq d-1$.
  A semigroup homomorphism $\rho : \semig\to\GL(d,\RR)$ is $P_k$-divergent (\resp has a uniform $k$-gap in singular values) if and only if the same holds for $\rho\circ\varphi_{i,j}$.
  In particular (using Fact~\ref{fact:Ano-sg-gap}), $\rho$ is $P_k$-Anosov if and only if $\Gamma$ is word hyperbolic and $\rho\circ\varphi_{i,j}$ is $P_k$-Anosov.
\end{enumerate}
\end{prop}

In~\eqref{item:hyp-Rees}, recall that if the group $\Gamma$ is word hyperbolic, then $\partial\Gamma$ identifies with the Gromov boundary of~$\Gamma$, see Remark~\ref{rem:partial-semig-hyp-Cayley-graph}.\eqref{item:partial-semig-hyp-4}.
By \cite{fk04}, for finitely generated Rees semigroups~$\semig$, the Gromov hyperbolicity of the Cayley graph of~$\semig$ is equivalent to a notion of hyperbolicity for~$\semig$ introduced by Gilman \cite{gil02} (see also \cite{dg04}) in language-theoretic terms.

\subsubsection{Length estimates}

Before proving Proposition~\ref{prop:Rees-semig}, we introduce some notation and establish useful estimates.
Let $\semig=M(\Gamma,I,J,P)$ be a Rees semigroup.
For any $i\in I$, we set $R_i := \{i\} \times \Gamma \times J = \bigsqcup_{j\in J} \Gamma_{i,j}$; it is a subsemigroup of~$\semig$, and in fact a right-ideal.
Let $F_{\Gamma}$ be a symmetric generating subset of~$\Gamma$.
As in Proposition~\ref{prop:Rees-semig},
$$F := \bigsqcup_{(i,j)\in I\times J} \varphi_{i,j}(F_{\Gamma})$$
is a generating subset of~$\semig$, and
$$F'_{\Gamma} := \big\{p_{j,i}^{-1} \, p_{j,i'} \,:\, (i,i',j)\in I\times I\times J\big\} \, F_{\Gamma} \, \big\{p_{j,i}^{-1} \, p_{j,i'} \,:\, (i,i',j)\in I\times I\times J\big\}$$
is again a generating subset of~$\Gamma$, containing~$F_{\Gamma}$.
We assume that $\semig$ is finitely generated, which means that $I$ and~$J$ are finite and $\Gamma$ is finitely generated, and we take $F_{\Gamma}$ (hence $F$ and~$F'_{\Gamma}$) to be finite.
We set
$$r := 1 + 4 \max_{(i,j)\in I\times J} |p_{j,i}|_{F_{\Gamma}} \geq 1.$$

\begin{lema} \label{lem:length-Rees-semig}
For any $g,g'\in\Gamma$, any $i,i'\in I$, any $j,j'\in J$, and any $\gamma,\gamma'\in\semig$,
\begin{enumerate}
  \item\label{eqn:word-length-Rees-semig} $r^{-1} |g^{-1}g'|_{F_{\Gamma}} \,\leq\, |g^{-1}g'|_{F'_{\Gamma}} \,\leq\, d_F(\varphi_{i,j}(g),\varphi_{i,j}(g'))$
  
  $\hspace{3.98cm} \leq |\varphi_{i,j}(g^{-1}g')|_F \,\leq\, |g^{-1}g'|_{F_{\Gamma}} =\nolinebreak d_{F_{\Gamma}}(g,g')$,
  \item\label{eqn:stable-length-Rees-semig} $r^{-1} |g|_{F_{\Gamma},\infty} \,\leq\, |g|_{F'_{\Gamma},\infty} \,\leq\, |\varphi_{i,j}(g)|_{F,\infty} \,\leq\, |g|_{F_{\Gamma},\infty}$,
  \item\label{eqn:phi-i-j-j'Rees-semig} $d_F(\varphi_{i,j}(g),\varphi_{i,j'}(g))\leq 2$,
  \item\label{eqn:phi-i-i'-jRees-semig} $d_F(\varphi_{i,j}(g)\,\varphi_{i',j}(g'),\varphi_{i,j}(g p_{j,i}^{-1} p_{j,i'} g'))\leq r$,
  \item\label{eqn:gamma-gamma'-d-Rees-semig} $d_F(\gamma, \gamma \gamma') \geq r^{-1} \, |\gamma'|_F - 3$,  %
  \item\label{eqn:gamma-gamma'-length-Rees-semig} $|\gamma\gamma'|_F \geq r^{-1} |\gamma'|_F - 1 - |\gamma|_F$.
  \end{enumerate}
\end{lema}

\begin{proof}
\eqref{eqn:word-length-Rees-semig} The first inequality is immediate: any element of~$F'_{\Gamma}$ can be written as a product of at most $r$ elements of~$F_{\Gamma}$ (using the fact that $|p_{j,i}^{-1}|_{F_{\Gamma}}=|p_{j,i}|_{F_{\Gamma}}$).
The third and fourth inequalities follow from \eqref{eqn:bound-dF} and from the fact that $\varphi_{i,j}$ is a semigroup homomorphism.
The right-hand equality comes from the fact that $\Gamma$ is a group.
To check the second inequality, we note that for any $(i_k,j_k,g_k) \in I \times J \times \Gamma$,
\begin{eqnarray} \label{eqn:prod-phi-i-j}
& & \varphi_{i_0,j_0}(g_0)\cdots\varphi_{i_{n-1},j_{n-1}}(g_{n-1})\nonumber\\
& = & \varphi_{i_0,j_{n-1}}\big((g_0 p_{j_0,i_0}^{-1} p_{j_0,i_1}) \, (g_1 p_{j_1,i_1}^{-1} p_{j_1,i_2}) \cdots (g_{n-1} p_{j_{n-1},i_{n-1}}^{-1} p_{j_{n-1},i_0})\big).
\end{eqnarray}
Suppose that there exist elements $\gamma_0,\dots,\gamma_n\in\semig$ such that $\gamma_0 = \varphi_{i,j}(g)$ and $\gamma_n = \varphi_{i,j}(g')$ and $\gamma_{k+1} \in \gamma_k F$ or $\gamma_k \in \gamma_{k+1} F$ for all~$k$.
If we write $\gamma_k = \varphi_{i_k,j_k}(g_k)$ for all~$k$, where $(i_k,j_k,g_k) \in I \times J \times \Gamma$, then \eqref{eqn:prod-phi-i-j} shows that $g_k^{-1} g_{k+1} \in F'_{\Gamma}$ for all~$k$.
In particular, $|g^{-1}g'|_{F'_{\Gamma}} \leq d_F(\varphi_{i,j}(g),\varphi_{i,j}(g'))$.

\eqref{eqn:stable-length-Rees-semig} This immediately follows from~\eqref{eqn:word-length-Rees-semig} by replacing $g$ by $g^n$, dividing by~$n$, and taking a limit.

\eqref{eqn:phi-i-j-j'Rees-semig} One readily checks that for $j_0\in J$ and $g'\in F_{\Gamma}$, we have $\varphi_{i,j'}(gg') =\linebreak \varphi_{i,j_0}(g)\,\varphi_{i,j'}(g')$, hence $d_F(\varphi_{i,j_0}(g),\varphi_{i,j'}(g g'))=1$.
We apply this to $j_0=j$ and $j_0=j'$, and conclude using the triangle inequality.

\eqref{eqn:phi-i-i'-jRees-semig} As in \eqref{eqn:prod-phi-i-j}, we have $\varphi_{i,j}(g)\,\varphi_{i',j}(g') = \varphi_{i,j}(g p_{j,i}^{-1} p_{j,i'} g' p_{j,i'}^{-1} p_{j,i})$.
On the other hand, using the fact that $\varphi_{i,j}$ is a semigroup homomorphism, \eqref{eqn:bound-dF}, and~\eqref{eqn:word-length-Rees-semig}, we get
$$d_F\big(\varphi_{i,j}\big(g p_{j,i}^{-1} p_{j,i'} g' p_{j,i'}^{-1} p_{j,i}\big), \varphi_{i,j}\big(g p_{j,i}^{-1} p_{j,i'} g'\big)\big) \leq \big|\varphi_{i,j}(p_{j,i'}^{-1}p_{j,i})\big|_F \leq |p_{j,i'}^{-1}p_{j,i}|_{F_{\Gamma}} \leq r.$$

\eqref{eqn:gamma-gamma'-d-Rees-semig} Write $\gamma = \varphi_{i,j}(g)$ and $\gamma' = \varphi_{i',j'}(g')$ where $(i,j,g), (i',j',g') \in I\times J\times\Gamma$.
As in \eqref{eqn:prod-phi-i-j}, we have $\gamma \gamma' = \varphi_{i,j'}(g p_{j,i}^{-1} p_{j,i'} g' p_{j',i'}^{-1}p_{j',i})$.
Using \eqref{eqn:phi-i-j-j'Rees-semig}, \eqref{eqn:word-length-Rees-semig}, and the fact that $|g_1 g_2|_{F_{\Gamma}} \leq |g_1|_{F_{\Gamma}} + |g_2|_{F_{\Gamma}}$ for all $g_1,g_2\in\Gamma$, we obtain
\begin{eqnarray*}
d_F(\gamma, \gamma \gamma') & \geq & d_F\big(\varphi_{i,j}(g), \varphi_{i,j}(g p_{j,i}^{-1}p_{j,i'} g p_{j',i'}^{-1} p_{j',i})\big) - 2\\
& \geq & r^{-1} \, \big|p_{j,i}^{-1} p_{j,i'} g p_{j',i'}^{-1} p_{j',i}\big|_{F_{\Gamma}} - 2\\
& \geq & r^{-1} \, |g|_{F_{\Gamma}} - 3 \ \geq\ r^{-1} \, |\varphi_{i',j'}(g)|_F - 3.
\end{eqnarray*}

\eqref{eqn:gamma-gamma'-length-Rees-semig} Write $\gamma = \varphi_{i,j}(g)$ and $\gamma' = \varphi_{i',j'}(g')$ where $(i,j,g), (i',j',g') \in I\times J\times\Gamma$.
As in \eqref{eqn:prod-phi-i-j}, we have $\gamma \gamma' = \varphi_{i,j'}(g p_{j,i}^{-1} p_{j,i'} g' p_{j',i'}^{-1}p_{j',i})$.
By \eqref{eqn:word-length-Rees-semig}, we have $|\gamma \gamma'|_F \geq r^{-1} |g p_{j,i}^{-1} p_{j,i'} g' p_{j',i'}^{-1}p_{j',i}|_{F_{\Gamma}}$.
On the other hand, using the fact that $|g_1|_{F_{\Gamma}} = |g_1 g_2 g_2^{-1}|_{F_{\Gamma}}\linebreak \leq |g_1 g_2|_{F_{\Gamma}} + |g_2^{-1}|_{F_{\Gamma}} \leq |g_1 g_2|_{F_{\Gamma}} + |g_2|_{F_{\Gamma}}$ for all $g_1,g_2\in\Gamma$ (since $F_{\Gamma}$ is symmetric), we have $|g p_{j,i}^{-1} p_{j,i'} g' p_{j',i'}^{-1}p_{j',i}|_{F_{\Gamma}} \geq |g'|_{F_{\Gamma}} - |g|_{F_{\Gamma}} - r$.
Using \eqref{eqn:word-length-Rees-semig} again, we obtain
$$|\gamma \gamma'|_F \geq r^{-1} |g'|_{F_{\Gamma}} - r^{-1} |g|_{F_{\Gamma}} - 1 \geq r^{-1} |\gamma'|_F - |\gamma|_F - 1. \hfill\qedhere$$
\end{proof}

\subsubsection{Proof of Proposition~\ref{prop:Rees-semig}}

\eqref{item:D-Rees} Suppose $\Gamma$ has property~(D) with constants $\kappa,\kappa',N$ for the generating subset~$F_{\Gamma}$.
For any $\gamma\in\semig$ we can write $\gamma=\varphi_{i,j}(g)$ for some $(i,j,g)\in I\times J\times\Gamma$.
By property~(D) for~$\Gamma$, we can find $(g_k)_{k\in\NN} \in \cQ\cG_{F_{\Gamma}}^{\kappa,\kappa'} \subset F_{\Gamma}^{\NN}$ and $n\in\NN^*$ with $d_{F_{\Gamma}}(g, g_0 \cdots g_{n-1})\leq N$.
Let $f_k := \varphi_{i,j}(g_k) \in\nolinebreak F$.
By Lemma~\ref{lem:length-Rees-semig}.\eqref{eqn:word-length-Rees-semig} and the fact that $\varphi_{i,j}$ is a semigroup homomorphism, we have $d_F(\gamma,f_0\cdots f_{n-1}) \leq N$ and $(f_k)_{k\in\NN} \in \cQ\cG_F^{r\kappa,r^{-1}\kappa'}$.
Thus $\semig$ has property~(D) with constants $r\kappa, r^{-1}\kappa', N$ for the generating subset~$F$.

\eqref{item:U-Rees} Suppose $\Gamma$ has property~(U): there exist a finite subset $S'_{\Gamma}$ of~$\Gamma$ and constants $c_{\Gamma},c'_{\Gamma}>0$ such that for any $g\in\Gamma$, we can find $s_{\Gamma}\in S'_{\Gamma}$ with $|s_{\Gamma} g|_{F_{\Gamma},\infty} \geq\nolinebreak c_{\Gamma} |g|_{F_{\Gamma}} -\nolinebreak c'_{\Gamma}$.
For any $\gamma\in\semig$ we can write $\gamma=\varphi_{i,j}(g)$ for some $(i,j,g)\in I\times J\times\Gamma$.
Consider $s_{\Gamma}\in S'_{\Gamma}$ such that $|s_{\Gamma} g|_{F_{\Gamma},\infty} \geq\nolinebreak c_{\Gamma} |g|_{F_{\Gamma}} -\nolinebreak c'_{\Gamma}$.
By Lemma~\ref{lem:length-Rees-semig}.\eqref{eqn:word-length-Rees-semig}--\eqref{eqn:stable-length-Rees-semig}, we have
$$|\varphi_{i,j}(s_{\Gamma}) \gamma|_{F,\infty} = |\varphi_{i,j}(s_{\Gamma} g)|_{F,\infty} \geq r^{-1} c_{\Gamma} |g|_{F_{\Gamma}} - r^{-1} c'_{\Gamma} \geq r^{-1} c_{\Gamma} |\gamma|_F - r^{-1} c'_{\Gamma}.$$
Thus $\semig$ has property~(U) with $S' := \bigsqcup_{(i,j)\in I\times J} \varphi_{i,j}(S'_{\Gamma})$ and constants $r^{-1}c_{\Gamma}, r^{-1}c'_{\Gamma}$.

\eqref{item:Cay-Rees} For any $i\in I$, let $\mathrm{Cay}_i(\semig,F)$ be the \emph{Sch\"utzenberger graph} of $R_i = \{i\} \times \Gamma \times J$, namely the oriented subgraph of $\mathrm{Cay}(\semig,F)$ with vertices $R_i$ and with a directed edge from $g$ to $g f$ for any $g\in R_i$ and $f\in F$.
The right-ideals $R_i$ and $R_{i'}$ of~$\semig$ are disjoint for $i\neq i'$ and so, inside $\mathrm{Cay}(\semig,F)$, there does not exist any edge between a vertex of $\mathrm{Cay}_i(\semig,F)$ and a vertex of $\mathrm{Cay}_{i'}(\semig,F)$ for $i\neq i'$.
In particular, the Cayley graph $\mathrm{Cay}(\semig,F)$ is the disjoint union of its oriented subgraphs $\mathrm{Cay}_i(\semig,F)$, for $i\in I$, and of a finite oriented subgraph, consisting of the vertex~$e$ and finitely many edges joining $e$ to the subgraphs $\mathrm{Cay}_i(\semig,F)$.
By Lemma~\ref{lem:length-Rees-semig}.\eqref{eqn:word-length-Rees-semig}, for any $(i,j)\in I\times J$ we have $d_F(\varphi_{i,j}(g),\varphi_{i,j}(g')) \geq r^{-1} \, d_{F_{\Gamma}}(g,g')$ for all $g,g'\in\Gamma$, hence the semigroup homomorphism $\varphi_{i,j} : \Gamma\to R_i$ induces a quasi-isometric embedding $(\mathrm{Cay}(\Gamma,F_{\Gamma}), d_{F_{\Gamma}}) \to (\mathrm{Cay}_i(\semig,F), d_F)$, which is in fact a quasi-isometry since any point of $R_i = \bigsqcup_{j'\in J} \varphi_{i,j'}(\Gamma)$ lies at uniformly bounded distance from a point of $\varphi_{i,j}(\Gamma)$ by Lemma~\ref{lem:length-Rees-semig}.\eqref{eqn:phi-i-j-j'Rees-semig}.

\eqref{item:boundary-Rees} Consider $(i,j)\in I\times J$.
By Lemma~\ref{lem:length-Rees-semig}.\eqref{eqn:word-length-Rees-semig}, for any $\kappa\geq 1$ and $\kappa'\geq 0$, and any $(g_k)_{k\in\NN} \in \cQ\cG_{F_{\Gamma}}^{\kappa,\kappa'}$, we have $(\varphi_{i,j}(g_k))_{k\in\NN} \in \cQ\cG_F^{r\kappa,r^{-1}\kappa'}$; moreover, $r^{-1} d_{F_{\Gamma}}(g,g') \leq d_F(\varphi_{i,j}(g),\varphi_{i,j}(g')) \leq d_{F_{\Gamma}}(g,g')$ for all $g,g'\in\Gamma$.
Therefore $(g_k)_{k\in\NN} \mapsto (\varphi_{i,j}(g_k))_{k\in\NN}$ defines an embedding $\cQ\cG_{F_{\Gamma}} \hookrightarrow \cQ\cQ_F$, which factors into an embedding $\partial\varphi_i : \partial\Gamma \hookrightarrow \partial\semig$.
By Lemma~\ref{lem:length-Rees-semig}.\eqref{eqn:phi-i-j-j'Rees-semig}, this embedding $\partial\varphi_i$ does not depend on~$j$.
By~\eqref{item:Cay-Rees}, we have $d_F(\varphi_{i,j}(g),\varphi_{i',j'}(g')) \geq d_F(\varphi_{i,j}(g),e)$ for all $(i,j,g), (i',j',g') \in I\times J\times\Gamma$ with $i\neq i'$, and so the images of $\partial\varphi_i$ and $\partial\varphi_{i'}$ are disjoint for $i\neq i'$.

Let us check the equality \eqref{eqn:bound-phi-i}.
By \eqref{eqn:prod-phi-i-j}, for any $(f_k)_{k\in\NN}\in\cQ\cG_F$, if we write $f_k = \varphi_{i_k,j_k}(g_k)$ with $(i_k,j_k,g_k) \in I\times J\times F_{\Gamma}$ and set $g'_k := g_k p_{j_k,i_k}^{-1} p_{j_k,i_{k+1}} \in F'_{\Gamma}$, then for any $k,\ell\in\NN$ we have

\begin{equation} \label{eqn:f-g-prime}
f_k \cdots f_{k+\ell} = \varphi_{i_k,j_{k+\ell}}\big(g'_k \cdots g'_{k+\ell} \, p_{j_{k+\ell},i_{k+\ell+1}}^{-1} \, p_{j_{k+\ell},i_k}\big).
\end{equation}
In particular, using Lemma~\ref{lem:length-Rees-semig}.\eqref{eqn:word-length-Rees-semig}, we see that for any $k,\ell\in\NN$,
$$ |g'_k \cdots g'_{k+\ell}|_{F'_{\Gamma}} \geq |g'_k \cdots g'_{k+\ell} \, p_{j_{k+\ell},i_{k+\ell+1}}^{-1} \, p_{j_{k+\ell},i_k}|_{F'_{\Gamma}} - 1 \geq |f_k \cdots f_{k+\ell}|_F - 1,$$
and so $(g'_k)_{k\in\NN} \in \cQ\cG_{F'_{\Gamma}}$.
On the other hand, taking $k=0$ in \eqref{eqn:f-g-prime} and using Lemma~\ref{lem:length-Rees-semig}.\eqref{eqn:word-length-Rees-semig}, we have $d_F(f_0 \cdots f_{n-1}, \varphi_{i_0,j_{n-1}}(g'_0\cdots g'_{n-1})) \leq r$ for all $n\in\NN^*$.
By Lemma~\ref{lem:length-Rees-semig}.\eqref{eqn:phi-i-j-j'Rees-semig} we have $d_F(\varphi_{i_0,j_{n-1}}(g'_0\cdots g'_{n-1}),\varphi_{i_0,j_0}(g'_0\cdots g'_{n-1})) \leq 2$, and so by the triangle inequality we obtain $d_F\big(f_0 \cdots f_{n-1}, \varphi_{i_0,j_0}(g'_0\cdots g'_{n-1})\big) \leq r+2$ for all $n\in\NN^*$.
This shows that $[(f_k)_{k\in\NN}] = [(\varphi_{i_0,j_0}(g'_k))_{k\in\NN}] \in \partial\varphi_{i_0}(\partial\Gamma)$.
Thus the right-hand side of \eqref{eqn:bound-phi-i} is included in the left-hand side.
The converse is clear.

Consider $(f_k)_{k\in\NN} \in F^{\NN}$ and let $(\gamma_n)_{n\in\NN^*} := \mathcal{P}((f_k)_{k\in\NN})$.
By Lemma~\ref{lem:length-Rees-semig}.\eqref{eqn:gamma-gamma'-d-Rees-semig}, for any $k,\ell\in\NN^*$,
$$d_F(\gamma_k, \gamma_{k+\ell}) = d_F(\gamma_k, \gamma_k f_k \cdots f_{k+\ell-1}) \geq r^{-1} |f_k \cdots f_{k+\ell+1}|_F - 3.$$
Therefore, if $(f_k)_{k\in\NN} \in \cQ\cG_F^{\kappa,\kappa'}$ for some $\kappa\geq 1$ and $\kappa'\geq 0$, then $\mathcal{P}((f_k)_{k\in\NN})$ is an $(r \kappa, r^{-1} \kappa + 3)$-quasigeodesic ray in $\mathrm{Cay}(\semig,F)$.
Given Lemma~\ref{lem:QG-semig-Cayley-graph}.\eqref{item:QG-semig-1}, this implies (as in Corollary~\ref{cor:QG-F-semigroup}) that $\mathcal{P}(\cQ\cG_F)$ is the set of quasigeodesic rays in $\mathrm{Cay}(\semig,F)$ of the form $(\gamma_n)_{n\in\NN^*} \in \semig^{\NN^*}$ with $\gamma_1 \in F$ and $\gamma_{n+1} \in \gamma_n F$ for all $n\in\NN^*$.

\eqref{item:action-Rees} 
By Lemma~\ref{lem:length-Rees-semig}.\eqref{eqn:gamma-gamma'-length-Rees-semig}, there exist $c,c'>0$ such that $|\gamma' \gamma''|_F \geq c |\gamma''|_F - c' - |\gamma'|_F$ for all $\gamma',\gamma''\in\semig$.
Arguing exactly as in the proof of Lemma~\ref{lem:semig-bound-equiv}, we obtain that the action of $\semig$ on $\semig^{\NN^*}\!/_{\sim}$ (as in Section~\ref{subsec:semig-QG-bound}) restricts to an action of $\semig$ on $\partial\semig$.
The equality \eqref{eqn:equiv-partial-varphi} follows immediately from Lemma~\ref{lem:length-Rees-semig}.\eqref{eqn:phi-i-i'-jRees-semig}.

\eqref{item:hyp-Rees} Fix $j\in J$.
By~\eqref{item:Cay-Rees}, the maps $\varphi_{i,j}$, for $i\in I$, define a quasi-isometry between the disjoint union of finitely many copies (indexed by~$I$) of $\mathrm{Cay}(\Gamma,F_{\Gamma})$, and $\mathrm{Cay}(\semig,F)$.
Therefore, $\mathrm{Cay}(\semig,F)$ is Gromov hyperbolic if and only if $\mathrm{Cay}(\Gamma,F_{\Gamma})$ is, which is equivalent to $\Gamma$ being word hyperbolic.
In this case, the maps $\varphi_{i,j}$ induce a homeomorphism between the disjoint union of finitely many copies (indexed by~$I$) of the Gromov boundary of $\mathrm{Cay}(\Gamma,F_{\Gamma})$, and the Gromov boundary of $\mathrm{Cay}(\semig,F)$.
By~\eqref{item:boundary-Rees}, the maps $\varphi_{i,j}$ also induce an identification between the disjoint union of finitely many copies (indexed by~$I$) of $\partial\Gamma$, and $\partial\semig$.
Since $\partial\semig$ identifies with some subset of the Gromov boundary of $\mathrm{Cay}(\semig,F)$ (see \eqref{item:boundary-Rees} and Remark~\ref{rem:partial-semig-hyp-Cayley-graph}.\eqref{item:partial-semig-hyp-3}) and $\partial\Gamma$ identifies with the full Gromov boundary of $\mathrm{Cay}(\Gamma,F_{\Gamma})$ (see Remark~\ref{rem:partial-semig-hyp-Cayley-graph}.\eqref{item:partial-semig-hyp-4}), we deduce that $\partial\semig$ identifies with the full Gromov boundary of $\mathrm{Cay}(\semig,F)$.

Consider an infinite-order element $\gamma = \varphi_{i,j}(g) \in \semig$, where $(i,j,g)\in I\times J\times\Gamma$.
Since $\varphi_{i,j}$ is a semigroup homomorphism, $g$ is an infinite-order element of~$\Gamma$; since $\Gamma$ is word hyperbolic, $g$ admits a unique attracting (\resp repelling) fixed point $\eta_g^+$ (\resp $\eta_g^-$) in $\partial\Gamma$ and $|g|_{F_{\Gamma},\infty} > 0$ (see \cite[Ch.\,9, \S\,3]{cdp90} and Remark~\ref{rem:partial-semig-hyp-Cayley-graph}.\eqref{item:partial-semig-hyp-4}).
The point $\eta_g^+$ (\resp $\eta_g^-$) is equal to the point $\eta_g^+$ (\resp $\eta_{g^{-1}}^+$) of Lemma~\ref{lem:eta-gamma+} .
By Lemma~\ref{lem:length-Rees-semig}.\eqref{eqn:stable-length-Rees-semig} we have $|\gamma|_{F,\infty} > 0$.
For any $\eta \in \partial\Gamma \smallsetminus \{\eta_g^-\}$ we have $g^n\cdot\eta \to \eta_g^+$ as $n\to +\infty$, and so, by \eqref{eqn:equiv-partial-varphi}, for any $i'\in I$ and any $\eta \in \partial\Gamma \smallsetminus \{ p_{j,i'}^{-1}p_{j,i}\cdot\eta_g^-\}$ we have $\varphi_{i,j}(g^n) \cdot \partial\varphi_{i'}(\eta) \to \partial\varphi_i(\eta_g^+)$.
This shows that $\partial\varphi_i(\eta_g^+)$ is an attracting fixed point of $\varphi_{i,j}(g)$ in $\partial\semig$.
It follows from the definitions of $\partial\varphi_i$ and $\eta_{\gamma}^+,\eta_g^+$ that $\eta_{\gamma}^+ = \partial\varphi_i(\eta_g^+)$.

\eqref{item:hom-Rees} Since $\Gamma$ and $\GL(d,\RR)$ are both groups, the semigroup homomorphism $\rho\circ\varphi_{i,j} : \Gamma\to\GL(d,\RR)$ is a group homomorphism: indeed, $(\rho\circ\varphi_{i,j})(g)(\rho\circ\varphi_{i,j})(g^{-1})=\mathrm{id}$ for all $g\in\Gamma$, hence $(\rho\circ\varphi_{i,j})(g^{-1})=(\rho\circ\varphi_{i,j})(g)^{-1}$.

\eqref{item:Ano-Rees} The property for $\rho$ (\resp $\rho\circ\varphi_{i,j}$) to be $P_k$-divergent is equivalent to the existence of a nondecreasing function $\vartheta : \NN\to (0,+\infty)$, going to infinity, such that $(\mu_k-\mu_{k+1})(\rho(\gamma))\geq\vartheta(|\gamma|_F)$ for all $\gamma\in\semig$ (\resp $(\mu_k-\mu_{k+1})(\rho\circ\varphi_{i,j}(g))\geq\vartheta(|g|_{F_{\Gamma}})$ for all $g\in\Gamma$); the property for $\rho$ (\resp $\rho\circ\varphi_{i,j}$) to have a uniform $k$-gap in singular values is equivalent to the existence of an \emph{affine} increasing function $\vartheta : \NN\to (0,+\infty)$ satisfying the same inequalities.
Since any $\gamma\in\semig$ can be written as $\gamma=\varphi_{i_0,j_0}(g)$ for some $(i_0,j_0)\in I\times J$ and $g\in\Gamma$, and in view of Lemma~\ref{lem:length-Rees-semig}.\eqref{eqn:word-length-Rees-semig}, we see that in order to prove that $\rho$ is $P_k$-divergent if and only if $\rho\circ\varphi_{i,j}$ is, and that $\rho$ has a uniform $k$-gap in singular values if and only if $\rho\circ\varphi_{i,j}$ does, it is sufficient to prove that there exists $m>0$ such that for any $(i_0,j_0)\in I\times J$ and $g\in\Gamma$,
\begin{equation} \label{eqn:mu-rho-phi-i-j}
\| \mu(\rho(\varphi_{i_0,j_0}(g))) - \mu(\rho(\varphi_{i,j}(g))) \| \leq m.
\end{equation}
Let us prove this.
By the triangle inequality and Lemma~\ref{lem:length-Rees-semig}.\eqref{eqn:phi-i-j-j'Rees-semig}--\eqref{eqn:phi-i-i'-jRees-semig},
\begin{eqnarray*}
& & d_F\big(\varphi_{i_0,j_0}(g), \varphi_{i_0,j}(p_{j,i}^{-1}p_{j,i_0})\,\varphi_{i,j}(g)\big)\\
& \leq & d_F(\varphi_{i_0,j_0}(g),\varphi_{i_0,j}(g)) + d_F\big(\varphi_{i_0,j}(g),\varphi_{i_0,j}(p_{j,i}^{-1}p_{j,i_0})\,\varphi_{i,j}(g)\big) \,\leq\, 2 + r,
\end{eqnarray*}
and so, by Remark~\ref{rem:mu-dist-Cayley-graph},
$$\| \mu(\rho(\varphi_{i_0,j_0}(g))) - \mu(\rho(\varphi_{i_0,j}(p_{j,i}^{-1}p_{j,i_0})\,\varphi_{i,j}(g))) \| \leq (2+r)\,\max_{f\in F} \|\mu(\rho(f))\|$$
is uniformly bounded.
On the other hand, by \eqref{eqn:mu-subadd},
$$\| \mu(\rho(\varphi_{i_0,j}(p_{j,i}^{-1}p_{j,i_0})\,\varphi_{i,j}(g))) - \mu(\rho(\varphi_{i,j}(g))) \| \leq \| \mu(\rho(\varphi_{i_0,j}(p_{j,i}^{-1}p_{j,i_0}))) \|$$
is uniformly bounded since $I$ and~$J$ are finite.
Applying the triangle inequality, we obtain the existence of a uniform constant $m>0$ such that \eqref{eqn:mu-rho-phi-i-j} holds.

This completes the proof of Proposition~\ref{prop:Rees-semig}.

\subsubsection{Proof of Proposition~\ref{prop:main-Rees-semig}}

The semigroup $\semig$ is isomorphic to a Rees semigroup $M(\Gamma,I,J,P)$ where $I$ and~$J$ are finite and $\Gamma$ is a finitely generated group.
Let $F_{\Gamma}$ be a symmetric finite generating subset of~$\Gamma$ and let $F:=\bigsqcup_{(i,j)\in I\times J}\,\varphi_{i,j}(F_{\Gamma})$.

If $\semig$ admits an Anosov representation, then $\Gamma$ is word hyperbolic by Proposition~\ref{prop:Rees-semig}.\eqref{item:Ano-Rees}, and so Proposition~\ref{prop:Rees-semig}.\eqref{item:hyp-Rees} ensures that the Cayley graph $\mathrm{Cay}(\semig,F)$ is Gromov hyperbolic, that the Gromov boundary of $\mathrm{Cay}(\semig,F)$ naturally identifies with $\partial\semig$, and that for any infinite-order element $\gamma\in\semig$ we have $|\gamma|_{\infty}>0$ and the point $\eta_{\gamma}^+\in\partial\semig$ of Lemmas \ref{lem:eta-gamma+} and~\ref{lem:semig-Ano-bound-map} is an attracting fixed point of $\gamma$ in $\partial\semig$.

We first check \eqref{item:Pk-Ano-Rees} $\Leftrightarrow$ \eqref{item:Pd-k-Ano-Rees}.
By Proposition~\ref{prop:Rees-semig}.\eqref{item:hom-Rees}--\eqref{item:Ano-Rees}, the semigroup homomorphism $\rho$ is $P_k$-Anosov (\resp $P_{d-k}$-Anosov) if and only if the group homomorphism $\rho\circ\varphi_{i,j}$ has a uniform $k$-gap (\resp $(d-k)$-gap) in singular values for all $(i,j)\in I\times J$.
On the other hand, for a group homomorphism, having a uniform $k$-gap is equivalent to having a uniform $(d-k)$-gap (Remark~\ref{rem:gaps}.\eqref{item:i-(d-i)-gap}).
Thus $\rho$ is $P_k$-Anosov if and only if it is $P_{d-k}$-Anosov.

We now check \eqref{item:Pk-Ano-Rees} $\Rightarrow$ \eqref{item:bound-maps-Rees}.
Suppose $\rho$ is $P_k$-Anosov; in particular, it is $P_k$-divergent.
Since $\rho$ is also $P_{d-k}$-Anosov by~\eqref{item:Pd-k-Ano-Rees}, Lemma~\ref{lem:semig-Ano-bound-map} yields two $\rho$-equivariant, continuous, dynamics-preserving boundary maps $\xi : \partial\semig\to\mathrm{Gr}_k(\RR^d)$ and $\xi' : \partial\semig\to\mathrm{Gr}_{d-k}(\RR^d)$.
Let us check that $\xi$ and~$\xi'$ are compatible and transverse.
For any $(i,j)\in I\times\nolinebreak J$, the embedding $\partial\varphi_i : \partial\Gamma \hookrightarrow \partial\semig$ of Proposition~\ref{prop:Rees-semig}.\eqref{item:boundary-Rees} is $\varphi_{i,j}$-equivariant by Proposition~\ref{prop:Rees-semig}.\eqref{item:action-Rees}.
The maps $\xi\circ\partial\varphi_i : \partial\Gamma\to\mathrm{Gr}_k(\RR^d)$ and $\xi'\circ\partial\varphi_i : \partial\Gamma\to\mathrm{Gr}_{d-k}(\RR^d)$ are continuous and $(\rho\circ\varphi_{i,j})$-equivariant.
They are dynamics-preserving because $\xi$ and~$\xi'$ are and because $\partial\varphi_i(\eta_g^+) = \eta_{\gamma}^+$ where $\eta_g^+$ is the attracting fixed point of $g$ in $\partial\Gamma$ (Proposition~\ref{prop:Rees-semig}.\eqref{item:hyp-Rees}).
By Proposition~\ref{prop:Rees-semig}.\eqref{item:hom-Rees}--\eqref{item:Ano-Rees}, the group homomorphism $\rho\circ\varphi_{i,j} : \Gamma\to\GL(d,\RR)$ is $P_k$-Anosov; the $(\rho\circ\varphi_{i,j})$-equivariant, continuous, dynamics-preserving maps $\xi\circ\partial\varphi_i$ and $\xi'\circ\partial\varphi_i$ must be its boundary maps, hence they are compatible and transverse (see \eg \cite[\S\,2.5.2]{GGKW}).
By Proposition~\ref{prop:Rees-semig}.\eqref{item:boundary-Rees}, any point of $\partial\semig$ can be written as $\eta = \partial\varphi_i(\zeta)$ for some $i\in I$ and $\zeta\in\partial\Gamma$; since $\xi\circ\partial\varphi_i : \partial\Gamma\to\mathrm{Gr}_k(\RR^d)$ and $\xi'\circ\partial\varphi_i : \partial\Gamma\to\mathrm{Gr}_{d-k}(\RR^d)$ are compatible, the $k$-plane $\xi(\eta)$ and the $(d-k)$-plane $\xi'(\eta)$ intersect in a plane of dimension $\min(i,d-i)$.
This shows that $\xi$ and~$\xi'$ are compatible.
Consider $\eta,\eta'\in\partial\semig$ such that $\gamma\cdot\eta \neq \gamma\cdot\eta'$ for some $\gamma\in\semig$.
By \eqref{eqn:equiv-partial-varphi}, there exist $i\in I$ and $\zeta\neq\zeta'$ in $\partial\Gamma$ such that $\gamma\cdot\eta = \partial\varphi_i(\zeta)$ and $\gamma\cdot\eta' = \partial\varphi_i(\zeta')$.
By transversality of $\xi\circ\partial\varphi_i$ and $\xi'\circ\partial\varphi_i$, the $k$-plane $\xi(\gamma\cdot\eta)$ and the $(d-k)$-plane $\xi'(\gamma\cdot\eta')$ intersect trivially; but $\xi(\gamma\cdot\eta) = \rho(\gamma)\cdot\xi(\eta)$ and $\xi'(\gamma\cdot\eta') = \rho(\gamma)\cdot\xi'(\eta')$ where $\rho(\gamma)\in\GL(d,\RR)$, hence $\xi(\eta)$ and $\xi'(\eta')$ intersect trivially.
This shows that $\xi$ and~$\xi'$ are transverse.

Conversely, let us check \eqref{item:bound-maps-Rees} $\Rightarrow$ \eqref{item:Pk-Ano-Rees}.
Suppose that there exist $\rho$-equivariant, continuous, dynamics-preserving, compatible, transverse boundary maps $\xi : \partial\semig\to\nolinebreak\mathrm{Gr}_k(\RR^d)$ and $\xi' : \partial\semig\to\mathrm{Gr}_{d-k}(\RR^d)$ and that $\rho$ is $P_k$-divergent.
Consider $(i,j)\in I\times J$.
As in the proof of \eqref{item:Pk-Ano-Rees} $\Rightarrow$ \eqref{item:bound-maps-Rees} above, using Proposition~\ref{prop:Rees-semig}.\eqref{item:boundary-Rees}--\eqref{item:action-Rees}--\eqref{item:hyp-Rees}, we see that the maps $\xi\circ\partial\varphi_i : \partial\Gamma\to\mathrm{Gr}_k(\RR^d)$ and $\xi'\circ\partial\varphi_i : \partial\Gamma\to\mathrm{Gr}_{d-k}(\RR^d)$ are $(\rho\circ\varphi_{i,j})$-equivariant, continuous, and dynamics-preserving.
They are compatible because $\xi$ and~$\xi'$ are.
They are transverse because $\xi$ and~$\xi'$ are and because if $\zeta,\zeta'\in\partial\Gamma$ are distinct, then $\varphi_{i,j}(g)\cdot\partial\varphi_i(\zeta) \neq \varphi_{i,j}(g)\cdot\partial\varphi_i(\zeta')$ for all $g\in\Gamma$, by $\varphi_{i,j}$-equivariance and injectivity of $\partial\varphi_i$.
Moreover, $\rho\circ\varphi_{i,j}$ is $P_k$-divergent by Proposition~\ref{prop:Rees-semig}.\eqref{item:Ano-Rees}.
Therefore $\rho\circ\varphi_{i,j}$ is $P_k$-Anosov by \cite{GGKW} or \cite{KLP}, and so $\rho$ is $P_k$-Anosov by Proposition~\ref{prop:Rees-semig}.\eqref{item:Ano-Rees}.

The equivalence \eqref{item:Pk-Ano-Rees} $\Leftrightarrow$ \eqref{item:eig-gap-Rees} follows from Proposition~\ref{prop:eig-gap-semig}.
Indeed, since $\Gamma$ is word hyperbolic it has property~(U) (see Example~\ref{ex:prop-U-groups}), and so $\semig$ has property~(U) by Proposition~\ref{prop:Rees-semig}.\eqref{item:U-Rees}.

The fact that $P_k$-Anosov representations form an open subset of $\Hom(\semig,\GL(d,\RR))$ follows from Corollary~\ref{cor:Ano-open-semig}.
Indeed, since $\Gamma$ is word hyperbolic it has property~(D) (see \eg \cite[Lem.\,5.8]{BPS}), and so $\semig$ has property~(D) by Proposition~\ref{prop:Rees-semig}.\eqref{item:D-Rees}.

This completes the proof of Proposition~\ref{prop:main-Rees-semig}.

\section{General reductive Lie groups} \label{sec:general-reductive}

In this section, we fix a noncompact real reductive Lie group $G$ which is a finite union of connected components (for the real topology) of $\mathbf{G}(\RR)$ for some algebraic group~$\mathbf{G}$ defined over~$\RR$.

\subsection{Lie-theoretic reminders}

Recall that $G$ admits a \emph{Cartan decomposition} $G=K\exp(\mathfrak{a}^+)K$ where $K$ is a maximal compact subgroup of~$G$ and $\mathfrak{a}^+$ a closed Weyl chamber in a Cartan subspace $\mathfrak{a}$ of the Lie algebra $\mathfrak{g}$ of~$G$.
Any $g\in G$ can be written $g=k\exp(\mu(g))k'$ for some $k,k'\in K$ and a unique $\mu(g)\in\mathfrak{a}^+$; this defines a map $\mu : G\to\mathfrak{a}^+$ (\emph{Cartan projection}) which is continuous, proper, and surjective.

Any element $g\in G$ can be written uniquely as the commuting product $g = g_h g_e g_u$ of a hyperbolic, an elliptic, and a unipotent element (\emph{Jordan decomposition}).
By definition, the conjugacy class of $g_h$ intersects $\exp(\aaa^+)$ in a unique element $\exp(\lambda(g))$; this defines a map $\lambda : G\to\aaa^+$ called the \emph{Jordan projection} or \emph{Lyapunov projection}.

Let $\Sigma\subset\aaa^*$ be the set of restricted roots of $\aaa$ in~$\mathfrak{g}$, let $\Delta\subset\Sigma$ be the set of simple roots with respect to the choice of~$\aaa^+$, so that
$$\aaa^+ = \{Y \in \aaa \,:\, \langle{\alpha,Y}\rangle \geq 0\;\; \forall \alpha \in \Delta\},$$
and let $\Sigma^+=\Sigma\cap\R^{>0}\text{-span}(\Delta)$ be the set of positive roots.
For any $\alpha\in\Sigma\cup\{0\}$, we set $\mathfrak{g}_{\alpha}:=\{Z\in\mathfrak{g} \,:\, \mathrm{ad}(Y)Z=\alpha(Y)\,Z \;\; \forall Y\in\aaa\}$.
Given a subset $\theta \subset \Delta$, we define $P_\theta$ (\resp $P_{\theta}^*$) to be the parabolic subgroup of~$G$ with Lie algebra $\mathfrak{g}_0 \oplus \bigoplus_{\alpha\in\Sigma^+} \mathfrak{g}_{\alpha} \oplus \bigoplus_{\alpha\in\Sigma^+\cap\mathrm{span}(\Delta\smallsetminus\theta)} \mathfrak{g}_{-\alpha}$ (\resp $\mathfrak{g}_0 \oplus \bigoplus_{\alpha\in\Sigma^+} \mathfrak{g}_{-\alpha} \oplus \bigoplus_{\alpha\in\Sigma^+\cap\mathrm{span}(\Delta\smallsetminus\theta)} \mathfrak{g}_{\alpha}$).
Then $P_{\emptyset} = G$ and $P_{\Delta}$ is a minimal parabolic subgroup of~$G$.

For any word hyperbolic group~$\Gamma$ and any nonempty $\theta\subset\Delta$, there is a notion of \emph{$P_{\theta}$-Anosov representation} $\rho : \Gamma\to G$, with continuous $\rho$-equivariant boundary maps $\xi : \partial\Gamma\to G/P_{\theta}$ and $\xi^* : \partial\Gamma\to G/P_{\theta}^*$ satisfying a transversality condition and~a~uni\-form contraction/expansion condition: see \eg \cite{GGKW} where the same notation~is~used.

\begin{ex}
For $G=\GL(d,\RR)$ we can take $K=\mathrm{O}(d)$ and
$$\mathfrak{a}^+ = \{ \mathrm{diag}(t_1,\dots,t_d) \ : \ t_1\geq\dots\geq t_d\} \ \subset\ \mathfrak{a} = \{ \mathrm{diag}(t_1,\dots,t_d) \ : \ t_1,\dots t_d\in\RR\} \simeq \RR^d.$$
With this choice the Cartan projection $\mu$ (\resp the Lyapunov projection $\lambda$) identifies with the map $\mu=(\mu_1,\dots,\mu_d)$ (\resp $\lambda=(\lambda_1,\dots,\lambda_d)$) of Section~\ref{subsec:limit-cones}, giving the list of logarithms of singular values (\resp of moduli of eigenvalues) of a matrix.
We have $\Sigma = \{\varepsilon_i-\varepsilon_j \,:\, 1\leq i\neq j\leq d\}$ and $\Delta = \{\varepsilon_i-\varepsilon_{i+1} \,:\, 1\leq i\leq d-1\}$.
The parabolic subgroup $P_i:=P_{\{\varepsilon_i-\varepsilon_{i+1}\}}$ is the stabilizer of the $i$-plane $\mathrm{span}(e_1,\dots,e_i)$ of~$\RR^d$.
\end{ex}

We shall use the following fact.

\begin{fact}[{see \cite[Lem.\,3.2 \& 3.7, Prop.\,3.5]{GGKW}}] \label{fact:from-G-to-GL}
For any nonempty $\theta\subset\Delta$, there exist $d\in\NN^*$ and a representation $\tau : G\to\GL(d,\RR)$ with the following properties:
\begin{itemize}
  \item $\min_{\alpha \in \theta}\, \langle\alpha,\mu(g)\rangle = (\mu_1-\mu_2)(\tau(g))$ for all $g\in G$,
  \item $\min_{\alpha \in \theta}\, \langle\alpha,\lambda(g)\rangle = (\lambda_1-\lambda_2)(\tau(g))$ for all $g\in G$,
  \item a representation $\rho : \Gamma\to G$ is $P_{\theta}$-Anosov if and only if $\tau\circ\rho : \Gamma\to\GL(d,\RR)$ is $P_1$-Anosov.
\end{itemize}
\end{fact}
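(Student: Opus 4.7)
The plan is to build $\tau$ as a tensor product of carefully chosen irreducible proximal representations of~$G$, one factor for each $\alpha \in \theta$. First, for each simple root $\alpha \in \Delta$, I would construct an irreducible representation $\Lambda_\alpha : G \to \GL(V_\alpha)$ whose highest $\aaa$-weight $\chi_\alpha$ has multiplicity one, whose stabilizer in~$G$ of the highest weight line in $\PP(V_\alpha)$ is precisely $P_{\{\alpha\}}$, and whose second highest $\aaa$-weight (in the ordering induced by~$\aaa^+$) is exactly $\chi_\alpha - \alpha$. Such a $\Lambda_\alpha$ is obtained from the fundamental representation of the complexified semisimple part of~$G$ attached to the fundamental coweight dual to~$\alpha$, restricted to~$G$ (after passing to a sufficiently divisible positive multiple so that $\chi_\alpha - \alpha$ is indeed the second $\aaa$-weight and the representation is defined over~$\RR$); the detailed construction is carried out in \cite{gw12,GGKW}. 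With such a $\Lambda_\alpha$ in hand, the Cartan decomposition $g = k\exp(\mu(g)) k'$ together with the fact that singular values of $\Lambda_\alpha(\exp\mu(g))$ are the $e^{\langle \chi, \mu(g)\rangle}$ as $\chi$ ranges over the $\aaa$-weights shows that $(\mu_1-\mu_2)(\Lambda_\alpha(g)) = \langle\alpha,\mu(g)\rangle$ for every $g\in G$, and applying the same computation to the hyperbolic part of the Jordan decomposition yields $(\lambda_1-\lambda_2)(\Lambda_\alpha(g)) = \langle\alpha,\lambda(g)\rangle$.

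Next I would set $\tau := \bigotimes_{\alpha \in \theta} \Lambda_\alpha$ and invoke the elementary observation that for matrices in tensor product form the ordered sequence of singular values is obtained by listing all products of singular values of the factors. Since the top singular value of each $\Lambda_\alpha(g)$ has multiplicity one, the second largest product is obtained by replacing the top singular value by the second one in exactly one factor, chosen so as to minimize the loss. This gives
\[ (\mu_1 - \mu_2)(\tau(g)) \;=\; \min_{\alpha \in \theta}\, (\mu_1 - \mu_2)(\Lambda_\alpha(g)) \;=\; \min_{\alpha \in \theta}\,\langle\alpha,\mu(g)\rangle, \]
and the same identity with $\lambda$ in place of $\mu$ follows by applying the argument to the hyperbolic Jordan part. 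This establishes the first two items.

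For the third item, combining the first item with Fact~\ref{fact:Ano-sg-gap} and with the characterization of $P_\theta$-Anosov representations of word hyperbolic groups by the uniform linear lower bound $\min_{\alpha\in\theta}\langle\alpha,\mu(\rho(\gamma))\rangle \geq c\,|\gamma|_F - c'$ (as established in \cite{KLP,GGKW}), the equivalence ``$\rho$ is $P_\theta$-Anosov'' $\Leftrightarrow$ ``$\tau\circ\rho$ is $P_1$-Anosov'' reduces to the tautology that both conditions express the same linear growth of $(\mu_1-\mu_2)(\tau\circ\rho(\gamma))$. The main obstacle in all of this is the very first step: arranging \emph{simultaneously} that $\Lambda_\alpha$ is irreducible, that its highest $\aaa$-weight has multiplicity one with stabilizer equal to $P_{\{\alpha\}}$, and that its second $\aaa$-weight is exactly $\chi_\alpha - \alpha$, requires a careful analysis of the restricted root system of the real reductive group~$G$ and is the reason one must pass through positive integer multiples of fundamental weights and verify that no accidental weight coincidences occur.
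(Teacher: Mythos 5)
Your proposal is correct and follows essentially the same route as the source the paper cites for this statement (the paper itself gives no proof of Fact~\ref{fact:from-G-to-GL}, quoting it from \cite{GGKW}): one takes the Tits proximal representations $\Lambda_\alpha$ with $(\mu_1-\mu_2)(\Lambda_\alpha(g))=\langle\alpha,\mu(g)\rangle$ and $(\lambda_1-\lambda_2)(\Lambda_\alpha(g))=\langle\alpha,\lambda(g)\rangle$ as in \cite[Lem.\,3.2 \& 3.7]{GGKW}, and packages them into a single $\tau$ by a tensor product, whose singular values (resp.\ eigenvalue moduli) are exactly the products of those of the factors. The only cosmetic difference is that \cite[Prop.\,3.5]{GGKW} states the Anosov equivalence separately for each $\Lambda_\alpha\circ\rho$ rather than for the tensor product, and your reduction of the third bullet to the singular-value gap characterizations (Fact~\ref{fact:Ano-sg-gap} and its analogue for general $G$ from \cite{KLP,BPS}) correctly bridges that step.
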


\subsection{Gaps in the Lyapunov projection for representations of finitely generated groups}

Let $\theta$ be a nonempty subset of~$\Delta$ and $\Gamma$ a group with a finite generating subset~$F$.
Generalizing Definition~\ref{def:gaps}, we shall say that a representation $\rho : \Gamma\to G$ has a \emph{uniform $\theta$-gap in the Cartan projection} (\resp a \emph{weak uniform $\theta$-gap in the Lyapunov projection}, \resp a \emph{strong uniform $\theta$-gap in the Lyapunov projection}) if there exist $c,c'>0$ such that for any $\alpha\in\theta$ and any $\gamma\in\Gamma$, we have $\langle\alpha,\mu(\rho(\gamma))\rangle \geq c \, |\gamma|_F - c'$ (\resp $\langle\alpha,\lambda(\rho(\gamma))\rangle \geq c \, |\gamma|_{F,\infty} - c'$, \resp $\langle\alpha,\lambda(\rho(\gamma))\rangle \geq c \, \ell_F(\gamma) - c'$).
These notions do not depend on the choice of finite generating set~$F$, by Remark~\ref{rem:change-gen-set}.

Using Fact~\ref{fact:from-G-to-GL}, we see that the following hold similarly to Fact~\ref{fact:Ano-sg-gap}, Remark~\ref{rem:gaps}.\eqref{item:strong-weak-gap-hyp}, Proposition~\ref{prop:eig-sing-gap-group}, and Corollary~\ref{cor:char-Ano-eigenv}.

\begin{fact}[{\cite{KLP,BPS}}]
A representation $\rho : \Gamma\to G$ has a uniform $\theta$-gap in the Cartan projection if and only if $\Gamma$ is word hyperbolic and $\rho$ is $P_{\theta}$-Anosov.
\end{fact}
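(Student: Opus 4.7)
The plan is to reduce the statement to the $\GL(d,\RR)$ case (Fact~\ref{fact:Ano-sg-gap}) by invoking Fact~\ref{fact:from-G-to-GL}, which provides a linear representation $\tau : G\to\GL(d,\RR)$ (depending on~$\theta$) translating $\theta$-gaps in the Cartan projection into a $1$-gap in singular values, and $P_\theta$-Anosov into $P_1$-Anosov.

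First I would handle the forward direction. Suppose $\rho : \Gamma\to G$ has a uniform $\theta$-gap in the Cartan projection, so there exist $c,c'>0$ with $\langle\alpha,\mu(\rho(\gamma))\rangle\geq c\,|\gamma|_F - c'$ for all $\alpha\in\theta$ and $\gamma\in\Gamma$. Taking the minimum over $\alpha\in\theta$ and using the first bullet of Fact~\ref{fact:from-G-to-GL}, this reads
\[
(\mu_1-\mu_2)\big(\tau(\rho(\gamma))\big) \;=\; \min_{\alpha\in\theta}\,\langle\alpha,\mu(\rho(\gamma))\rangle \;\geq\; c\,|\gamma|_F - c'
\qquad\forall\gamma\in\Gamma,
\]
so $\tau\circ\rho : \Gamma\to\GL(d,\RR)$ has a uniform $1$-gap in singular values. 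Fact~\ref{fact:Ano-sg-gap} then forces $\Gamma$ to be word hyperbolic and $\tau\circ\rho$ to be $P_1$-Anosov, and the third bullet of Fact~\ref{fact:from-G-to-GL} upgrades this to $\rho$ being $P_\theta$-Anosov.

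For the converse, assume $\Gamma$ is word hyperbolic and $\rho$ is $P_\theta$-Anosov. The third bullet of Fact~\ref{fact:from-G-to-GL} gives that $\tau\circ\rho$ is $P_1$-Anosov, and Fact~\ref{fact:Ano-sg-gap} then yields constants $c,c'>0$ with $(\mu_1-\mu_2)(\tau(\rho(\gamma)))\geq c\,|\gamma|_F - c'$ for all $\gamma\in\Gamma$. Invoking the first bullet of Fact~\ref{fact:from-G-to-GL} once more gives
\[
\min_{\alpha\in\theta}\,\langle\alpha,\mu(\rho(\gamma))\rangle \;\geq\; c\,|\gamma|_F - c'
\qquad\forall\gamma\in\Gamma,
\]
which is precisely a uniform $\theta$-gap in the Cartan projection.

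There is no genuine obstacle: all of the content has been absorbed into Fact~\ref{fact:from-G-to-GL} (which is why the paper presents the statement as a fact rather than a theorem). The only point to keep in mind is that the representation~$\tau$ depends on the chosen~$\theta$, so the same $\tau$ is used on both sides of the equivalence; and since a uniform $\theta$-gap means a simultaneous gap for every $\alpha\in\theta$, taking the minimum over $\alpha\in\theta$ is harmless in both directions.
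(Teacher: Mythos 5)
Your proof is correct and follows exactly the route the paper intends: the paper states this fact without writing out a proof, saying only that it follows ``using Fact~\ref{fact:from-G-to-GL} \ldots similarly to Fact~\ref{fact:Ano-sg-gap}'', which is precisely your reduction via $\tau$ to the $\GL(d,\RR)$ case with $i=1$. Both directions are handled correctly, including the observation that the uniform $\theta$-gap (a simultaneous gap over all $\alpha\in\theta$) is equivalent to the corresponding bound on $\min_{\alpha\in\theta}\langle\alpha,\mu(\rho(\gamma))\rangle$.
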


\begin{remark}
If $\Gamma$ is word hyperbolic, then having a weak uniform $\theta$-gap is equivalent to having a strong uniform $\theta$-gap in the Lyapunov projection.
\end{remark}

\begin{prop}
Suppose $\Gamma$ is word hyperbolic.
A representation $\rho : \Gamma\to G$ has a uniform $\theta$-gap in the Lyapunov projection if and only if it has a uniform $\theta$-gap in the Cartan projection.
\end{prop}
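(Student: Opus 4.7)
The plan is to reduce the assertion for $G$ to its already-established counterpart for $\GL(d,\RR)$, namely Proposition~\ref{prop:eig-sing-gap-group}, by composing $\rho$ with the representation $\tau : G \to \GL(d,\RR)$ provided by Fact~\ref{fact:from-G-to-GL}.

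First I would observe that, since $\theta \subset \Delta$ is finite, the condition that $\rho$ have a uniform $\theta$-gap in the Cartan projection is equivalent to the existence of $c,c'>0$ such that
$$\min_{\alpha \in \theta} \langle \alpha, \mu(\rho(\gamma)) \rangle \,\geq\, c\,|\gamma|_F - c' \qquad \forall\,\gamma \in \Gamma,$$
and analogously in the Lyapunov case with $|\gamma|_{F,\infty}$ in place of $|\gamma|_F$. By the first two identities in Fact~\ref{fact:from-G-to-GL}, these minima equal $(\mu_1 - \mu_2)(\tau\circ\rho(\gamma))$ and $(\lambda_1 - \lambda_2)(\tau\circ\rho(\gamma))$ respectively. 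Hence $\rho$ has a uniform $\theta$-gap in the Cartan (\resp Lyapunov) projection if and only if $\tau \circ \rho : \Gamma \to \GL(d,\RR)$ has a uniform $1$-gap in singular values (\resp a weak uniform $1$-gap in eigenvalues, which under the word hyperbolicity of~$\Gamma$ coincides with the strong uniform version by Remark~\ref{rem:gaps}.\eqref{item:strong-weak-gap-hyp}).

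I would then invoke Proposition~\ref{prop:eig-sing-gap-group} applied to the linear representation $\tau\circ\rho$ with $i=1$: since $\Gamma$ is word hyperbolic, it yields the equivalence between having a uniform $1$-gap in singular values and having a uniform $1$-gap in eigenvalues for $\tau\circ\rho$. Translating this equivalence back through the two identifications above gives the desired statement for $\rho$ itself.

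No substantial obstacle is expected, as the real content has already been absorbed into Fact~\ref{fact:from-G-to-GL} and Proposition~\ref{prop:eig-sing-gap-group}; the argument is purely a dictionary between the $G$-side and the $\GL(d,\RR)$-side, mediated by~$\tau$. The only mild bookkeeping point is the equivalence between the ``for all $\alpha \in \theta$'' formulation of the gap condition and the ``$\min_{\alpha \in \theta}$'' formulation, which is trivial since $\theta$ is finite and changes the constants only by factors depending on $|\theta|$.
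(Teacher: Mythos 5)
Your proposal is correct and is exactly the argument the paper intends: the paper itself only states that the proposition "holds similarly to Proposition~\ref{prop:eig-sing-gap-group}" using Fact~\ref{fact:from-G-to-GL}, i.e., the reduction via $\tau$ that you spell out. Your write-up simply makes explicit the dictionary the authors leave implicit.
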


\begin{cor}
Suppose $\Gamma$ is word hyperbolic.
A representation $\rho : \Gamma\to G$ is $P_{\theta}$-Anosov if and only if it has a uniform $\theta$-gap in the Lyapunov projection.
\end{cor}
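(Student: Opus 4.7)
The plan is to derive this corollary by a direct composition of the two results immediately preceding it in Section~\ref{sec:general-reductive}, namely the proposition asserting the equivalence between a uniform $\theta$-gap in the Cartan projection and a uniform $\theta$-gap in the Lyapunov projection, and the fact (from \cite{KLP,BPS}) characterizing $P_{\theta}$-Anosov representations of word hyperbolic groups as those with a uniform $\theta$-gap in the Cartan projection. Given these two statements, there is essentially nothing left to do: the corollary is obtained by chaining the two equivalences.

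More concretely, first I would note that since $\Gamma$ is assumed word hyperbolic, the fact says that $\rho$ being $P_{\theta}$-Anosov is equivalent to $\rho$ having a uniform $\theta$-gap in the Cartan projection. Second, I would invoke the preceding proposition to exchange ``Cartan projection'' for ``Lyapunov projection'' in this characterization; the hypothesis that $\Gamma$ is word hyperbolic is used there as well (so that, by Remark~\ref{rem:gaps}.\eqref{item:strong-weak-gap-hyp} and its analogue in the remark just above, weak and strong uniform $\theta$-gaps in the Lyapunov projection coincide, and there is no ambiguity in the statement).

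Since the statement is genuinely a corollary, no new obstacle arises. If one preferred a proof not explicitly routed through the Cartan-projection characterization, one could alternatively apply Fact~\ref{fact:from-G-to-GL}: choosing the representation $\tau : G \to \GL(d,\RR)$ provided there, the uniform $\theta$-gap in $\lambda$ for $\rho$ becomes a uniform $1$-gap in eigenvalues for $\tau\circ\rho$, which by Corollary~\ref{cor:char-Ano-eigenv} is equivalent to $\tau\circ\rho$ being $P_1$-Anosov, which by Fact~\ref{fact:from-G-to-GL} is equivalent to $\rho$ being $P_{\theta}$-Anosov. Either route is short; I would present the first as it is the more natural conclusion of the section.
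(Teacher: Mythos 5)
Your proposal is correct and matches the paper's (implicit) argument exactly: the corollary is obtained by chaining the preceding fact (Anosov $\Leftrightarrow$ uniform $\theta$-gap in the Cartan projection, for word hyperbolic $\Gamma$) with the preceding proposition (Cartan gap $\Leftrightarrow$ Lyapunov gap), precisely as Corollary~\ref{cor:char-Ano-eigenv} is derived from Fact~\ref{fact:Ano-sg-gap} and Proposition~\ref{prop:eig-sing-gap-group} in the $\GL(d,\RR)$ case. Your alternative route through Fact~\ref{fact:from-G-to-GL} is also valid and is in fact how the paper frames the whole block of statements in Section~\ref{sec:general-reductive}.
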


\subsection{Anosov representations for semigroups}

Let $\semig$ be a finitely generated semigroup and $\theta$ a nonempty subset of~$\Delta$.
Similarly to Definition~\ref{def:Anosov-semig}, we propose the following.

\begin{defi}
A semigroup homomorphism $\rho : \semig\to G$ is \emph{$P_{\theta}$-Anosov} if it has a uniform $\theta$-gap in the Cartan projection.
\end{defi}

For $g\in G$ with $\langle\alpha,\mu(g)\rangle>0$ for all $\alpha\in\theta$, we set $\Xi_{\theta}(g):=kP_{\theta}\in G/P_{\theta}$ where $k\in K$ satisfies $g\in k\exp(\mu(g))K$; this is well-defined.
Let $F$ be a finite generating subset of~$\semig$.
As in Section~\ref{subsec:semig-dom-splitting}, to any semigroup homomorphism $\rho : \semig\to G$ we associate a locally constant cocycle $(\sigma,\Phi_{\rho})$ over $\cQ\cG_F$ given by $\Phi_{\rho}((f_k)_{k\in\NN}):=\rho(f_0)^{-1}\in G$.
Recall properties (D) and~(U) from Definitions \ref{def:property-D} and~\ref{def:property-U}.
Using Fact~\ref{fact:from-G-to-GL}, we see that the following holds similarly to Proposition~\ref{prop:Ano-sv-gap-semig} and Lemma~\ref{lem:semig-Ano-bound-map}, Corollary~\ref{cor:Ano-open-semig}, Proposition~\ref{prop:eig-gap-semig}, and Proposition~\ref{prop:main-Rees-semig}.

\begin{prop}
If $\rho : \semig\to G$ is $P_{\theta}$-Anosov, then there is a continuous $(\sigma,\Phi_{\rho})$-equivariant map $E^{cs} : \cQ\cG_F\to G/P_{\theta}$ given by 
$$E^{cs}\big((f_k)_{k\in\NN}\big) = \lim_{n\to +\infty} \Xi_{\theta}\big(\rho(f_0 \cdots f_{n-1})\big),$$
which induces a continuous $\rho$-equivariant map $\xi : \partial\semig\to G/P_{\theta}$, with the property that for any $\gamma\in\semig$ with $|\gamma|_{\infty}>0$, the image by $\xi$ of the fixed point $\eta_{\gamma}^+\in\partial\semig$ of~$\gamma$ of Lemma~\ref{lem:eta-gamma+} is an attracting fixed point of $\rho(\gamma)$ in $G/P_{\theta}$.
\end{prop}

\begin{prop}
If $\semig$ has property~(D), then the space of $P_{\theta}$-Anosov semigroup homomorphisms is an open subset of $\Hom(\semig,G)$.
\end{prop}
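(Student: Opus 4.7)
The plan is to reduce the statement to the already-established case $G=\GL(d,\RR)$, namely Corollary~\ref{cor:Ano-open-semig}, by means of the representation $\tau$ supplied by Fact~\ref{fact:from-G-to-GL}. First I would fix the nonempty subset $\theta\subset\Delta$ and apply Fact~\ref{fact:from-G-to-GL} to produce an integer $d\in\NN^*$ and a representation $\tau : G\to\GL(d,\RR)$ satisfying the pointwise identity $\min_{\alpha\in\theta}\langle\alpha,\mu(g)\rangle = (\mu_1-\mu_2)(\tau(g))$ for every $g\in G$. Although Fact~\ref{fact:from-G-to-GL} is stated in terms of $P_\theta$-Anosov representations of hyperbolic groups, only this pointwise identity on Cartan projections will be needed here, and it applies to any semigroup homomorphism.

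Next I would argue, exactly as in the group case earlier in this section, that a semigroup homomorphism $\rho : \semig\to G$ has a uniform $\theta$-gap in the Cartan projection if and only if $\tau\circ\rho : \semig\to\GL(d,\RR)$ has a uniform $1$-gap in singular values: indeed, applying the identity above to $\rho(\gamma)$ for every $\gamma\in\semig$ yields
\[
\min_{\alpha\in\theta}\langle\alpha,\mu(\rho(\gamma))\rangle \;=\; (\mu_1-\mu_2)(\tau\circ\rho(\gamma)),
\]
and the word length $|\gamma|_F$ is unchanged, so the two uniform gap conditions are equivalent constant-for-constant. Thus $\rho$ is $P_\theta$-Anosov (in the semigroup sense) if and only if $\tau\circ\rho$ is $P_1$-Anosov (in the semigroup sense).

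Finally I would combine this with continuity of post-composition. The assignment
\[
\tau_* : \Hom(\semig,G) \longrightarrow \Hom(\semig,\GL(d,\RR)), \qquad \rho \longmapsto \tau\circ\rho,
\]
is continuous for the topology of uniform convergence on the finite generating set $F$, since $\tau$ is continuous and $F$ is finite. By the previous paragraph, the $P_\theta$-Anosov locus in $\Hom(\semig,G)$ is exactly $\tau_*^{-1}$ of the $P_1$-Anosov locus in $\Hom(\semig,\GL(d,\RR))$. Since $\semig$ has property~(D) by hypothesis, Corollary~\ref{cor:Ano-open-semig} ensures that the latter locus is open, and the preimage under $\tau_*$ is therefore open as well, proving the claim.

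No substantive obstacle is anticipated; the only point requiring a brief verification is that property~(D) is intrinsic to $\semig$ (it involves only the Cayley graph and the sets $\cQ\cG_F^{\kappa,\kappa'}$), so it transfers automatically to the hypothesis of Corollary~\ref{cor:Ano-open-semig} regardless of the target group, making the above pullback argument legitimate.
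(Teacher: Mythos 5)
Your proposal is correct and follows essentially the same route the paper intends: the paper explicitly states that this proposition holds "using Fact~\ref{fact:from-G-to-GL} \dots similarly to \dots Corollary~\ref{cor:Ano-open-semig}", i.e.\ precisely the reduction via the identity $\min_{\alpha\in\theta}\langle\alpha,\mu(g)\rangle=(\mu_1-\mu_2)(\tau(g))$ that you carry out, combined with continuity of $\rho\mapsto\tau\circ\rho$. Your additional observation that property~(D) is intrinsic to $\semig$ and independent of the target is the right sanity check and completes the argument.
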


\begin{prop}
If $\rho : \semig\to G$ is $P_{\theta}$-Anosov, then it has a uniform $\theta$-gap in the Jordan projection.
The converse holds as soon as $\semig$ has property~(U) and the Zariski closure of $\rho(\semig)$ in~$G$ is reductive, or as soon as $\semig$ has properties (D) and~(U).
\end{prop}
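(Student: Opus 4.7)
The plan is to reduce everything to the $\GL(d,\RR)$-valued case already treated in Proposition~\ref{prop:eig-gap-semig}, using the representation $\tau : G\to\GL(d,\RR)$ provided by Fact~\ref{fact:from-G-to-GL}. Although that fact is stated in the context of representations of word hyperbolic groups, its first two bullet points are pointwise identities valid for any $g\in G$, so they transfer verbatim to the semigroup setting.

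Concretely, for any semigroup homomorphism $\rho : \semig\to G$ and any $\gamma\in\semig$, Fact~\ref{fact:from-G-to-GL} gives
$$\min_{\alpha\in\theta}\langle\alpha,\mu(\rho(\gamma))\rangle = (\mu_1-\mu_2)(\tau\circ\rho(\gamma)) \quad\text{and}\quad \min_{\alpha\in\theta}\langle\alpha,\lambda(\rho(\gamma))\rangle = (\lambda_1-\lambda_2)(\tau\circ\rho(\gamma)).$$
Since the word length $|\gamma|_F$ and the stable length $|\gamma|_{F,\infty}$ only depend on the abstract semigroup~$\semig$ and not on the target, this immediately yields two equivalences: $\rho$ is $P_\theta$-Anosov if and only if $\tau\circ\rho : \semig\to\GL(d,\RR)$ is $P_1$-Anosov in the sense of Definition~\ref{def:Anosov-semig}; and $\rho$ has a uniform $\theta$-gap in the Jordan projection if and only if $\tau\circ\rho$ has a uniform $1$-gap in eigenvalues.

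The forward direction is then immediate: if $\rho$ is $P_\theta$-Anosov, then $\tau\circ\rho$ is $P_1$-Anosov, hence by the first bullet of Proposition~\ref{prop:eig-gap-semig} it has a uniform $1$-gap in eigenvalues, and translating back gives the uniform $\theta$-gap in the Jordan projection for~$\rho$. For the converse, suppose $\rho$ has a uniform $\theta$-gap in the Jordan projection, so that $\tau\circ\rho$ has a uniform $1$-gap in eigenvalues. Under the hypothesis that $\semig$ has both properties (D) and (U), the second bullet of Proposition~\ref{prop:eig-gap-semig} directly gives that $\tau\circ\rho$ is $P_1$-Anosov, and hence $\rho$ is $P_\theta$-Anosov.

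The only step that requires a small verification is the alternative hypothesis in the converse, namely when $\semig$ has property~(U) and the Zariski closure $H$ of $\rho(\semig)$ in $G$ is reductive: I would need to check that the Zariski closure of $\tau\circ\rho(\semig)$ in $\GL(d,\RR)$ is reductive in order to apply Proposition~\ref{prop:eig-gap-semig}. This is the mildly delicate point, but it is standard: $\tau$ is a morphism of linear algebraic groups, so $\tau(H)$ is a linear algebraic subgroup of $\GL(d,\RR)$ containing $\tau\circ\rho(\semig)$ as a Zariski-dense subsemigroup, and as a quotient of the reductive group~$H$ by an algebraic normal subgroup, $\tau(H)$ is itself reductive. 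Once this is noted, the second bullet of Proposition~\ref{prop:eig-gap-semig} applies to $\tau\circ\rho$ and concludes the proof.
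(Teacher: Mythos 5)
Your proposal is correct and follows exactly the route the paper intends: the paper derives this proposition by applying Fact~\ref{fact:from-G-to-GL} to reduce to Proposition~\ref{prop:eig-gap-semig} for $\tau\circ\rho$, just as you do. Your verification that the Zariski closure of $\tau\circ\rho(\semig)$ is reductive (as the image of the reductive group $H$ under the algebraic morphism~$\tau$) is the one detail the paper leaves implicit, and your argument for it is sound.
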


\begin{prop}
Suppose the semigroup $\semig$ is completely simple.
If $\semig$ admits an Anosov representation, then the Cayley graph of~$\semig$ is Gromov hyperbolic, with boundary $\partial\semig$, and any infinite-order element $\gamma\in\semig$ has a unique attracting fixed point $\eta_{\gamma}^+$ in $\partial\semig$.
In this case, for any semigroup homomorphism $\rho : \semig\to G$, the following are equivalent:
\begin{enumerate}
  \item $\rho$ is $P_{\theta}$-Anosov;
  \item $\rho$ is $(P_{\Delta\smallsetminus\theta})$-Anosov;
  \item there exist continuous $\rho$-equivariant dynamics-preserving boundary maps\linebreak $\xi : \partial\semig\to G/P_{\theta}$ and $\xi' : \partial\semig\to G/P_{\theta}^*$ which are compatible and transverse, and $\langle\alpha,\mu(\rho(\gamma_n))\rangle \to +\infty$ for any $\alpha\in\theta$ and any sequence $(\gamma_n)_{n\in\NN}$ of pairwise distinct elements of~$\semig$;
  \item $\rho$ has a uniform $\theta$-gap in the Lyapunov projection.
\end{enumerate}
Moreover, $P_{\theta}$-Anosov representations form an open subset of $\Hom(\semig,G)$.
\end{prop}

Here the notions of compatibility and transversality are defined as follows.
View $G/P_{\theta}$ (\resp $G/P_{\theta}^*$) as the set of parabolic subgroups of~$G$ conjugate to $P_{\theta}$ (\resp $P_{\theta}^*$).
We say that $\xi$ and~$\xi'$ are \emph{compatible} if for any $\eta\in\partial\semig$ the intersection of $\xi(\eta)\in G/P_{\theta}$ and $\xi'(\eta)\in G/P_{\theta}^*$ is a parabolic subgroup of~$G$, and that $\xi$ and~$\xi'$ are \emph{transverse} if for any $\eta\neq\eta'$ in $\partial\semig$ the intersection of $\xi(\eta)\in G/P_{\theta}$ and $\xi'(\eta')\in G/P_{\theta}^*$ is a reductive subgroup of~$G$.

\vspace{1cm}
{\small {\bf Acknowledgements:}
We thank Konstantinos Tsouvalas for pointing out the simplified proof of Proposition~\ref{prop:eig-sing-gap-group} which we reproduce here.
R.P. is particularly grateful to Jairo Bochi for many enlightening discussions; he also thanks Le\'on Carvajales and Andr\'es Sambarino for several discussions.
We thank the anonymous referees, Sami Douba and Ilia Smilga for useful comments and suggestions.
This work was initiated during a visit of R.P. at the Laboratoire de Math\'ematiques d'Orsay in the spring 2017; the hospitality of Orsay and of the Institut des Hautes \'Etudes Scientifiques is gratefully acknowledged.
Part of this work was completed during the workshop \emph{Dynamics Beyond Uniform Hyperbolicity} in Provo, UT, in June 2017; we are grateful to the scientific committee and organizers of the workshop for inviting us.}



\begin{thebibliography}{GGKW}

\bibitem[AMS]{AMS}
\textsc{H. Abels, G. A. Margulis, G. A. Soifer}, \textit{Semigroups containing proximal linear maps}, Israel J. Math.~91 (1995), p.~1--30.

\bibitem[AB]{Avila-Bochi-Trieste} 
\textsc{A. Avila, J. Bochi}, \textit{Lyapunov exponents}, Trieste Lecture notes (2008), available at \texttt{http://www.mat.uc.cl/}$\sim$\texttt{jairo.bochi/docs/trieste.pdf}

\bibitem[ABY]{ABY}
\textsc{A. Avila, J. Bochi, J-C. Yoccoz}, \textit{Uniformly hyperbolic finite-valued ${\rm SL}(2,\Bbb R)$-cocycles.} Comment. Math. Helv.~85 (2010), p.~813--884. 

\bibitem[Be]{Benoist97}
\textsc{Y. Benoist}, \textit{Propri\'et\'es asymptotiques des groupes lin\'eaires}, Geom. Funct. Anal.~7 (1997), p.~1--47.

\bibitem[Be$_2$]{Benoist-notes}
\textsc{Y. Benoist}, \textit{Sous-groupes discrets des groupes de Lie}, notes from the European Summer School in Group Theory, Luminy, 1997, available at \texttt{https://www.math.u-psud.fr/~benoist/prepubli/0097luminy.pdf}

\bibitem[BQ]{BenoistQuint-livre}
\textsc{Y. Benoist, J. F. Quint}, \textit{Random walks on reductive groups}, Ergebnisse der Mathematik und ihrer Grenzgebiete, 3. Folge, vol.~62, Springer, Cham, 2016.

\bibitem[Bo]{Bochi-ICM}
\textsc{J. Bochi}, \textit{Ergodic optimization of Birkhoff averages and Lyapunov exponents}, in \emph{Proceedings\,of\,the International Congress of Mathematicians 2018 (ICM 2018)}, p.~1821--1842, World Scientific,~2019.

\bibitem[Bo$_2$]{Bochi-ICMtalk}
\textsc{J. Bochi}, \textit{Ergodic optimization of Birkhoff averages and Lyapunov exponents}, slides of a talk at the ICM 2018, available at \texttt{http://www.mat.uc.cl/} $\sim$\texttt{jairo.bochi/docs/optimization\_LE\_talk\_ICM.pdf}

\bibitem[BGa]{BochiGaribaldi} 
\textsc{J. Bochi, E. Garibaldi}, \textit{Extremal norms for fiber bunched cocycles}, J. \'Ec. polytech. Math.~6 (2019), p.~947--1004.	

\bibitem[BGo]{BG}
\textsc{J. Bochi, N. Gourmelon}, \textit{Some characterizations of domination}, Math. Z.~263 (2009), p.~221--231.

\bibitem[BM]{BochiMorris}
\textsc{J. Bochi, I. Morris}, \textit{Equilibrium states of generalised singular value potentials and applications to affine iterated function systems}, Geom. Funct. Anal.~28 (2018), p.~995--1028.

\bibitem[BPS]{BPS}
\textsc{J. Bochi, R. Potrie, A. Sambarino}, \textit{Anosov representations and dominated splittings}, J. Eur. Math. Soc.~21 (2019), p.~3343--3414.

\bibitem[BR]{BR}
\textsc{J. Bochi, M. Rams}, \textit{The entropy of Lyapunov-optimizing measures of some matrix cocycles}, J. Mod. Dyn.~10 (2016), p.~255--286.

\bibitem[BT]{BorelTits}
\textsc{A. Borel, J. Tits}, \textit{Groupes r\'eductifs}, Publ. Math. Inst. Hautes \'Etudes Sci.~27 (1965), p.~55--151.

\bibitem[BS]{BS}
\textsc{E. Breuillard, C. Sert}, \textit{The joint spectrum}, J. Lond. Math. Soc.~103 (2021), p.~943--990.

\bibitem[BCLS]{BCLS}
\textsc{M. Bridgeman, R. D. Canary, F. Labourie, A. Sambarino}, \textit{The pressure metric for Anosov representations}, Geom. Funct. Anal.~25 (2015), p.~1089--1179.

\bibitem[BH]{bh99}
\textsc{M. R. Bridson, A. Haefliger}, \textit{Metric spaces of non-positive curvature}, Grundlehren der mathematischen Wissenschaften, vol.~319, Springer-Verlag, Berlin, 1999.

\bibitem[BE]{be15}
\textsc{J. Burillo, M. Elder}, \textit{Metric properties of Baumslag--Solitar groups}, Internat. J. Algebra Comput.~25 (2015), p.~799--811.

\bibitem[Bu]{Butler}
\textsc{C. Butler}, \textit{Characterizing symmetric spaces by their Lyapunov spectra}, arXiv:1709.08066v4.

\bibitem[CDP]{cdp90}
\textsc{M. Coornaert, T. Delzant, A. Papadopoulos}, \textit{G\'eom\'etrie et th\'eorie des groupes. Les groupes hyperboliques de Gromov}, Lecture Notes in Mathematics, vol.~1441, Springer-Verlag, Berlin, 1990.

\bibitem[CP]{CP}
\textsc{S. Crovisier, R. Potrie} \textit{Introduction to partial hyperbolicity}, Trieste Lecture notes (2015), available at \texttt{http://www.cmat.edu.uy/}$\sim$\texttt{rpotrie/documentos/pdfs/} \texttt{Crovisier-Potrie-PH.pdf}

\bibitem[DGK]{DGK}
\textsc{J. Danciger, F. Gu\'eritaud, F. Kassel}, \textit{Convex cocompact actions in real projective geometry}, arXiv:1704.08711.

\bibitem[DGLM]{DGLM}
\textsc{M. Delzant, O. Guichard, F. Labourie, S. Mozes}, \textit{Displacing representations and orbit maps}, p.~494--514, in \textit{Geometry, rigidity, and group actions}, University of Chicago Press, 2011.

\bibitem[DG]{dg04}
\textsc{A. J. Duncan, R. H. Gilman}, \textit{Word hyperbolic semigroups}, Math. Proc. Camb. Phil. Soc.~136 (2004), p.~513--524.

\bibitem[EF]{ef97}
\textsc{D. B. A. Epstein, K. Fujiwara}, \textit{The second bounded cohomology of word-hyperbolic groups}, Topology~36 (1997), p.~1275--1289.

\bibitem[F]{Feng}
\textsc{D. J. Feng}, \textit{Equilibrium states for factor maps between subshifts}, Adv. Math.~226 (2011), p.~2470--2502.

\bibitem[FK]{fk04}
\textsc{J. Fountain, M. Kambites}, \textit{Hyperbolic groups and completely simple semigroups}, in \emph{Semigroups and languages}, Proceedings of a 2002 workshop in Lisbon, World Scientific, 2004.

\bibitem[Gi]{gil02}
\textsc{R. H. Gilman}, \textit{On the definition of word hyperbolic groups}, Math. Z.~242 (2002), p.~529--541.

\bibitem[Go]{Gogolev}
\textsc{A. Gogolev}, \textit{Diffeomorphisms H\"{o}lder conjugate to Anosov}, Ergodic Theory Dynam. Systems~30 (2010), p.~441--456.

\bibitem[GGKW]{GGKW}
\textsc{F. Gu\'eritaud, O. Guichard, F. Kassel, A. Wienhard}, \textit{Anosov representations and proper actions}, Geom. Topol.~21 (2017), p.~485--584. 

\bibitem[GW]{gw12}
\textsc{O. Guichard, A. Wienhard}, \textit{Anosov representations : Domains of discontinuity and applications}, Invent. Math.~190 (2012), p.~357--438.

\bibitem[He]{Herman}
\textsc{M. Herman}, \textit{Une méthode pour minorer les exposants de Lyapounov et quelques exemples montrant le caract\`ere local d'un th\'eor\`eme d'Arnold et de Moser sur le tore de dimension~2}, Comment. Math. Helv.~58 (1983), p.~453--502.

\bibitem[Ho]{how95}
\textsc{J. M. Howie}, \textit{Fundamentals of semi-group theory}, London Mathematical Society Monographs, New Series, vol.~12, Oxford Science Publications, the Clarendon Press, Oxford University Press, New York, 1995.

\bibitem[Kal]{Kalinin}
\textsc{B. Kalinin}, \textit{Livsic theorem for matrix cocycles}, Ann. of Math.~173 (2011), p.~1025--1042.

\bibitem[KB]{KB}
\textsc{I. Kapovich, N. Benakli}, \textit{Boundaries of hyperbolic groups}, p.~39--94, in \textit{Combinatorial and geometric group theory}, Contemporary Mathematics, vol.~296, American Mathematical Society, Providence, RI, 2002.

\bibitem[KLP]{klp-survey}
\textsc{M. Kapovich, B. Leeb, J. Porti}, \textit{Some recent results on Anosov representations}, Transform. Groups~21 (2016), p.~1105--1121.

\bibitem[KLP$_2$]{KLP}
\textsc{M. Kapovich, B. Leeb, J. Porti}, \textit{A Morse lemma for quasigeodesics in symmetric spaces and euclidean buildings}, Geom. Topol.~22 (2018), p.~3827--3923.

\bibitem[Kas]{kas08}
\textsc{F. Kassel}, \textit{Proper actions on corank-one reductive homogeneous spaces}, J. Lie Theory~18 (2008), p.~961--978.

\bibitem[Kas$_2$]{Kassel-ICM}
\textsc{F. Kassel}, \textit{Geometric structures and representations of discrete groups}, in \emph{Proceedings of the International Congress of Mathematicians 2018 (ICM 2018)}, p.~1113--1150, World Scientific, 2019.

\bibitem[KH]{KH} 
\textsc{A. Katok, B. Hasselblatt} \textit{Introduction to the modern theory of dynamical systems}, Cambridge University Press, Cambridge, 1995.

\bibitem[Kn]{Knapp}
\textsc{A. Knapp}, \textit{Lie groups beyond an introduction}, second edition, Progress in Mathematics, vol.~140, Birkh\"auser Boston Inc., Boston, MA, 2002.

\bibitem[L]{lab06}
\textsc{F. Labourie}, \textit{Anosov flows, surface groups and curves in projective space}, Invent. Math.~165 (2006), p.~51--114.

\bibitem[LM]{LindMarcus}
\textsc{D. Lind, B. Marcus}, \textit{An introduction to symbolic dynamics and coding}, Cambridge University Press, Cambridge, 1995.

\bibitem[M]{mos56}
\textsc{G. D. Mostow}, \textit{Fully reducible subgroups of algebraic groups}, Amer. J. Math.~78 (1956), p.~200--221.

\bibitem[O]{osi10}
\textsc{D. Osin}, \textit{Small cancellations over relatively hyperbolic groups and embedding theorems}, Ann. of Math.~172 (2010), p.~1--39.

\bibitem[Pa]{Park}
\textsc{K. Park}, \textit{Quasi-multiplicativity of typical cocycles},  Comm. Math. Phys.~376 (2020), p.~1957--2004.

\bibitem[Po]{Potrie-ICM}
\textsc{R. Potrie}, \textit{Robust dynamics, invariant structures and topological classification}, in \emph{Proceedings of the International Congress of Mathematicians 2018 (ICM 2018)}, p.~2057--2080, World Scientific,~2019.

\bibitem[Ve]{Velozo}
\textsc{R. Velozo Ruiz}, \textit{Characterization of uniform hyperbolicity for fibre-bunched cocycles}, Dyn. Syst.~35 (2020), p.~124--139.

\bibitem[Vi]{Viana}
\textsc{M. Viana}, \textit{Lectures on Lyapunov exponents}, Cambridge Studies in Advanced Mathematics, vol.~145, Cambridge University Press, Cambridge, 2014.

\bibitem[We]{Weiss}
\textsc{B. Weiss}, \textit{Subshifts of finite type and sofic systems}, Monatsh. Math.~77 (1973), p 462--474.

\bibitem[Wi]{wie-icm}
\textsc{A. Wienhard}, \textit{An invitation to higher Teichm\"uller theory}, in \emph{Proceedings of the International Congress of Mathematicians 2018 (ICM 2018)}, p.~1007--1034, World Scientific, 2019.

\bibitem[Y]{Yoccoz}
\textsc{J-C. Yoccoz}, \textit{Some questions and remarks on $\SL(2,\RR)$ cocycles}, in \textit{Modern dynamical systems and applications}, p.~447--458, Cambridge University Press, Cambridge, 2004. 

\bibitem[Z]{zim}
\textsc{A. Zimmer}, \textit{Projective Anosov representations, convex cocompact actions, and rigidity}, J. Differential Geom.~119 (2021), p.~513--586.

\end{thebibliography}
\end{document}